\def\calB{{\mathcal{B}}}
\def\calD{{\mathcal{D}}}
\def\calE{{\mathcal{E}}}
\def\calF{{\mathcal{F}}}
\def\calG{{\mathcal{G}}}
\def\calN{{\mathcal{N}}}
\def\calO{{\mathcal{O}}}
\def\calP{{\mathcal{P}}}
\def\sP{{\mathbb{P}}}
\def\sR{{\mathbb{R}}}
\newcommand{\WD}{\mathcal{W}}
\newcommand{\AWD}{\mathcal{AW}}
\newcommand{\AWDsigma}{\mathcal{AW}^{(\sigma)}}
\newcommand{\TVD}{\mathrm{TV}}
\newcommand{\AVD}{\mathrm{AV}}
\newcommand{\cpl}{\text{Cpl}}
\newcommand{\bccpl}{\text{Cpl}_{\text{bc}}}
\newcommand{\E}{\mathbb{E}}
\newcommand{\R}{\mathbb{R}}
\newcommand{\N}{\mathbb{N}}
\newcommand{\Var}{\mathrm{Var}}
\newcommand{\defeq}{\vcentcolon=}
\newcommand{\cmark}{\ding{51}}%
\newcommand{\xmark}{\ding{55}}%
\newtheorem{theorem}{Theorem}[section]
\newtheorem{lemma}[theorem]{Lemma}
\theoremstyle{remark}
\newtheorem{remark}[theorem]{Remark}
\theoremstyle{definition}
\newtheorem{definition}[theorem]{Definition}
\newtheorem{example}[theorem]{Example}
\title{Convergence of Adapted Smoothed Empirical Measures}
\author{Songyan Hou\thanks{Department of Mathematics, ETH Z\"{u}rich, Switzerland.~~~\emph{songyan.hou@math.ethz.ch}\newline The author would like to thank Beatrice Acciaio for insightful discussions and valuable advice.} 
 }
\date{\today }
\begin{document}

\maketitle

\begin{abstract}
The \textit{adapted Wasserstein distance} ($\mathcal{AW}$-distance) controls the calibration errors of optimal values in various stochastic optimization problems, pricing and hedging problems, optimal stopping problems, etc. However, statistical aspects of the $\mathcal{AW}$-distance are bottlenecked by the failure of \textit{empirical measures} (\textit{Emp}) to converge under this distance. Kernel smoothing and adapted projection have been introduced to construct converging substitutes of empirical measures, known respectively as \textit{smoothed empirical measures} ($\mathsf{S}$-\textit{Emp}) and \textit{adapted empirical measures} ($\mathsf{A}$-\textit{Emp}). However, both approaches have limitations. Specifically, $\mathsf{S}$-\textit{Emp} lack comprehensive convergence results, whereas $\mathsf{A}$-\textit{Emp} in practical applications lead to fewer distinct samples compared to standard empirical measures.

In this work, we address both of the aforementioned issues. First, we develop comprehensive convergence results of $\mathsf{S}$-\textit{Emp}. We then introduce a smoothed version for $\mathsf{A}$-\textit{Emp}, which provide as many
distinct samples as desired. We refer them as $\mathsf{AS}$-\textit{Emp} and establish their convergence in mean, deviation and almost sure convergence. The convergence estimation incorporates two results: the empirical analysis of the \textit{smoothed adapted Wasserstein distance} ($\mathcal{AW}^{(\sigma)}$-distance) and its bandwidth effects. Both results are novel and their proof techniques could be of independent interest.

\noindent\emph{Keywords:} adapted Wasserstein distance, empirical measure, convergence rate, kernel smoothing\\
MSC (2020): 60B10, 62G30, 49Q22
\end{abstract}

\section{Introduction}
\label{sec:intro}
The development of the \textit{adapted Wasserstein distance} ($\AWD$-distance) is motivated by its robustness in stochastic optimization problems within a dynamic framework, as discussed in \cite{pflug2014multistage}. In stochastic finance, optimal values of various important problems, including pricing and hedging problems, optimal stopping problems, etc., are not continuous with respect to the \textit{Wasserstein distance} ($\WD$-distance). Specifically, two stochastic models can be arbitrarily close to each other under the $\WD$-distance, yet their corresponding optimal values in the aforementioned optimization problems differ significantly. However, when two models are close under the $\AWD$-distance, their optimal values also remain close. In fact, optimal values are Lipschitz continuous with respect to the $\AWD$-distance \cite{backhoff2020adapted}. This implies that the $\AWD$-distance is strong enough to guarantee the robustness of path-dependent problems. Meanwhile, the topology induced by the $\AWD$-distance is already the coarsest topology which makes optimal stopping values continuous \cite{BackhoffVeraguas2019AllAT}. Therefore, the $\AWD$-distance seems to be the appropriate metric when considering stochastic optimization problems under general probability distributions. For further details, please refer to \cite{backhoff2020adapted,backhoff2017causal,lassalle2018causal,ruschendorf1985wasserstein, BackhoffVeraguas2019AllAT, Pammer2022ANO}.

Motivated by the robustness of the $\AWD$-distance, we study the convergence of \textit{empirical measures} (\textit{Emp}) under this metric. Surprisingly, the $\AWD$-distance is so stringent that even empirical measures fail to converge to the underlying measure. To address this issue, two alternatives of empirical measures have been proposed. The first is \textit{smoothed empirical measures} ($\mathsf{S}$-\textit{Emp}), introduced by Pflug-Pichler in \cite{pflug2016empirical}, which convolute empirical measures with smooth kernels. The second is \textit{adapted empirical measures} ($\mathsf{A}$-\textit{Emp}), introduced by Backhoff et al. in \cite{backhoff2022estimating}, by projecting empirical measures onto a grid. However, each of these methods has limitations, either in terms of theoretical convergence results or practical applications.

For $\mathsf{S}$-\textit{Emp}, Pflug-Pichler \cite{pflug2016empirical} established convergence only in probability, under strong assumptions on underlying measures and kernels. Specifically, underlying measures must be compactly supported, with sufficiently regular densities that are bounded away from zero, and have uniform Lipschitz conditional distributions. Additionally, the smoothing kernels must be non-negative, compactly supported, and satisfy uniform consistency conditions. Pflug-Pichler’s proof relies on applying the Nadaraya-Watson estimator recursively to estimate kernel densities. This approach poses significant challenges for establishing convergence in terms of mean and deviation, and almost sure convergence, due to constraints associated with recursive density estimation. As a result, convergence in terms of mean and deviation, and almost sure convergence have largely remained an open problem.

For $\mathsf{A}$-\textit{Emp}, Backhoff et al. \cite{backhoff2022estimating} established convergence in terms of mean and deviation, and almost sure convergence, assuming compactly supported underlying measures. This result was subsequently generalized to unbounded measures on $\R^{dT}$ by \cite{acciaio2022convergence}. However, a practical challenge with $\mathsf{A}$-\textit{Emp} is the issue of ``sample-collapsing", where $\mathsf{A}$-\textit{Emp} yield fewer distinct samples compared to empirical measures after projecting samples onto the grid. To address this problem, it is a common practice to add independent noise to samples before the adapted projection. This heuristic, referred to as \textit{adapted smoothed empirical measures} ($\mathsf{AS}$-\textit{Emp}), can generate as many distinct samples as needed. However, convergence properties of $\mathsf{AS}$-\textit{Emp} have not yet been studied.

The aim of this paper is to conduct a thorough convergence study of both $\mathsf{S}$-\textit{Emp} and $\mathsf{AS}$-\textit{Emp}. In summary, our contributions are as follows:
\begin{itemize}
    \item We establish convergence of $\mathsf{S}$-\textit{Emp} in terms of mean and deviation, and almost sure convergence of $\mathsf{S}$-\textit{Emp}, under notably milder assumptions (\cref{thm:smooth_emp}). Our proof is based on two key results on the \textit{smoothed adapted Wasserstein distance} ($\AWDsigma$-distance): the empirical study of the $\AWDsigma$-distance (\cref{thm:smooth_adw_mean_deviation} and \cref{thm:smooth_adw_as}) and its bandwidth effect (\cref{thm:bw_lip_limit} and \cref{thm:bw_limit}). Both results are significant, and are by themselves worth further investigation.

    \item We formally introduce $\mathsf{AS}$-\textit{Emp}, which is the first variant of empirical measures that is (i) discretely supported as a sum of Dirac measures, (ii) producing distinct samples as many as wanted, and (iii) converging under the $\AWD$-distance. The convergence in terms of mean and deviation and almost sure convergence are established (\cref{thm:ad_smooth_emp_mean_deviation}).

    \item We bound the $\AWD$-distance by the weighted total variation distance for unbounded measures (\cref{thm:metric_dom_new}), which is a key technique used in the main proof.
\end{itemize}

\paragraph{Related Literature.}

Convergence of empirical measures plays a pivotal role in empirical analysis. In the literature, much effort has been devoted to the analysis of empirical measures under $\WD$-distance, see e.g. \cite{bolley2007quantitative,dedecker2015deviation,fournier2015rate,gozlan2007large,lei2020convergence,boissard2011simple,dereich2013constructive,boissard2014mean}. The moment convergence rates are proved in \cite{boissard2014mean} based on iterative trees, and in \cite{dereich2013constructive} based on a so-called Pierce-type estimate. Later, Fournier et al. prove sharp convergence rates in \cite{fournier2015rate}.

To overcome the curse of dimensionality (CoD) rates, recent work has proposed smoothing empirical measures to obtain dimension-free rates under the $\WD$-distance, see e.g. \cite{goldfeld2024limit, chen2021asymptotics, goldfeld2020asymptotic, nietert2021smooth, goldfeld2020gaussian, goldfeld2020convergence}. Motivated by this, in this work we study empirical convergence under the $\AWDsigma$-distance and analogously prove dimension-free rates. \cite{blanchet2024empirical} introduce the so-called \textit{smoothed martingale projection distance} and obtain dimension-free convergence rates under it. Recently, the $\AWDsigma$-distance has been applied in \cite{blanchet2024bounding} to bound the $\AWD$-distance with the $\WD$-distance. Notably, Eckstein–Pammer in \cite{eckstein2024computational} bound the $\AWD$-distance with the total variation distance in compact spaces. However, this is not true for general unbounded measures. To amend this, we introduce the weight total variation distance and use it to bound the $\AWD$-distance in the case of unbounded measures.

The $\AWD$-distance is first investigated in \cite{pflug2012distance, pflug2014multistage} and the convergence of empirical measures under $\AWD$-distance is studied by Pflug-Pichler in \cite{pflug2016empirical}. They notice that empirical measures fail to converge under $\AWD$-distance, so they introduce $\mathsf{S}$-\textit{Emp} and prove convergence. Later, Backhoff et al. in \cite{backhoff2022estimating} introduce $\mathsf{A}$-\textit{Emp} on compact spaces and prove convergence. Recently, Acciaio-Hou extend convergence results in \cite{backhoff2022estimating} to general measures in \cite{acciaio2022convergence}.\\ 

\paragraph{Organization of the paper.}
In \cref{sec:intro}, we give a brief introduction of the problem and elaborate our contributions. Then, in \cref{sec:setteing}, we introduce the setting and state our main results. In \cref{sec:bounding}, we prove domination inequality between the weighted total variation distance and the adapted Wasserstein distance. In \cref{sec:smooth_distance}, we introduce smooth distances and prove the convergence of empirical measures under various smooth distances. In \cref{sec:bw_effect}, we analyze the bandwidth effect. In \cref{sec:smooth_emp}, we prove the convergence of smoothed empirical measures. In \cref{sec:smooth_and_ad_emp_meas}, we prove the convergence of adapted smoothed empirical measures. Finally, in \cref{sec:appendix}, we collect some technical results and needed tools.\\

\paragraph{Notations.} 
Throughout the paper, we let $d \geq 1$ be the dimension of the state space and $T \geq 1$ be the time horizon. Let $\mathcal{P}(\R^{dT})$ be the space of canonical Borel probability measures on $\R^{dT}$, and let $\mu, \nu \in \mathcal{P}(\R^{dT})$. We consider finite discrete-time paths $x = x_{1:T} = (x_1,\dots,x_T) \in \R^{dT}$, where $x_{t} \in \R^{d}$ represents the value of the path at time $t = 1,\dots, T$. For $t=1,...,T$, we denote by $x_{1:t}=(x_1,...,x_t) \in \R^{dt}$ and equip $\R^{dt}$ with a sum-norm $\Vert \cdot \Vert \colon \R^{dt} \to \R$ defined by $\Vert x \Vert = \sum_{s = 1}^{t} \Vert x_{s} \Vert_{\R^{d}}$. 

For $\mu \in \mathcal{P}(\R^{dT})$, we denote the up to time $t$ marginal of $\mu$ by $\mu_{1:t}$, and the kernel (disintegration) of $\mu$ w.r.t. $x_{1:t}$ by $\mu_{x_{1:t}}$, so the following holds: $\mu(dx_{t+1}) = \int_{\R^{dt}}\mu_{x_{1:t}}(dx_{t+1})\mu_{1:t}(dx_{1:t})$. Similarly, we denote the up to time $t$ marginal of $\pi\in\cpl(\mu,\nu)$ by $\pi_{1:t}$, and the kernel of $\pi$ w.r.t. $(x_{1:t}, y_{1:t})$ by $\pi_{x_{1:t}, y_{1:t}}$, so that $\pi(dx_{t+1},dy_{t+1}) = \int_{\R^{dt} \times \R^{dt}}\pi_{x_{1:t},y_{1:t}}(dx_{t+1},dy_{t+1})\pi_{1:t}(dx_{1:t},dy_{1:t})$. For simplicity, we denote by $\mu_1 = \mu_{1:1}$ and $\pi_1 = \pi_{1:1}$.

For $p\geq 1$, we denote by $M_{p}(\mu) = \int \Vert x\Vert^{p}\mu(dx)$ the $p$-th moment of $\mu$ and denote by $\mathcal{P}_{p}(\R^{dT})$ probability measures on $\R^{dT}$ with finite $p$-th moments. For $\alpha,\gamma > 0$, we denote by $\mathcal{E}_{\alpha,\gamma}(\mu) = \int \exp\left(\gamma\Vert x\Vert^{\alpha}\right)\mu(dx)$ the $(\alpha,\gamma)$-exponential moment of $\mu$. We let $(X^{(n)})_{n\in\N}$ be i.i.d. samples from $\mu$ defined on some probability space $(\Omega, \mathcal{F},\sP)$. For all $k\in \N$ and $\sigma > 0$, we denote by $\calN_{\sigma, k} = \calN(\mathbf{0},\sigma^2  \mathbf{I}_{k})$ the Gaussian distribution and by $\varphi_{\sigma, k}$ its density function. In particualr when $k=dT$, for all $\sigma > 0$, we denote by $\calN_{\sigma} = \calN(\mathbf{0},\sigma^2  \mathbf{I}_{dT})$ the Gaussian distribution and by $\varphi_{\sigma}$ its density function. For all $\mu \in \calP(\R^{dT})$, we call the convolution measure of $\mu$ and $\calN_\sigma$ the Gaussian-smoothed measure of $\mu$ such that $(\mu \ast \calN_{\sigma})(dx) = \int_{\R^{dT}}\varphi_{\sigma}(x-y)\mu(dy)dx,$
and we denote $\mu \ast \calN_{\sigma}$ by $\mu_\sigma$.

\section{Main results}\label{sec:setteing}
\begin{definition}[Wasserstein distance]
    For $\mu,\nu\in\calP_1(\R^{dT})$, the first order \textit{Wasserstein distance} $\WD_1(\cdot,\cdot)$ on $\mathcal{P}_1(\R^{dT})$ is defined by 
    \begin{equation*}
        \WD_1(\mu,\nu) = \inf_{\pi \in \cpl(\mu,\nu)}\int \| x - y\| \,\pi(dx,dy),
    \end{equation*}
    where $\cpl(\mu,\nu)$ denotes the set of couplings between $\mu$ and $\nu$, that is, probabilities in $\mathcal{P}(\R^{dT}\times \R^{dT})$ with first marginal $\mu$ and second marginal $\nu$. 
\end{definition}

\begin{definition}[Weighted total variation distance]
    For $\mu,\nu\in\calP_1(\R^{dT})$, the first order \textit{weighed total variation distance} $\TVD_1(\cdot,\cdot)$ on $\mathcal{P}(\R^{dT})$ is defined by 
    \begin{equation*}
        \TVD_{1}(\mu,\nu) = \int (\Vert x \Vert + \frac{1}{2}) |\mu - \nu|(dx) = \inf_{\pi \in \cpl(\mu,\nu)}\int (\Vert x \Vert + \Vert y \Vert + 1)\mathbbm{1}_{\{x\neq y\}}\pi(dx,dy),
    \end{equation*}
    where $|\mu - \nu| = \mu + \nu - 2(\mu \wedge \nu)$ {\color{black}and $\mu\wedge\nu = \mu - (\mu - \nu)^+ = \nu - (\nu - \mu)^+$}.
\end{definition}
 The definition of weighted total variation distance $\TVD_1(\mu,\nu)$ is motivated by the primal formulation of classical total variation distance $\TVD(\mu,\nu) = \inf_{\pi\in\cpl(\mu,\nu)}\int \mathbbm{1}_{\{x\neq y\}}\pi(dx,dy)$, augmented with an additional cost $\Vert x \Vert + \Vert y \Vert$. This modification ensures that $\TVD_1(\mu,\nu)$ serves as an upper bound for $\WD_1(\mu,\nu)$ for all $\mu,\nu \in \calP_1(\R^{dT})$.
Next, we restrict our attention to couplings $\pi \in \cpl(\mu,\nu)$ such that the conditional law of $\pi$ is still a coupling of the conditional laws of $\mu$ and $\nu$, that is, $\pi_{x_{1:t},y_{1:t}} \in \cpl\big(\mu_{x_{1:t}}, \nu_{y_{1:t}}\big)$. Such couplings are called
bi-causal, and denoted by $\bccpl(\mu,\nu)$.
The causality constraint can be expressed in different equivalent ways; see e.g. \cite{backhoff2017causal,acciaio2020causal,bremaud1978changes}. Roughly, in a causal transport, for every time $t$, only information on the $x$-coordinate up to time $t$ is used to determine the mass transported to the $y$-coordinate at time $t$.
And in a bi-causal transport this holds in both directions, i.e. also when exchanging the role of $x$ and $y$.

\begin{definition}[Adapted Wasserstein distance]
     For $\mu,\nu\in\calP_1(\R^{dT})$, the first order \textit{adapted Wasserstein distance} $\AWD_1(\cdot,\cdot)$ on $\mathcal{P}(\R^{dT})$ is defined by 
    \begin{equation}
    \label{eq:adaptedwd} 
        \AWD_1(\mu,\nu) = \inf_{\pi \in \bccpl(\mu,\nu)}\int\|x-y\|\,\pi(dx,dy).
    \end{equation}
\end{definition}

Pflug-Pichler refer to the adapted Wasserstein distance as nested distance in \cite{pflug2014multistage}, with an alternative representation through a dynamic programming principle by disintegrating \eqref{eq:adaptedwd} and replacing conditional laws with $\pi_{x_{1:t},y_{1:t}} \in \cpl\big(\mu_{x_{1:t}}, \nu_{y_{1:t}}\big)$. For notational simplicity, we state it here only for the case $t = 1$, where one obtains the representation 
\begin{equation}
\label{eq:dpp}
    \AWD_1(\mu,\nu) = \inf_{\pi_1 \in \cpl(\mu_1,\nu_1)}\int\Vert x_1 - y_1\Vert_{\R^{d}} + \AWD_1(\mu_{x_1},\nu_{y_1})\pi_1(dx_1,dy_1).
\end{equation}
This reflects clearly that $\AWD$ considers not only marginal laws but also the difference between conditional laws. The example below explicitly shows the gap between Wasserstein distance and adapted Wasserstein distance, when conditional laws mismatch. Additionally, when regarding $\mu$ and $\nu$ as distributions of risky assets, it clearly illustrates the inappropriateness of the Wasserstein distance to gauge closeness of financial markets, and the way in which its adapted counterpart amends to it. 
\begin{example}
\label{ex:classical_example}
    Let $\mu, \nu \in \mathcal{P}([0,1]^2)$ be given by $\mu = \frac{1}{2}\delta_{(0,1)} + \frac{1}{2}\delta_{(0,-1)}$ and $\nu = \frac{1}{2}\delta_{(\epsilon,1)} + \frac{1}{2}\delta_{(-\epsilon,-1)}$, with $\epsilon \in (0, 1)$.\footnote{ We denote by $\delta_x$ the Dirac measure concentrated at $x \in \R^{dT}$ for all $d,T \in \N$.} On one hand, we have $\WD_1(\mu,\nu) = \epsilon$ by optimally coupling $(0,1)$ with $(\epsilon, 1)$ and $(0,-1)$ with $(-\epsilon, 1)$. On the other hand, since $\AWD_1(\mu_{x_1},\nu_{y_1}) = \WD_1(\mu_{x_1},\nu_{y_1}) = 1$ for all $x_1,y_1\in\R$, thus by \eqref{eq:dpp}, we get
    \[
    \AWD_1(\mu,\nu) = \inf_{\pi_1 \in \cpl(\mu_1,\nu_1)}\int \Vert x_1 - y_1\Vert \pi_1(dx_1, dy_1) + 1 = 1 + \epsilon.
    \]
    Therefore, by letting $\epsilon \to 0$, we get $\lim_{\epsilon \to 0}\frac{\AWD_{1}(\mu,\nu)}{\WD_1(\mu,\nu)} = \infty$, which indicates that the topology induced by $\AWD_1(\cdot, \cdot)$ is stricter than the weak topology induced by $\WD_1(\cdot, \cdot)$.
\end{example}   
 In the above example, let us consider a financial market with an asset whose law is described by $\mu$, and another market with an asset whose law is described by $\nu$. Then under the Wasserstein distance the two markets are judged as being close to each other, while they clearly present very different features (random versus deterministic evolution, no-arbitrage versus arbitrage, etc.). It is also evident how optimization problems in the two situations would lead to very different decision making. This is a standard example to motivate the introduction of adapted distances, that instead can distinguish between the two models.  

\subsection{Smoothed empirical measures}
In this subsection, we present convergence of smoothed empirical measures.

\begin{definition}[Empirical measures]
\label{def:emp_mea}
    For $\mu \in \calP_1(\R^{dT})$ and $N\in\N$, we denote by $\mu^{N} \coloneqq \frac{1}{N}\sum_{n=1}^{N}\delta_{X^{(n)}}$ the \textit{empirical measures} of $\mu$, where $(X^{(n)})_{n\in\N}$ are i.i.d. samples from $\mu$.
\end{definition}

\begin{definition}[Smoothed empirical measures]
\label{def:smooth_emp_mea}
    For $\mu \in \calP_1(\R^{dT})$, $N\in\N$ and $\sigma >0$, we call the convoluted measures of empirical measures $\mu^{N}$ and $\calN_{\sigma}$ \textit{smoothed empirical measures} ($\mathsf{S}$-\textit{Emp}) of $\mu$, denoted by $\mu^{N} \ast \calN_{\sigma}$.
\end{definition}
On one hand, with the bandwidth $\sigma$ fixed, $\mu^N \ast \calN_{\sigma}$ converges to $\mu \ast \calN_\sigma$ under $\AWD$-distance in terms of mean, deviation and converge almost surely; see \cref{sec:smooth_distance_awd}. On the other hand, as the bandwidth $\sigma$ goes to $0$, $\AWD_1(\mu, \mu \ast \mathcal{N}_{\sigma})$ converge linearly w.r.t. $\sigma$ if $\mu$ has Lipschitz kernels; see Section~\ref{sec:bw_effect_lip}. Combining both, we establish convergence of $\mu^N \ast \calN_{\sigma_N}$ to $\mu$; see \cref{sec:smooth_emp} for proofs.
\begin{definition}
    Let $L>0$. We say that $\mu\in\mathcal{P}_1(\R^{dT})$ has $L$-Lipschitz kernels if it admits a disintegration s.t. for all $t=1,\dots,T-1$, $x_{1:t} \mapsto \mu_{x_{1:t}}$ is $L$-Lipschitz (where $\calP(\R^{d})$ is equipped with $\WD_1$).
\end{definition}
\begin{theorem}\label{thm:smooth_emp}
Let $\mu \in \calP_1(\R^{dT})$, $\sigma_N = N^{-r}$ for all $N\in\N$ where $r=(dT+2)^{-1}$. Then
\begin{equation*}
    \lim_{N \to \infty}\AWD_{1}(\mu , \mu^N \ast \calN_{\sigma_N} )  = 0, \quad \sP\text{-a.s.}
\end{equation*}
In addition, assume $K \subseteq \R^{dT}$ compact, $L>0$ and $\mu \in \calP(K)$ with $L$-Lipschitz kernels. Then there exist $c, C>0$ depending only on $d, T, L, K$ s.t. for all $x > 0$ and $N \in \N$,
\begin{equation}
\label{eq:thm:smooth_emp:0.1}
    \E\big[\AWD_{1}(\mu , \mu^N \ast \calN_{\sigma_N} ) \big] \leq C N^{-r},
\end{equation}
\begin{equation}
\label{eq:thm:smooth_emp:0.2}
    \sP\Big(\AWD_{1}(\mu , \mu^N \ast \calN_{\sigma_N} ) \geq x + C N^{-r} \Big) \leq e^{-\frac{x^2 N^{1-r} }{c}}.
\end{equation}
\end{theorem}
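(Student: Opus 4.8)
The plan is to split, using the triangle inequality for $\AWD_1$,
\begin{equation*}
\AWD_1(\mu,\mu^N\ast\calN_{\sigma_N})\;\leq\;\underbrace{\AWD_1(\mu,\mu\ast\calN_{\sigma_N})}_{A_N}\;+\;\underbrace{\AWD_1(\mu\ast\calN_{\sigma_N},\,\mu^N\ast\calN_{\sigma_N})}_{B_N},
\end{equation*}
so that $A_N$ is a deterministic \emph{bandwidth error} and $B_N$ is the \emph{empirical error at bandwidth $\sigma_N$}. The crux is that the two terms compete in $\sigma_N$: $A_N$ decreases the faster $\sigma_N\downarrow 0$, while a typical value of $B_N$ scales like $\sigma_N^{-dT/2}N^{-1/2}$ and blows up as $\sigma_N\downarrow 0$; the choice $\sigma_N=N^{-r}$ with $r=(dT+2)^{-1}$ equalizes the two, and all the exponent bookkeeping collapses to the identity $\tfrac{dT}{2(dT+2)}-\tfrac12=-\tfrac{1}{dT+2}=-r$.

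For the almost sure statement (general $\mu\in\calP_1(\R^{dT})$): since $\sigma_N\to 0$, \cref{thm:bw_limit} gives $A_N\to 0$. For $B_N$ I would apply \cref{thm:smooth_adw_as} in the regime $\sigma_N=N^{-r}$; if that result is available only for fixed bandwidth, one first proves $B_N\to 0$ $\sP$-a.s. for \emph{compactly supported} $\mu$ — using only the mean and deviation control of $B_N$ derived below (no Lipschitz assumption is needed here, since both arguments of $B_N$ are convolved with the \emph{same} Gaussian) together with a Borel--Cantelli argument along $N$ — and then removes compactness by truncation: replace $\mu$ by its normalized restriction to a centered ball of radius $R_N\uparrow\infty$ chosen slowly, bound the discrepancy between the two smoothed empirical measures via the $\TVD_1$-domination \cref{thm:metric_dom_new} and the strong law of large numbers for the tail mass $\mu(\{\|x\|>R_N\})$, and let $R_N\to\infty$. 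Combining the two limits gives $\AWD_1(\mu,\mu^N\ast\calN_{\sigma_N})\to 0$ $\sP$-a.s.

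For the quantitative bounds under the extra hypotheses ($K$ compact, $\mu\in\calP(K)$ with $L$-Lipschitz kernels): \cref{thm:bw_lip_limit} gives $A_N\leq C'\sigma_N=C'N^{-r}$, which is the only place the Lipschitz assumption enters. For $B_N$ I would invoke \cref{thm:smooth_adw_mean_deviation}, which for compactly supported $\mu$ and $\sigma\leq 1$ should yield $\E[B_N]\leq C_K\,\sigma^{-dT/2}N^{-1/2}$ together with a sub-Gaussian deviation bound; the $\sigma^{-dT/2}$ factor — the reason the dimension reappears despite the dimension-free $N^{-1/2}$ rate at a fixed bandwidth — comes from reducing $B_N$, via $\AWD_1\leq C\cdot\TVD_1$ (\cref{thm:metric_dom_new}), to the weighted $L^1$ error of the kernel density estimate $x\mapsto\frac1N\sum_{n=1}^N\varphi_{\sigma_N}(x-X^{(n)})$, whose pointwise standard deviation is at most $\sigma_N^{-dT/2}N^{-1/2}$ times $\sqrt{(\varphi_{\sigma_N}\ast\mu)(x)}$, while $\int(\|x\|+\tfrac12)\sqrt{(\varphi_{\sigma}\ast\mu)(x)}\,dx$ stays bounded uniformly in $\sigma\leq 1$ because $K$ is compact. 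Setting $\sigma=\sigma_N=N^{-r}$ and using $\tfrac{dT}{2(dT+2)}-\tfrac12=-r$ turns this into $\E[B_N]\leq C_KN^{-r}$, hence $\E[\AWD_1(\mu,\mu^N\ast\calN_{\sigma_N})]\leq(C'+C_K)N^{-r}$, which is \eqref{eq:thm:smooth_emp:0.1}. For \eqref{eq:thm:smooth_emp:0.2}, since $A_N$ is deterministic it suffices to concentrate $B_N$ about its mean: the deviation part of \cref{thm:smooth_adw_mean_deviation} at $\sigma=\sigma_N$ — equivalently, McDiarmid's bounded-differences inequality applied to $(X^{(1)},\dots,X^{(N)})\mapsto B_N$, whose per-coordinate influence is $O(1/N)$ \emph{uniformly} in $\sigma_N\leq 1$ by \cref{thm:metric_dom_new} and compactness of $K$ (it gives in fact $e^{-cNx^2}$, a fortiori the stated form) — yields $\sP(B_N\geq\E[B_N]+x)\leq e^{-x^2N^{1-r}/c_0}$; choosing $C$ large enough to absorb $C'$ and $\sup_N N^{r}\,\E[B_N]$ so that $\{\AWD_1(\mu,\mu^N\ast\calN_{\sigma_N})\geq x+CN^{-r}\}\subseteq\{B_N\geq\E[B_N]+x\}$ gives \eqref{eq:thm:smooth_emp:0.2}.

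The main difficulty I anticipate is twofold. First, obtaining the empirical estimate with \emph{exactly} the factor $\sigma^{-dT/2}$ and an essentially linear (rather than H\"older-degraded) dependence on $\TVD_1$: any loss there shifts the critical exponent off $r=(dT+2)^{-1}$ and destroys the matching of the two rates, so the sharp forms of \cref{thm:metric_dom_new} (\cref{sec:bounding}) and of the empirical analysis (\cref{sec:smooth_distance}) are carrying the real weight. Second, the almost sure statement for non-compact $\mu$ forces one to run the truncation and the vanishing bandwidth simultaneously, and $\AWD_1$ does not restrict to a ball as transparently as $\WD_1$; keeping the tail contribution summable for Borel--Cantelli requires a careful, slow growth rate for $R_N$ in tandem with the $\TVD_1$-domination.
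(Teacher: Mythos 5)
Your decomposition and exponent bookkeeping match the paper's proof exactly: split into the bandwidth term $A_N=\AWD_1(\mu,\mu\ast\calN_{\sigma_N})$ and the empirical term $B_N=\AWD_1^{(\sigma_N)}(\mu,\mu^N)$, bound $A_N\lesssim\sigma_N$ via \cref{thm:bw_lip_limit}, bound $\E[B_N]\lesssim\sigma_N^{-dT/2}N^{-1/2}$ via \cref{thm:smooth_adw_mean_deviation}, and equalize with $\sigma_N=N^{-r}$, $r=(dT+2)^{-1}$; the placement of the Lipschitz hypothesis on $A_N$ but not $B_N$ is also exactly the paper's.

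Your concentration step is genuinely different and in fact sharper than the paper's. You note that the per-coordinate influence of a single sample on $B_N$ is $O(1/N)$ \emph{uniformly} in $\sigma\leq1$: replacing $X^{(i)}$ by $\tilde X^{(i)}$ moves the smoothed empirical density by $\frac1N\big(\varphi_\sigma(\cdot-X^{(i)})-\varphi_\sigma(\cdot-\tilde X^{(i)})\big)$, whose $(\|x\|+\tfrac12)$-weighted $L^1$-norm is bounded on a compact $K$ uniformly in $\sigma\leq1$; combined with the metric triangle inequality for $\AWD_1$ and \cref{thm:metric_dom_new} (with $\alpha_K$ from \cref{lem:uni_linear_cond_moment_compact}, which is uniform in $\sigma\leq1$), this gives a uniform $O(1/N)$ bounded difference. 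The paper instead (proof of \cref{thm:smooth_tv_deviation}) controls the influence by $\Lip(f\ast\varphi_\sigma)\cdot\|x_i-x_i'\|$; since $\Lip(f\ast\varphi_\sigma)$ degrades like $\sigma^{-1}$ for general $|f|\leq\|\cdot\|+\tfrac12$, this only yields $e^{-cN^{1-2r}x^2}$ after substituting $\sigma_N=N^{-r}$. Your direct bound gives $e^{-cNx^2}$, which is a strict improvement and makes the stated exponent $N^{1-r}$ immediate.

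Your almost-sure argument for non-compact $\mu$ has a genuine gap, though. You propose replacing $\mu$ by its normalized restriction to a ball of radius $R_N\uparrow\infty$ growing with $N$. A normalized restriction conditions the kernels $\mu_{x_{1:t}}$ and thereby changes the disintegration, which is exactly the object $\AWD_1$ is sensitive to; and, more seriously, there is no canonical coupling between the empirical measure of $\mu$ and the empirical measure of the restricted law, so it is unclear how you would bound $\AWD_1^{(\sigma_N)}(\mu^N,\nu^N)$ at all. The paper resolves this with \cref{lem:compact_approx_1} and \cref{lem:compact_approx_2}: for each $\epsilon$ one constructs a \emph{fixed} compactly supported $\nu=\phi_{R\#}\mu$ using a stopping-time-adapted projection $\phi_R$ (paths are kept until they first exit a cube, then frozen at a fixed corner). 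This projection (i) produces an explicit bi-causal coupling $(\mathbf{id},\phi_R)_{\#}\mu$, so $\AWD_1(\mu,\nu)$ is bounded by a tail integral; and (ii) gives $\nu^N=\phi_{R\#}\mu^N$ as a deterministic function of the \emph{same} samples, so the strong law of large numbers controls $\sup_{\sigma\leq1}\AWD_1^{(\sigma)}(\mu^N,\nu^N)$ a.s. With $R$ fixed, Borel--Cantelli is applied to the compact $\nu$ with $\sigma_N=N^{-r}$; one lets $\epsilon\to0$ only at the very end, so the delicate simultaneous $R_N\uparrow\infty$, $\sigma_N\downarrow0$ limit you anticipate is never needed. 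The causality-preserving truncation map is the ingredient your sketch is missing.
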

\begin{remark}
    Recall that the optimal mean convergence rates of empirical measures under $\WD$-distance are
    \begin{equation*}
        \E\big[\WD_{1}(\mu , \mu^N )\big] \leq C\left\{\begin{aligned}
            &N^{-\frac{1}{2}}, &dT=1,\\
            &N^{-\frac{1}{2}}\log(N+1), &dT=2,\\
            &N^{-\frac{1}{dT}}, &dT\geq 3,
        \end{aligned}\right.
    \end{equation*}
    which is slightly faster than the $O(N^{-\frac{1}{dT+2}})$ convergence rate of $\mathsf{S}$-\textit{Emp} under $\AWD$-distance  in \cref{thm:smooth_emp}. However, the same $O(N^{-\frac{1}{dT}})$ convergence rate of $\mathsf{A}$-\textit{Emp} under the $\AWD$-distance is established in \cite{backhoff2022estimating, acciaio2022convergence} when $dT\geq 3$. This naturally raises the question: \textit{where does this gap come from?} As we shall see in the proof, the rate in \cref{thm:smooth_emp} essentially relies on those in \cref{thm:smooth_adw_mean_deviation} and \cref{thm:bw_lip_limit}.
    Although the rate $O(N^{-\frac{1}{2}})$ in \cref{thm:smooth_adw_mean_deviation} is sharp with respect to $N$ (i.e., it matches the Monte Carlo rate), the constant in front only scales as $O(\sigma^{-\frac{dT}{2}})$ with respect to $\sigma$. Therefore, when this is combined with the $O(\sigma)$ bandwidth effect in \cref{thm:bw_lip_limit}, we obtain the overall rate $O(N^{-\frac{1}{dT+2}})$ by taking $\sigma = N^{-\frac{1}{dT+2}}$. Notably, this gap between is not unique to the $\AWD$-distance. Similar phenomena have been observed in the context of the classical Wasserstein distance; see \cite{horowitz1994mean, cosso2025mean} for detailed discussions and related results.
\end{remark}

\subsection{Adapted smoothed empirical measures} 
In this subsection, we present convergence of adapted smoothed empirical measures. First, let us recall the definition of adapted empirical measures from \cite{acciaio2022convergence}.
\begin{definition}[Adapted empirical measures]\label{def:uniform_adapted_empirical_measure}
    For $\mu \in \calP_1(\R^{dT})$, $N \in \N$, and grid size $\Delta_N > 0$, we let $G_N = \lceil \frac{1}{\Delta_N}\rceil$ and consider the uniform partition $\hat{\Phi}^N$ of $\R^{dT}$ given by
    \begin{equation*}
        \hat{\Phi}^{N} = \left\{\hat{\mathcal{C}}_{\mathbf{z}}^{N} = \Big[0,\frac{1}{G_N}\Big]^{dT} + \frac{1}{G_N} \mathbf{z} , \mathbf{z} \in \mathbb{Z}^{dT}\right\}.
    \end{equation*}
    Let $\hat{\Lambda}^N$ be the set of mid points of all cubes $\hat{\mathcal{C}}_{\mathbf{z}}^{N}$ in the partition $\hat{\Phi}^{N}$, and let $\hat{\varphi}^{N}\colon \R^{dT} \to \hat{\Lambda}^N$ map each cube $\hat{\mathcal{C}}_{\mathbf{z}}^{N}$ to its mid point (points belonging to more than one cube can be mapped into any of them).
    Then we denote by 
    $$\hat{\mu}^{N} = \frac{1}{N}\sum_{n=1}^{N}\delta_{\hat{\varphi}^{N}(X^{(n)})}$$ 
    \textit{adapted empirical measures} ($\mathsf{A}$-\textit{Emp}) of $\mu$ with grid size $\Delta_N$.
\end{definition}
\begin{remark}
    Intuitively, $\mathsf{A}$-\textit{Emp} is constructed via the following procedure: (i) we tile $\R^{dT}$ with cubes of size $(\frac{1}{G_N})^{dT}$ that form the partition $\hat{\Phi}^{N}$; (ii) we project all points in each cube $\hat{\mathcal{C}}_{\mathbf{z}}^{N}$ to its mid point. As a result, the push-forward measures obtained as empirical measures of the samples after projections are precisely $\mathsf{A}$-\textit{Emp}. 
\end{remark}
Since adapted projection maps samples onto grid points, $\mathsf{A}$-\textit{Emp} have less distinct samples than empirical measures, as different samples may be projected to the same path on grid. Motivated by the idea of data augmentation, which is first proposed in \cite{simard2003best} to perturb existing data to create new examples, we introduce adapted smoothed empirical measures by adding independent Gaussian noise to samples, and subsequently applying the adapted projection introduced in \cref{def:uniform_adapted_empirical_measure}.

\begin{definition}[Adapted smoothed empirical  measures]\label{def:ad_smooth_emp}
    For $\mu \in \calP_1(\R^{dT})$, $\sigma >0$, $N, M \in \N$ and grid size $\Delta_N > 0$, we let $\zeta = (\zeta^m)_{m=1}^N$ where $\zeta^m$ be distinct points in $(0,\frac{1}{2G_N})^{dT}$ and denote by 
    \begin{equation*}
        \tilde{\mu}^{N,M}_{\sigma,\zeta} \defeq \frac{1}{M}\sum_{m=1}^{M}\tilde{\mu}^N_{\zeta,m}
    \end{equation*}
    \textit{adapted smoothed empirical measures} ($\mathsf{AS}$-\textit{Emp}) of $\mu$, where 
    \begin{equation*}
        \tilde{\mu}^N_{\zeta,m} = (x\mapsto x+\zeta^m)_{\#}\tilde{\mu}^N_m,\quad \tilde{\mu}^N_m\defeq \frac{1}{N}\sum_{n=1}^N \delta_{\hat{\varphi}^{N}(X^{(n)} + \sigma \varepsilon^{(n,m)})},
    \end{equation*}
    $(\varepsilon^{(n,m)})_{n,m\in\N}$ are i.i.d. samples from $\calN_{1}$ and $\hat{\varphi}^{N}$ is the adapted projection with grid size $\Delta_N$ in \cref{def:uniform_adapted_empirical_measure}. In particular, when $M=1$, we set w.l.o.g. $\zeta^1 = 0$ and denote by $\hat{\mu}_{\sigma}^{N} = \tilde{\mu}_{\sigma,\zeta}^{N,1}$ \textit{adapted smoothed 1-empirical measures} ($\mathsf{AS1}$-\textit{Emp}).
\end{definition}

\begin{remark}
    Intuitively, $\mathsf{AS}$-\textit{Emp} add noise to samples and then project the noised samples on adapted grid. The adapted projection is necessary; without it, $\mathsf{AS}$-\textit{Emp} fail to converge. The introduction of $\zeta^m$, $m=1,\dots,M$ is more technical, due to the non-convexity of the $\AWD$-distance; see details in \cref{sec:ad_smooth_emp_meas}.
\end{remark}

\begin{figure}[H]
    \centering
    \includegraphics[width=\linewidth]{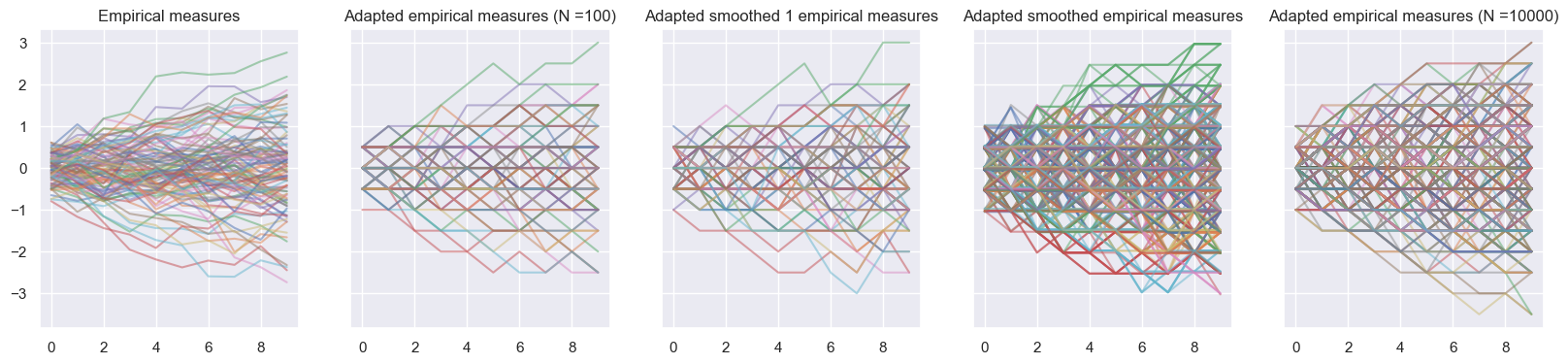}
    \caption{Visualization of different empirical measures. From left to right, they are empirical measures $\mu^N$ ($N=100$), adapted empirical measures $\hat{\mu}^N$ ($N=100$),  adapted smoothed 1-empirical measures $\tilde{\mu}^{N,1}_{\sigma}$ ($N=100$),  adapted smoothed empirical measures $\tilde{\mu}_{\sigma,\zeta}^{N,M}$ ($N=100, M=10$), and adapted empirical measures $\hat{\mu}^N$ ($N=1000$).}
    \label{fig:compare_different_measures}
\end{figure}

Notably, by adding Gaussian noise, $\mathsf{AS}$-\textit{Emp} are able to provide as many distinct samples as wanted. We call this property data-augmentation. Hence, $\mathsf{AS}$-\textit{Emp} enrich the support of samples compared to $\mathsf{A}$-\textit{Emp}; see Figure~\ref{fig:compare_different_measures}. We further establish convergence of $\mathsf{AS}$-\textit{Emp} in terms of mean and deviation, and almost sure convergence; see \cref{sec:ad_smooth_emp_meas} for the proof.
\begin{theorem}
\label{thm:ad_smooth_emp_mean_deviation}
    Set  $\Delta_N = \sigma_N = N^{-\frac{1}{\calD(d)T}}$ for all $N\in\N$, with $\calD(d) = d$ if $d\geq 3$ and  $\calD(d) = d+1$ if $d=1,2$. Let $L >0$, $\alpha \geq 2$, $\gamma > 0$, $\mu \in \mathcal{P}_1(\R^{dT})$ with finite $(\alpha,\gamma)$-exponential moment. Assume that $\bar \calE_{\alpha,\gamma}(\mu) \coloneqq \sup_{x_{1:t}\in\R^{dt}}\calE_{\alpha,2^\alpha \gamma}(\mu_{x_{1:t}}) < \infty$ for all $ t=1,\dots,T-1$, and that for all $\sigma \in (0,1]$, $\mu \ast \calN_{\sigma}$ has $L$-Lipschitz kernels. Then there exist constants $c,C > 0$ {\color{black} depending only on $d, T, L, \alpha, \gamma, \mathcal{E}_{\alpha,\gamma}(\mu), \bar{\calE}_{\alpha,\gamma}(\mu)$} s.t., for all $x>0$ and $N, M\in \N$,
    \begin{equation}
    \label{eq:thm:ad_smooth_emp_mean_deviation:0}
    \E\big[\AWD_1(\mu,{\color{black}\tilde{\mu}_{\sigma_N,\zeta}^{N,M}})\big] \leq CN^{-\frac{1}{\calD(d)T}}, \quad \sP\Big(\AWD_1(\mu,{\color{black}\tilde{\mu}_{\sigma_N,\zeta}^{N,M}}) \geq  x + CN^{-\frac{1}{\calD(d)T}} \Big) \leq CMe^{-c Nx^2},
    \end{equation}
    and $\lim_{N \to \infty}\AWD_1(\mu,{\color{black}\tilde{\mu}_{\sigma_N,\zeta}^{N,M}}) = 0~\sP\text{-a.s.}$.
\end{theorem}

To the best of our knowledge, $\mathsf{AS}$-\textit{Emp} are the first variants of empirical measures that are (i) discretely supported as a sum of Dirac measures, (ii) producing as many distinct samples as wanted, and (iii) converging under $\AWD$-distance; see Table~\ref{table:emp_meas} for comparison.
\begin{table}[H]
\centering
\begin{tabular}{lllcc}
\toprule
Symbol & Name & Convergence ($\AWD_1$) & Discrete & Augment data\\
  \midrule
  $\mu^N$ & empirical measures (\textit{Emp}) & \xmark & \cmark & \xmark \\
  $\mu_{\sigma_N} $ & smoothed measures & \cmark ~~(\cref{sec:bw_effect}) & \xmark & \xmark \\[1pt]
  $\mu^N \ast \calN_{\sigma_N} $ & $\mathsf{S}$-\textit{Emp} & \cmark ~~(\cref{sec:smooth_emp}) & \xmark & \cmark \\[1pt]
  $\hat{\mu}^N $ & $\mathsf{A}$-\textit{Emp} & \cmark ~~(\cite{acciaio2022convergence}) & \cmark & \xmark \\[1pt]
$\hat{\mu}_{\sigma_N}^{N}$ & $\mathsf{AS1}$-\textit{Emp} & \cmark~~(\cref{sec:ad_emp_smooth_meas}) & \cmark & \xmark \\[3pt]
  ${\color{black}\tilde{\mu}_{\sigma_N,\zeta}^{N,M}}$ & $\mathsf{AS}$-\textit{Emp} & \cmark~~(\cref{sec:ad_smooth_emp_meas}) & \cmark & \cmark \\
  \bottomrule
\end{tabular}
\caption{Comparison of different empirical measures.}
\label{table:emp_meas}
\end{table}

\section{Bounding $\AWD_1$ by $\TVD_1$}
\label{sec:bounding}
In this section, we prove inequality between $\AWD_1$ and $\TVD_1$ for unbounded measures, which will serve as a key bridge in the proof of our main results later.
\begin{definition}[Weighted adapted total variation distance]
    For $\mu,\nu\in\calP_1(\R^{dT})$, the first order \textit{weighed adapted total variation distance} $\AVD_1(\cdot,\cdot)$ on $\mathcal{P}(\R^{dT})$ is defined by 
    \begin{equation*}
        \AVD_1(\mu,\nu) = \inf_{\pi^{\mathrm{bc}} \in \bccpl(\mu,\nu)}\int (\Vert x \Vert + \Vert y \Vert + 1)\mathbbm{1}_{\{x\neq y\}}\pi(dx,dy).
    \end{equation*}
\end{definition}

\begin{definition}[Linear conditional moments]
\label{def:linear_cond_moment}
    For $\alpha >0$, we say that $\mu \in \calP_1(\R^{dT})$ has $\alpha$-linear conditional moments if for all $t = 1,\dots,T-1$ and $x_{1:t} \in \R^{dt}$, $\int \Vert x_{t+1} \Vert d\mu_{x_{1:t}} \leq \alpha (\Vert x_{1:t} \Vert + 1)$.
\end{definition}
{\color{black}
First, we recall the Kantorovich duality of optimal transport, a key lemma in the proof of \cref{lem:av}; see \cite[Theorem~5.10]{villani2009optimal} for detailed statements and the proof.
\begin{theorem}[Kantorovich duality]
\label{thm:kan_dual}
Let $\mu,\nu\in\calP(\R^d)$ and $\kappa\colon \R^d \times\R^d \to \R_{\geq 0}$ be a non-negative lower semicontinuous cost function. Then the following duality holds:
\begin{equation*}
\begin{split}
    \mathrm{OT}_\kappa(\mu,\nu) \coloneqq \inf_{\pi \in \cpl(\mu,\nu)}\int \kappa(x,y)\pi(dx,dy) = \sup_{\substack{f \in C_{b}(\R^d), g \in C_{b}(\R^{d})\\ f(x) + g(y) \leq \kappa(x,y)}} \Big(\int f(x)\mu(dx) + \int g(y) \nu(dy)\Big).
\end{split}
\end{equation*}
If there exists $(\kappa_1,\kappa_2) \in L^1(\mu)\times L^1(\nu)$ such that for all $(x,y) \in \R^d \times \R^d$, $\kappa(x,y) \leq \kappa_1(x) + \kappa_2(y)$, then both the primal and dual Kantorovich problems are attainable. 
\end{theorem}

}
\begin{lemma}
\label{lem:av}
    Let $\alpha > 0$ and $\mu,\nu\in\calP_1(\R^{dT})$ with $\alpha$-linear conditional moments. Then for all $t = 1,\dots, T-1$,
    \begin{equation*}
        \AVD_1(\mu_{1:t+1},\nu_{1:t+1}) \leq (2 + 4\alpha)\AVD_1(\mu_{1:t}, \nu_{1:t}) + \TVD_1(\mu,\nu).
    \end{equation*}
\end{lemma}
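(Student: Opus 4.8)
The plan is to establish the recursion by constructing, from an optimal bi-causal coupling of the time-$t$ marginals, an explicit bi-causal coupling of $\mu_{1:t+1}$ and $\nu_{1:t+1}$ and then bounding its cost piece by piece. Let $\pi\in\bccpl(\mu_{1:t},\nu_{1:t})$ attain $\AVD_1(\mu_{1:t},\nu_{1:t})$ (if no minimizer exists, take an $\varepsilon$-optimal $\pi$ and send $\varepsilon\to0$ at the end). I extend $\pi$ to the coordinate at time $t+1$ by prescribing, for $\pi$-a.e.\ $(x_{1:t},y_{1:t})$, a one-step kernel in $\cpl(\mu_{x_{1:t}},\nu_{y_{1:t}})$: on $\{x_{1:t}\neq y_{1:t}\}$ the independent coupling $\mu_{x_{1:t}}\otimes\nu_{y_{1:t}}$, and on the diagonal $\{x_{1:t}=y_{1:t}=:z\}$ the maximal coupling $\kappa_z=(\mathrm{id})_{\#}(\mu_z\wedge\nu_z)+\beta_z^{-1}\,\mu_z^{r}\otimes\nu_z^{r}$, where $\mu_z^r:=\mu_z-\mu_z\wedge\nu_z$, $\nu_z^r:=\nu_z-\mu_z\wedge\nu_z$, $\beta_z:=\mu_z^r(\R^d)=\tfrac12|\mu_z-\nu_z|(\R^d)$ (the product term being dropped when $\beta_z=0$). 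The resulting $\Pi$ has time-$t$ marginal $\pi$, so it is bi-causal, and its marginals are $\mu_{1:t+1},\nu_{1:t+1}$; hence $\AVD_1(\mu_{1:t+1},\nu_{1:t+1})\le\int(\Vert x_{1:t+1}\Vert+\Vert y_{1:t+1}\Vert+1)\mathbbm{1}_{\{x_{1:t+1}\neq y_{1:t+1}\}}\,d\Pi$.

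I then split this integral using $\{x_{1:t+1}\neq y_{1:t+1}\}=\{x_{1:t}\neq y_{1:t}\}\,\sqcup\,(\{x_{1:t}=y_{1:t}\}\cap\{x_{t+1}\neq y_{t+1}\})$ together with $\Vert x_{1:t+1}\Vert+\Vert y_{1:t+1}\Vert+1=(\Vert x_{1:t}\Vert+\Vert y_{1:t}\Vert+1)+(\Vert x_{t+1}\Vert+\Vert y_{t+1}\Vert)$, which produces three terms $(I)+(II)+(III)$. Term $(I)=\int(\Vert x_{1:t}\Vert+\Vert y_{1:t}\Vert+1)\mathbbm{1}_{\{x_{1:t}\neq y_{1:t}\}}\,d\pi=\AVD_1(\mu_{1:t},\nu_{1:t})$. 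For $(II)=\int(\Vert x_{t+1}\Vert+\Vert y_{t+1}\Vert)\mathbbm{1}_{\{x_{1:t}\neq y_{1:t}\}}\,d\Pi$, the one-step kernels always have $x_{t+1}$-marginal $\mu_{x_{1:t}}$ and $y_{t+1}$-marginal $\nu_{y_{1:t}}$, so by $\alpha$-linear conditional moments the inner integral is at most $\alpha(\Vert x_{1:t}\Vert+1)+\alpha(\Vert y_{1:t}\Vert+1)\le2\alpha(\Vert x_{1:t}\Vert+\Vert y_{1:t}\Vert+1)$ on $\{x_{1:t}\neq y_{1:t}\}$, whence $(II)\le2\alpha\,\AVD_1(\mu_{1:t},\nu_{1:t})$.

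The heart of the argument is the diagonal term $(III)=\int\mathbbm{1}_{\{x_{1:t}=y_{1:t},\,x_{t+1}\neq y_{t+1}\}}\,(2\Vert x_{1:t}\Vert+1+\Vert x_{t+1}\Vert+\Vert y_{t+1}\Vert)\,d\Pi$. Since $\mu_z^r\perp\nu_z^r$, the product part of $\kappa_z$ is carried by $\{x_{t+1}\neq y_{t+1}\}$, $\int\Vert x_{t+1}\Vert\mathbbm{1}_{\{x_{t+1}\neq y_{t+1}\}}\,d\kappa_z=\int\Vert w\Vert\,d\mu_z^r$, $\int\Vert y_{t+1}\Vert\mathbbm{1}_{\{x_{t+1}\neq y_{t+1}\}}\,d\kappa_z=\int\Vert w\Vert\,d\nu_z^r$, and $\mu_z^r+\nu_z^r=|\mu_z-\nu_z|$, so, writing $\rho$ for the $x$-marginal of $\pi$ restricted to the diagonal (which satisfies $\rho\le\mu_{1:t}\wedge\nu_{1:t}$, both marginals of $\pi|_{\mathrm{diag}}$ coinciding), the term collapses to $(III)=\int\rho(dz)\int(\Vert z\Vert+\Vert w\Vert+\tfrac12)\,|\mu_z-\nu_z|(dw)$. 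The subtle point, which is the main obstacle, is that this is \emph{not} bounded by $\TVD_1(\mu_{1:t+1},\nu_{1:t+1})$: because the conditioning measures $\mu_{1:t}$ and $\nu_{1:t}$ differ, a correction proportional to $\TVD_1(\mu_{1:t},\nu_{1:t})$ is unavoidable. To extract it, I pass to densities $F,G$ of $\mu_{1:t+1},\nu_{1:t+1}$ with respect to $\Lambda:=\mu_{1:t+1}+\nu_{1:t+1}$, with time-$t$ marginal densities $F_0,G_0$ and kernel densities $p:=F/F_0$, $q:=G/G_0$ relative to the disintegration $\Lambda(dz\,dw)=\Lambda_z(dw)\Lambda_0(dz)$, so that $|\mu_z-\nu_z|(dw)=|p-q|\,\Lambda_z(dw)$ and $\rho$ has density $\le\min(F_0,G_0)$. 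From $F_0(F/F_0-G/G_0)=(F-G)+G(G_0-F_0)/G_0$ and its mirror one gets the pointwise bound $\min(F_0,G_0)|p-q|\le|F-G|+|F_0-G_0|\,r$ with $r=q$ when $F_0\le G_0$ and $r=p$ otherwise; integrating $(\Vert z\Vert+\Vert w\Vert+\tfrac12)$ against $\Lambda$ and using $\int(\Vert z\Vert+\Vert w\Vert+\tfrac12)\,d\mu_z\le(1+\alpha)\Vert z\Vert+\alpha+\tfrac12\le(1+2\alpha)(\Vert z\Vert+\tfrac12)$ and the same for $\nu_z$ (this is where the two-sided conditional-moment hypothesis is needed) yields
\[
(III)\;\le\;\TVD_1(\mu_{1:t+1},\nu_{1:t+1})+(1+2\alpha)\,\TVD_1(\mu_{1:t},\nu_{1:t}).
\]

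Finally I combine the pieces. Projections do not increase the weighted total variation, so $\TVD_1(\mu_{1:t+1},\nu_{1:t+1})\le\TVD_1(\mu,\nu)$; and $\bccpl(\mu_{1:t},\nu_{1:t})\subseteq\cpl(\mu_{1:t},\nu_{1:t})$ together with the coupling form of $\TVD_1$ gives $\TVD_1(\mu_{1:t},\nu_{1:t})\le\AVD_1(\mu_{1:t},\nu_{1:t})$. Adding $(I)$, $(II)$, $(III)$ then gives $\AVD_1(\mu_{1:t+1},\nu_{1:t+1})\le\big(1+2\alpha+1+2\alpha\big)\AVD_1(\mu_{1:t},\nu_{1:t})+\TVD_1(\mu,\nu)=(2+4\alpha)\AVD_1(\mu_{1:t},\nu_{1:t})+\TVD_1(\mu,\nu)$, as claimed. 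To summarize, the only genuine difficulty is the estimate for $(III)$: the naive bound via $\mu_{1:t}\wedge\nu_{1:t}$ fails because the conditioning measures disagree, and recovering the stated constant requires the density-level splitting above and the conditional-moment assumption on both $\mu$ and $\nu$; the remaining steps are routine.
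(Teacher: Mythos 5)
Your argument is correct and reaches the same constant $2+4\alpha$, but it takes a genuinely different route to the diagonal term. The paper works with a general $\pi\in\bccpl(\mu_{1:t+1},\nu_{1:t+1})$, applies Kantorovich duality to the one-step cost to get dual potentials $f_{x_{1:t}},g_{y_{1:t}}$, and then manipulates $\int\big(\int f\,d\mu_{x_{1:t}}+\int g\,d\nu_{y_{1:t}}\big)\mathbbm{1}_{\{x_{1:t}=y_{1:t}\}}d\pi_{1:t}$ entirely at the measure level, extracting the correction via the diagonal marginal $\eta_{1:t}$ and the product measure $(\mu_{1:t}-\eta_{1:t})\otimes(\nu_{1:t}-\eta_{1:t})$. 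You instead construct an explicit bi-causal extension (gluing an $\AVD_1$-near-optimal $\pi_{1:t}$ with independent couplings off-diagonal and maximal couplings on-diagonal), compute the diagonal one-step cost in closed form as $\int\rho(dz)\int(\Vert z\Vert+\Vert w\Vert+\tfrac12)\,|\mu_z-\nu_z|(dw)$, and then isolate the correction at the density level via $\min(F_0,G_0)\,|p-q|\le|F-G|+|F_0-G_0|\,r$. Both arguments see the same obstruction --- the diagonal mass of the coupling is governed by $\mu_{1:t}\wedge\nu_{1:t}$ while the disintegrations of $\mu_{1:t+1},\nu_{1:t+1}$ are taken against the differing marginals --- and both absorb it by a $(1+2\alpha)$-weighted time-$t$ quantity (the off-diagonal cost $\int c\,\mathbbm{1}_{\neq}d\pi_{1:t}$ in the paper; $\TVD_1(\mu_{1:t},\nu_{1:t})\le\AVD_1(\mu_{1:t},\nu_{1:t})$ in yours), and both invoke the two-sided $\alpha$-linear conditional moment hypothesis precisely at that step. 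Your version is more constructive and avoids duality, and in fact yields the slightly sharper intermediate statement $\AVD_1(\mu_{1:t+1},\nu_{1:t+1})\le(1+2\alpha)\AVD_1(\mu_{1:t},\nu_{1:t})+(1+2\alpha)\TVD_1(\mu_{1:t},\nu_{1:t})+\TVD_1(\mu,\nu)$. The one technicality you should spell out is that the one-step kernel $z\mapsto\kappa_z$ (and the independent kernel off the diagonal) must be a measurable kernel; this follows from the explicit formula once a jointly measurable version of $(x_{1:t},y_{1:t})\mapsto(\mu_{x_{1:t}},\nu_{y_{1:t}})$ is fixed, and plays the same role as the paper's appeal to separate-continuity/measurability for the dual potentials.
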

\begin{proof}
Since $\mu$ and $\nu$ have $\alpha$-linear conditional moments, there exists $c>0$ such that for all $t = 1,\dots,T-1$ and $x_{1:t}, y_{1:t} \in \R^{dt}$,
\begin{equation}
\label{eq:lem_av:0}
    \int \Vert x_{t+1} \Vert d\mu_{x_{1:t}}  + \int \Vert y_{t+1} \Vert d\nu_{y_{1:t}}\leq \beta (\Vert x_{1:t} \Vert + \Vert y_{1:t} \Vert + 1),
\end{equation}
where $\beta = 2\alpha$. For notational simplicity, throughout the proof, we denote by $c(x,y) = \Vert x \Vert + \Vert y \Vert + 1$. Notice that for all $\pi \in \bccpl(\mu_{1:t+1},\nu_{1:t+1})$, we can separate the cases for indicator function in the cost as
\begin{equation}
\label{eq:lem_av:1}
    \begin{split}
    &\quad~\int c(x_{1:t+1},y_{1:t+1})\mathbbm{1}_{\{x_{1:t+1}\neq y_{1:t+1}\}}d\pi \\ 
    &= \int c(x_{1:t+1},y_{1:t+1})\mathbbm{1}_{\{x_{1:t}\neq y_{1:t}\}}d\pi + \int c(x_{1:t+1},y_{1:t+1})\mathbbm{1}_{\{x_{t+1}\neq y_{t+1}\}}\mathbbm{1}_{\{x_{1:t}= y_{1:t}\}}d\pi\\
    &= \int c(x_{1:t+1},y_{1:t+1})\mathbbm{1}_{\{x_{1:t}\neq y_{1:t}\}}d\pi + \int \int c(x_{1:t+1},y_{1:t+1})\mathbbm{1}_{\{x_{t+1}\neq y_{t+1}\}} d\pi_{x_{1:t},y_{1:t}}  \mathbbm{1}_{\{x_{1:t}= y_{1:t}\}}d\pi_{1:t}.
    \end{split}
\end{equation}
For the first term of the last line in \eqref{eq:lem_av:1}, we split the cost such that
\begin{equation}
\label{eq:lem_av:2}
    \begin{split}
    &\quad \int c(x_{1:t+1},y_{1:t+1})\mathbbm{1}_{\{x_{1:t}\neq y_{1:t}\}}d\pi\\
    &= \int c(x_{1:t},y_{1:t})\mathbbm{1}_{\{x_{1:t}\neq y_{1:t}\}}d\pi_{1:t} + \int \int (\Vert x_{t+1}\Vert + \Vert y_{t+1}\Vert)d\pi_{x_{1:t},y_{1:t}}\mathbbm{1}_{\{x_{1:t}\neq y_{1:t}\}}d\pi_{1:t}\\
    &= \int c(x_{1:t},y_{1:t})\mathbbm{1}_{\{x_{1:t}\neq y_{1:t}\}}d\pi_{1:t} + \int  (\int \Vert x_{t+1} \Vert d\mu_{x_{1:t}}  + \int \Vert y_{t+1} \Vert d\nu_{y_{1:t}})\mathbbm{1}_{\{x_{1:t}\neq y_{1:t}\}}d\pi_{1:t}\\
    &\leq (1+ \beta)\int c(x_{1:t},y_{1:t})\mathbbm{1}_{\{x_{1:t}\neq y_{1:t}\}}d\pi_{1:t},
    \end{split}
\end{equation}
where the last inequality is by \eqref{eq:lem_av:0}. Plugging \eqref{eq:lem_av:2} back into \eqref{eq:lem_av:1}, we have
\begin{equation}
\label{eq:lem_av:3}
    \begin{split}
    \int c(x_{1:t+1},y_{1:t+1})\mathbbm{1}_{\{x_{1:t+1}\neq y_{1:t+1}\}}&d\pi
    \leq (1+ \beta)\int c(x_{1:t},y_{1:t})\mathbbm{1}_{\{x_{1:t}\neq y_{1:t}\}}d\pi_{1:t}\\
    & + \int \int c(x_{1:t+1},y_{1:t+1})\mathbbm{1}_{\{x_{t+1}\neq y_{t+1}\}} d\pi_{x_{1:t},y_{1:t}}\mathbbm{1}_{\{x_{1:t}= y_{1:t}\}}d\pi_{1:t}.
    \end{split}
\end{equation}
Then by taking infimum of $\pi$ over $\bccpl(\mu_{1:t+1},\nu_{1:t+1})$ in \eqref{eq:lem_av:3} and the definition of $\AVD_1$, we have
\begin{equation}
\label{eq:lem_av:4}
\begin{split}
    &\quad~ \AVD_1(\mu_{1:t+1},\nu_{1:t+1})\\
    &= \inf_{\pi \in \bccpl(\mu_{1:t+1},\nu_{1:t+1})} \int c(x_{1:t+1},y_{1:t+1})\mathbbm{1}_{\{x_{1:t+1}\neq y_{1:t+1}\}}d\pi\\
    &= \inf_{\pi_{1:t} \in \bccpl(\mu_{1:t},\nu_{1:t})} \inf_{\pi_{x_{1:t},y_{1:t}} \in \cpl(\mu_{x_{1:t}},\nu_{y_{1:t}})}\int \int c(x_{1:t+1},y_{1:t+1})\mathbbm{1}_{\{x_{1:t+1}\neq y_{1:t+1}\}}d\pi_{x_{1:t},y_{1:t}}d\pi_{1:t}\\
    &\leq \inf_{\pi_{1:t} \in \bccpl(\mu_{1:t},\nu_{1:t})} \inf_{\pi_{x_{1:t},y_{1:t}} \in \cpl(\mu_{x_{1:t}},\nu_{y_{1:t}})} \bigg[(1+ \beta)\int c(x_{1:t},y_{1:t})\mathbbm{1}_{\{x_{1:t}\neq y_{1:t}\}}d\pi_{1:t}\\
    &\qquad\qquad + \int \int c(x_{1:t+1},y_{1:t+1})\mathbbm{1}_{\{x_{t+1}\neq y_{t+1}\}} d\pi_{x_{1:t},y_{1:t}}\mathbbm{1}_{\{x_{1:t}= y_{1:t}\}}d\pi_{1:t}\bigg]\\
    &= \inf_{\pi_{1:t} \in \bccpl(\mu_{1:t},\nu_{1:t})} \bigg[(1+ \beta)\int c(x_{1:t},y_{1:t})\mathbbm{1}_{\{x_{1:t}\neq y_{1:t}\}}d\pi_{1:t} \\
    &\qquad\qquad + \int\inf_{\pi_{x_{1:t},y_{1:t}} \in \cpl(\mu_{x_{1:t}},\nu_{y_{1:t}})} \int c(x_{1:t+1},y_{1:t+1})\mathbbm{1}_{\{x_{t+1}\neq y_{t+1}\}} d\pi_{x_{1:t},y_{1:t}}\mathbbm{1}_{\{x_{1:t}= y_{1:t}\}}d\pi_{1:t}\bigg].
\end{split}
\end{equation}
{\color{black}It is worth noting that $\bccpl(\cdot, \cdot)$, as a subset of bi-causal couplings, introduces a rather different mechanism when interacting with infimum and integral compared to $\cpl(\cdot, \cdot)$ in \eqref{eq:lem_av:4}, which could be viewed as a dynamic programming principle for adapted optimal transport problems.}
{\color{black} For the second term of the last line in \eqref{eq:lem_av:4}, we notice that for all $x_{1:t},y_{1:t}\in\R^{dt}$ fixed, $c(x_{1:t+1},y_{1:t+1})\mathbbm{1}_{\{x_{t+1} \neq y_{t+1}\}} \eqqcolon \kappa(x_{t+1}, y_{t+1})$ is non-negative and lower semicontinuous w.r.t. $(x_{t+1},y_{t+1})$. Moreover, with $x_{1:t},y_{1:t}\in\R^{dt}$ fixed, by choosing $\kappa_1(x_{t+1}) = \frac{1}{2} + \Vert x_{1:t+1} \Vert$ and $\kappa_2(y_{t+1}) = \frac{1}{2} + \Vert y_{1:t+1} \Vert$, we have $\kappa(x_{t+1}, y_{t+1}) \leq \kappa_1(x_{t+1}) + \kappa_2(y_{t+1})$ and $(\kappa_1, \kappa_2) \in L^1(\mu_{x_{1:t}})\times L^1(\nu_{y_{1:t}})$.} Therefore, by applying the Kantorovich duality theorem (\cref{thm:kan_dual}), we get the following duality for all $x_{1:t},y_{1:t}\in\R^{dt}$ and $\mu_{x_{1:t}}, \nu_{y_{1:t}} \in \calP(\R^{d})$,
\begin{equation}
\label{eq:lem_av:6}
\begin{split}
    &~\quad \inf_{\pi_{x_{1:t},y_{1:t}} \in \cpl(\mu_{x_{1:t}},\nu_{y_{1:t}})}\int c(x_{1:t+1},y_{1:t+1})\mathbbm{1}_{\{x_{t+1}\neq y_{t+1}\}} d\pi_{x_{1:t},y_{1:t}}\\
    &= \sup_{\substack{f_{x_{1:t}} \in C_{b}(\R^{d}), g_{y_{1:t}} \in C_{b}(\R^{d})\\ f_{x_{1:t}}(x_{t+1}) + g_{y_{1:t}}(y_{t+1}) \leq c(x_{1:t+1},y_{1:t+1})\mathbbm{1}_{\{x_{t+1} \neq y_{t+1} \}}}} \Big(\int f_{x_{1:t}} d\mu_{x_{1:t}} + \int g_{y_{1:t}} d\nu_{y_{1:t}}\Big),
\end{split}
\end{equation}
{\color{black}and both the primal and dual Kantorovich problems are attainable.} For all $x_{1:t}, y_{1:t} \in \R^{dt}$, $f_{x_{1:t}} \in C_b(\R^{d})$ and $ g_{y_{1:t}} \in C_b(\R^{d})$ such that $f_{x_{1:t}}(x_{t+1}) + g_{y_{1:t}}(y_{t+1}) \leq c(x_{1:t+1},y_{1:t+1})\mathbbm{1}_{\{x_{t+1} \neq y_{t+1} \}}$, we define $f\colon \R^{d(t+1)}\ni (x_{1:t},x_{t+1}) \mapsto f_{x_{1:t}}(x_{t+1}) \in \R$ and $g\colon \R^{d(t+1)}\ni (y_{1:t},y_{t+1}) \mapsto g_{y_{1:t}}(y_{t+1}) \in \R$. Notice that $f$ and $g$ are separately continuous. Then by \cite[Theorem~2.2]{johnson1969separate}, $f$ and $g$ are measurable. Notice that for all $x_{1:t+1}, y_{1:t+1} \in \R^{d(t+1)}$, $f(x_{1:t+1}) + g(y_{1:t+1}) \leq c(x_{1:t+1},y_{1:t+1})\mathbbm{1}_{\{x_{1:t+1} \neq y_{1:t+1} \}}$. By taking $y_{1:t+1} = 0$ and integral over $\pi$, we have $\int |f|d\mu_{1:t+1} \leq 1 + \int \Vert x_{1:t+1}\Vert d\mu_{1:t+1} + \vert g(0) \vert$. Therefore $f \in L^{1}(\mu_{1:t+1})$ and similarly we have $g \in L^{1}(\nu_{1:t+1})$. Let $\eta_{1:t}$ be the marginal distribution of $\pi_{1:t}$ on the diagonal i.e. $\eta_{1:t}(dx_{1:t}) = \int_{\R^{dt}} \mathbbm{1}_{\{x_{1:t}= y_{1:t}\}}\pi_{1:t}(dx_{1:t}, dy_{1:t})$. Then we have
{\color{black}
\begin{equation}
\label{eq:lem_av:7}
    \begin{split}
    &~\quad \int \Big(\int f_{x_{1:t}}(x_{t+1}) \mu_{x_{1:t}}(dx_{t+1}) + \int g_{y_{1:t}}(y_{t+1}) \nu_{y_{1:t}}(dy_{t+1})\Big)\mathbbm{1}_{\{x_{1:t}= y_{1:t}\}}\pi_{1:t}(dx_{1:t},dy_{1:t})\\
    &= \int \Big(\int f(x_{1:t+1}) \mu_{x_{1:t}}(dx_{t+1}) + \int g(y_{1:t+1}) \nu_{y_{1:t}}(dy_{1:t})\Big)\mathbbm{1}_{\{x_{1:t}= y_{1:t}\}}\pi_{1:t}(dx_{1:t},dy_{1:t})\\
    &= \int \int f(x_{1:t+1}) \mu_{x_{1:t}}(dx_{t+1})\eta_{1:t}(dx_{1:t}) + \int \int g(y_{1:t+1}) \nu_{y_{1:t}}(dy_{t+1})\eta_{1:t}(dy_{1:t})\\
    &= \int \int f(x_{1:t+1}) \mu_{x_{1:t}}(dx_{t+1})\mu_{1:t}(dx_{1:t}) +  \int\int g(y_{1:t+1}) \nu_{y_{1:t}}(dy_{t+1})\nu_{1:t}(dy_{1:t})\\
    &\quad - \int\int f(x_{1:t+1}) \mu_{x_{1:t}}(dx_{t+1})(\mu_{1:t} - \eta_{1:t})(dx_{1:t}) - \int\int g(y_{1:t+1}) \nu_{y_{1:t}}(dy_{t+1})(\nu_{1:t} - \eta_{1:t})(dy_{1:t})\\
    &= \int f(x_{1:t+1}) \mu_{1:t+1}(dx_{1:t+1})  +  \int g(y_{1:t+1}) \nu_{1:t+1}(dy_{1:t+1})\\
    &\quad - \int \int \big(f(x_{1:t+1}) + g(y_{1:t+1})\big)(\mu_{x_{1:t}}\otimes\nu_{y_{1:t}})(dx_{t+1},dy_{t+1}) \big((\mu_{1:t} - \eta_{1:t})\otimes(\nu_{1:t} - \eta_{1:t})\big)(dx_{1:t},dy_{1:t})\\
    &\leq \int f(x_{1:t+1}) \mu_{1:t+1}(dx_{1:t+1})  +  \int g(y_{1:t+1}) \nu_{1:t+1}(dy_{1:t+1})\\
    &\quad - \int \int \big\vert f(x_{1:t+1}) + g(y_{1:t+1})\big\vert(\mu_{x_{1:t}}\otimes\nu_{y_{1:t}})(dx_{t+1},dy_{t+1}) \big((\mu_{1:t} - \eta_{1:t})\otimes(\nu_{1:t} - \eta_{1:t})\big)(dx_{1:t},dy_{1:t}).
    \end{split}
\end{equation}
}
We first estimate the $\int f d\mu_{1:t+1}  +  \int g d\nu_{1:t+1}$ term in \eqref{eq:lem_av:7}. By \cref{thm:kan_dual}, we get
\begin{equation}
\label{eq:lem_av:8}
\begin{split}
\int f d\mu_{1:t+1} +  \int g d\nu_{1:t+1} &\leq \sup_{\substack{f \in L^1(\mu_{1:t+1}), g \in L^1(\nu_{1:t+1})\\ f(x_{1:t+1}) + g(y_{1:t+1}) \leq c(x_{1:t+1},y_{1:t+1})\mathbbm{1}_{\{x_{t+1} \neq y_{t+1} \}}}} \Big(\int f d\mu_{1:t+1} +  \int g d\nu_{1:t+1}\Big)\\
&= \inf_{\pi \in \cpl(\mu_{1:t+1}, \nu_{1:t+1})} \int c(x_{1:t+1},y_{1:t+1})\mathbbm{1}_{\{x_{t+1} \neq y_{t+1} \}} d\pi\\
&\leq \inf_{\pi \in \cpl(\mu, \nu)} \int c(x,y)\mathbbm{1}_{\{x\neq y \}} d\pi = \TVD_1(\mu,\nu).
\end{split}
\end{equation}
Next, we estimate the last term in \eqref{eq:lem_av:7}:
\begin{equation}
\label{eq:lem_av:9}
    \begin{split}
    &~\quad \int \int \vert f + g \vert d(\mu_{x_{1:t}}\otimes\nu_{y_{1:t}}) d\big((\mu_{1:t} - \eta_{1:t})\otimes(\nu_{1:t} - \eta_{1:t})\big)\\
    &\leq \int \int c(x_{1:t+1},y_{1:t+1}) d(\mu_{x_{1:t}}\otimes\nu_{y_{1:t}}) d\big((\mu_{1:t} - \eta_{1:t})\otimes(\nu_{1:t} - \eta_{1:t})\big)\\
    &= \int (\int \Vert x_{t+1} \Vert d\mu_{x_{1:t}}  + \int \Vert y_{t+1} \Vert d\nu_{y_{1:t}} + c(x_{1:t},y_{1:t})) d\big((\mu_{1:t} - \eta_{1:t})\otimes(\nu_{1:t} - \eta_{1:t})\big)\\
    &\leq (1+\beta)\int c(x_{1:t},y_{1:t}) d\big((\mu_{1:t} - \eta_{1:t})\otimes(\nu_{1:t} - \eta_{1:t})\big),
    \end{split}
\end{equation}
where the last inequality is by \eqref{eq:lem_av:0}. Also notice that 
\begin{equation}
\label{eq:lem_av:10}
\begin{split}
    \int c(x_{1:t},y_{1:t})\mathbbm{1}_{\{x_{1:t}\neq y_{1:t}\}}d\pi_{1:t} &= \int c(x_{1:t},y_{1:t}) d\pi_{1:t} - \int c(x_{1:t},y_{1:t})\mathbbm{1}_{\{x_{1:t}= y_{1:t}\}} d\pi_{1:t}\\
    &= \int c(x_{1:t},y_{1:t}) d(\mu_{1:t}\otimes \nu_{1:t}) - \int c(x_{1:t},x_{1:t}) d\eta_{1:t}\\
    &= \int c(x_{1:t},y_{1:t}) d\big((\mu_{1:t} - \eta_{1:t})\otimes(\nu_{1:t} - \eta_{1:t})\big),
\end{split}
\end{equation}
where the second equality is due to the separable cost $c(x_{1:t}, y_{1:t})$. Thus by plugging \eqref{eq:lem_av:10} into \eqref{eq:lem_av:9}, we have 
\begin{equation}
\label{eq:lem_av:11}
    \begin{split}
    \int \int \vert f + g \vert d(\mu_{x_{1:t}}\otimes\nu_{y_{1:t}}) d\big((\mu_{1:t} - \eta_{1:t})\otimes(\nu_{1:t} - \eta_{1:t})\big) \leq (1+\beta)\int c(x_{1:t},y_{1:t})\mathbbm{1}_{\{x_{1:t}\neq y_{1:t}\}}d\pi_{1:t}.
    \end{split}
\end{equation}
By combining \eqref{eq:lem_av:7}, \eqref{eq:lem_av:8} and \eqref{eq:lem_av:11}, we have
\begin{equation}
\label{eq:lem_av:12}
\begin{aligned}
    &\quad~\int \Big(\int f_{x_{1:t}} d\mu_{x_{1:t}} + \int g_{y_{1:t}} d\nu_{y_{1:t}}\Big)\mathbbm{1}_{\{x_{1:t}= y_{1:t}\}}d\pi_{1:t}\\
    &\leq (1+\beta)\int c(x_{1:t},y_{1:t})\mathbbm{1}_{\{x_{1:t}\neq y_{1:t}\}}d\pi_{1:t}+ \TVD_1(\mu,\nu).
\end{aligned}
\end{equation}
By combining \eqref{eq:lem_av:4}, \eqref{eq:lem_av:6} and \eqref{eq:lem_av:12}, we have
\begin{equation*}
\begin{split}
    \AVD_1(\mu_{1:t+1},\nu_{1:t+1}) &\leq \inf_{\pi_{1:t} \in \bccpl(\mu_{1:t},\nu_{1:t})} (2+ 2\beta)\int c(x_{1:t},y_{1:t})\mathbbm{1}_{\{x_{1:t}\neq y_{1:t}\}}d\pi_{1:t} + \TVD_1(\mu,\nu)\\
    &= (2+ 2\beta)\AVD_1(\mu_{1:t},\nu_{1:t}) + \TVD_1(\mu,\nu) = (2+ 4\alpha)\AVD_1(\mu_{1:t},\nu_{1:t}) + \TVD_1(\mu,\nu),
\end{split}
\end{equation*}
which completes the proof.
\end{proof}

\begin{lemma}
\label{lem:av_main}
    Let $\alpha > 0$ and $\mu,\nu\in\calP_1(\R^{dT})$ with $\alpha$-linear conditional moments. Then
    \begin{equation}
    \label{eq:lem:av_main:1}
        \AVD_1(\mu, \nu) \leq ((3+4\alpha)^{T} - 1)\TVD_1(\mu,\nu).
    \end{equation}
\end{lemma}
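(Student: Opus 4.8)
The plan is to iterate \cref{lem:av} along the time axis, after recording three elementary facts. (i) For every $s=1,\dots,T$ the pair $(\mu_{1:s},\nu_{1:s})$ again has $\alpha$-linear conditional moments, because the disintegration kernels of $\mu_{1:s}$ at times $1,\dots,s-1$ coincide with those of $\mu$ (and likewise for $\nu$); hence \cref{lem:av} is applicable to $(\mu_{1:t+1},\nu_{1:t+1})$ for each $t=1,\dots,T-1$, yielding $\AVD_1(\mu_{1:t+1},\nu_{1:t+1})\le(2+4\alpha)\AVD_1(\mu_{1:t},\nu_{1:t})+\TVD_1(\mu_{1:t+1},\nu_{1:t+1})$. (ii) $\TVD_1$ is monotone under coordinate projection: pushing any $\pi\in\cpl(\mu,\nu)$ forward by $(x,y)\mapsto(x_{1:s},y_{1:s})$ produces a coupling of $(\mu_{1:s},\nu_{1:s})$ whose cost does not increase, since $\Vert x_{1:s}\Vert\le\Vert x\Vert$, $\Vert y_{1:s}\Vert\le\Vert y\Vert$ and $\mathbbm{1}_{\{x_{1:s}\neq y_{1:s}\}}\le\mathbbm{1}_{\{x\neq y\}}$; taking the infimum gives $\TVD_1(\mu_{1:s},\nu_{1:s})\le\TVD_1(\mu,\nu)$. (iii) At a single time step the bi-causality constraint is vacuous, so $\bccpl(\mu_1,\nu_1)=\cpl(\mu_1,\nu_1)$ and therefore $\AVD_1(\mu_1,\nu_1)=\TVD_1(\mu_1,\nu_1)\le\TVD_1(\mu,\nu)$.

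Writing $\lambda=2+4\alpha$, $D=\TVD_1(\mu,\nu)$ and $a_t=\AVD_1(\mu_{1:t},\nu_{1:t})$, facts (i)--(ii) combine into the recursion $a_{t+1}\le\lambda a_t+D$ for $t=1,\dots,T-1$, with $a_1\le D$ from (ii)--(iii). I would then solve this by an immediate induction on $t$, obtaining $a_t\le D\sum_{k=0}^{t-1}\lambda^k$ for all $t$, and in particular $\AVD_1(\mu,\nu)=a_T\le D\sum_{k=0}^{T-1}(2+4\alpha)^k$.

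Finally I would bound the geometric sum by the claimed exponential: since $2+4\alpha\ge 1$ and $\binom{T}{k}\ge 1$ for $1\le k\le T$,
\[
\sum_{k=0}^{T-1}(2+4\alpha)^k\le\sum_{k=1}^{T}(2+4\alpha)^k\le\sum_{k=1}^{T}\binom{T}{k}(2+4\alpha)^k=(1+2+4\alpha)^T-1=(3+4\alpha)^T-1,
\]
the last equality being the binomial theorem; substituting into the previous display gives \eqref{eq:lem:av_main:1}.

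There is no genuine obstacle here, since the substance is already contained in \cref{lem:av}; the only two points deserving a moment's care are verifying that the marginals really do inherit the $\alpha$-linear conditional moment property (so that \cref{lem:av} legitimately applies to them) together with the monotonicity of $\TVD_1$ under projection, and the passage from the geometric series $\sum_k(2+4\alpha)^k$ to the clean closed form $(3+4\alpha)^T-1$, which the binomial theorem settles at once.
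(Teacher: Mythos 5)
Your proof is correct and matches the paper's: iterate \cref{lem:av} and close by induction, with your binomial-theorem bound of the geometric sum $\sum_{k=0}^{T-1}(2+4\alpha)^k\le(3+4\alpha)^T-1$ being a clean alternative to the paper's direct verification of the invariant $a_t\le((3+4\alpha)^t-1)\TVD_1(\mu,\nu)$. One small remark: your preliminary facts (i) and (ii) are superfluous, since \cref{lem:av} applied once to $(\mu,\nu)\in\calP_1(\R^{dT})$ already produces, for every $t=1,\dots,T-1$, the recursion with $\TVD_1(\mu,\nu)$ — not $\TVD_1(\mu_{1:t+1},\nu_{1:t+1})$ — on the right-hand side, so there is no need to re-apply the lemma to the marginals or to check that they inherit the linear conditional-moment property.
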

\begin{proof}
    We prove by induction. When $t=1$,
    \begin{equation*}
        \AVD_1(\mu_1,\nu_1) = \TVD_1(\mu_1,\nu_1) \leq \TVD_1(\mu,\nu) \leq ((3 + 4\alpha)^{t} - 1)\TVD_1(\mu,\nu).
    \end{equation*}
    For all $t=1,\dots,T-1$, assume that $\AVD_1(\mu_{1:t},\nu_{1:t}) = ((3 + 4\alpha)^{t} - 1)\TVD_1(\mu,\nu)$. Combining this with Lemma~\ref{lem:av}, then we have
    \begin{equation*}
    \begin{split}
        \AVD_1(\mu_{1:t+1},\nu_{1:t+1}) &\leq (2 + 4\alpha)\AVD_1(\mu_{1:t}, \nu_{1:t}) + \TVD_1(\mu,\nu)\\
        &\leq (2 + 4\alpha)((3 + 4\alpha)^{t} - 1)\TVD_1(\mu,\nu) + \TVD_1(\mu,\nu)\\
        &= \Big[(2 + 4\alpha)(3 + 4\alpha)^{t} - (2 + 4\alpha) + 1\Big]\TVD_1(\mu,\nu) \leq  (3 + 4\alpha)^{t+1}\TVD_1(\mu,\nu).
    \end{split}
    \end{equation*}
    By induction, we obtain \eqref{eq:lem:av_main:1} and complete the proof.
\end{proof}

\begin{theorem}[Metric dominations]\label{thm:metric_dom_new}
Let $\alpha > 0$ and $\mu,\nu\in\calP_1(\R^{dT})$ with $\alpha$-linear conditional moments. Then 
\begin{equation*}
    \AWD_{1}(\mu,\nu) \leq ((3 + 4\alpha)^{T} - 1)\TVD_{1}(\mu,\nu).
\end{equation*}
\end{theorem}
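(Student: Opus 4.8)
The plan is to deduce this immediately from Lemma~\ref{lem:av_main} together with a trivial pointwise comparison between the two cost functions. Concretely, I would first observe that for every $x,y\in\R^{dT}$ one has the pointwise inequality
\begin{equation*}
    \|x-y\| \;\leq\; (\|x\|+\|y\|+1)\,\mathbbm{1}_{\{x\neq y\}}.
\end{equation*}
Indeed, if $x=y$ both sides vanish, and if $x\neq y$ then $\|x-y\|\leq\|x\|+\|y\|\leq\|x\|+\|y\|+1$ by the triangle inequality. This is the only genuinely new observation needed, and it is elementary.

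Next I would integrate this inequality against an arbitrary bi-causal coupling $\pi\in\bccpl(\mu,\nu)$ to get $\int\|x-y\|\,d\pi \leq \int(\|x\|+\|y\|+1)\mathbbm{1}_{\{x\neq y\}}\,d\pi$, and then take the infimum over $\pi\in\bccpl(\mu,\nu)$ on both sides. Since the set of bi-causal couplings is the same on both sides, this yields $\AWD_1(\mu,\nu)\leq\AVD_1(\mu,\nu)$. (Here one should note that $\bccpl(\mu,\nu)$ is nonempty — the product-of-kernels coupling is bi-causal — and that all integrals are finite because $\mu,\nu\in\calP_1(\R^{dT})$, so the infima are over a well-defined nonempty set.)

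Finally, I would chain this with Lemma~\ref{lem:av_main}, which under the $\alpha$-linear conditional moment hypothesis gives $\AVD_1(\mu,\nu)\leq((3+4\alpha)^T-1)\TVD_1(\mu,\nu)$. Combining the two inequalities produces exactly the claimed bound $\AWD_1(\mu,\nu)\leq((3+4\alpha)^T-1)\TVD_1(\mu,\nu)$, completing the proof.

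There is essentially no hard step here: the substantive work — the recursive estimate in Lemma~\ref{lem:av} and its induction in Lemma~\ref{lem:av_main} — has already been carried out, and what remains is just the cost-domination $\|x-y\|\leq(\|x\|+\|y\|+1)\mathbbm{1}_{\{x\neq y\}}$ followed by monotonicity of the infimum. If anything, the only point to be careful about is making sure the hypotheses of Lemma~\ref{lem:av_main} are literally the same as those assumed here (they are: $\alpha$-linear conditional moments and $\mu,\nu\in\calP_1(\R^{dT})$), so that it can be invoked verbatim.
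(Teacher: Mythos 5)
Your proof is correct and follows essentially the same route as the paper: the pointwise domination $\|x-y\|\leq(\|x\|+\|y\|+1)\mathbbm{1}_{\{x\neq y\}}$ gives $\AWD_1\leq\AVD_1$ after taking infima over $\bccpl(\mu,\nu)$, and then Lemma~\ref{lem:av_main} finishes the job. The paper writes the first step in identical fashion, so there is nothing substantive to add.
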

\begin{proof}[Proof of Theorem~\ref{thm:metric_dom_new}]
By definitions of $\AVD_1(\cdot,\cdot)$ and $\AWD_1(\cdot,\cdot)$, we have 
\begin{equation*}
\begin{split}
    \AWD_1(\mu,\nu) &= \inf_{\pi^{\mathrm{bc}} \in \bccpl(\mu,\nu)}\int \Vert x - y \Vert \mathbbm{1}_{\{x\neq y\}}\pi(dx,dy)\\
    &\leq \inf_{\pi^{\mathrm{bc}} \in \bccpl(\mu,\nu)}\int (\Vert x \Vert + \Vert y \Vert + 1)\mathbbm{1}_{\{x\neq y\}}\pi(dx,dy) = \AVD_1(\mu,\nu).
\end{split}
\end{equation*}
Thus, by Lemma~\ref{lem:av_main} we conclude that $\AWD_1(\mu,\nu) \leq \AVD_1(\mu,\nu)\leq ((3 + 4\alpha)^{T} - 1)\TVD_{1}(\mu,\nu)$.
\end{proof}
The linear conditional moments condition in \cref{thm:metric_dom_new} can not be relaxed to moment conditions, e.g. see the counterexample below.
\begin{example}
\label{ex:metric_domination}
    For all $\epsilon \in (0,1)$, let $\mu^{\epsilon} = \epsilon(1-\epsilon)\delta_{(1,\frac{1}{\epsilon})} + \epsilon^{2}\delta_{(1,0)} + (1-\epsilon)\delta_{(0,0)}$ and $\nu^{\epsilon} = \epsilon(1-\epsilon)\delta_{(1,\frac{1}{\epsilon})} + (1-\epsilon + \epsilon^2)\delta_{(0,0)}$; see Figure~\ref{fig:ex_mu_nu_epsilon} for visualization. Note that for all $\epsilon \in (0,1)$, $M_1(\mu^{\epsilon}) \leq 2$. and $M_1(\nu^{\epsilon}) \leq 2$. {\color{black}However, with $x_1 = 1$, we get $\int \Vert x_{2} \Vert d\nu^\epsilon_{x_{1}} = \frac{1}{\epsilon} \to \infty$ as $\epsilon \to 0$, which implies that $\mu$ fails to admit linear conditional moments.} Then we compute that $\TVD_1(\mu^\epsilon, \nu^\epsilon) = 2\epsilon^2$ and $\AVD_1(\mu^\epsilon, \nu^\epsilon) = 2 + \epsilon - \epsilon^2$. Thus we have
    \begin{equation*}
        \lim_{\epsilon \to 0}\frac{\AVD_1(\mu^\epsilon, \nu^\epsilon)}{\TVD_1(\mu^\epsilon, \nu^\epsilon)} = \lim_{\epsilon \to 0} \frac{2 + \epsilon - \epsilon^2}{2\epsilon^2} = +\infty,
    \end{equation*}
    which implies that there is no uniform Lipschitz constant depending only on the first moments of measures in $\calP_1(\R^{dT})$ but not depending on conditional moments, such that \cref{lem:av_main} holds.
    \begin{figure}[H]
    \centering
    \begin{subfigure}{.4\textwidth}
      \centering
      \includegraphics[width=0.8\linewidth]{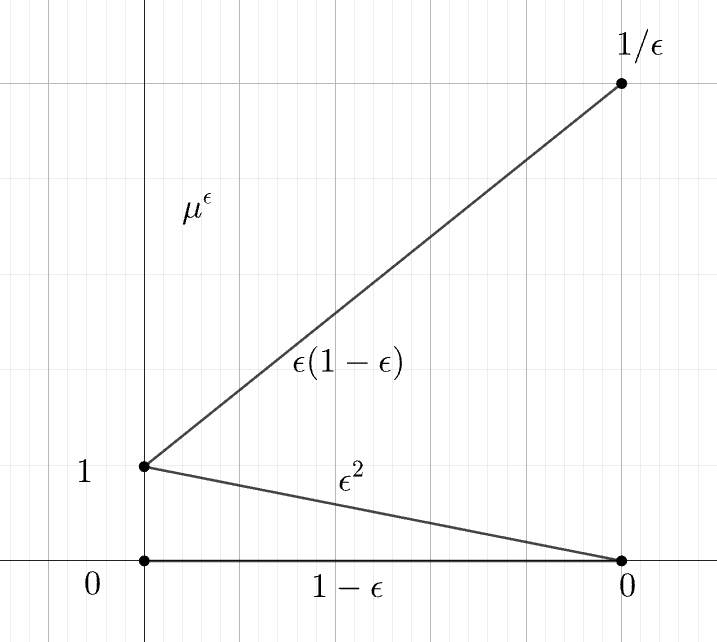}
      \caption{$\mu^\epsilon$}
    \end{subfigure}
    \begin{subfigure}{.4\textwidth}
      \centering
      \includegraphics[width=0.8\linewidth]{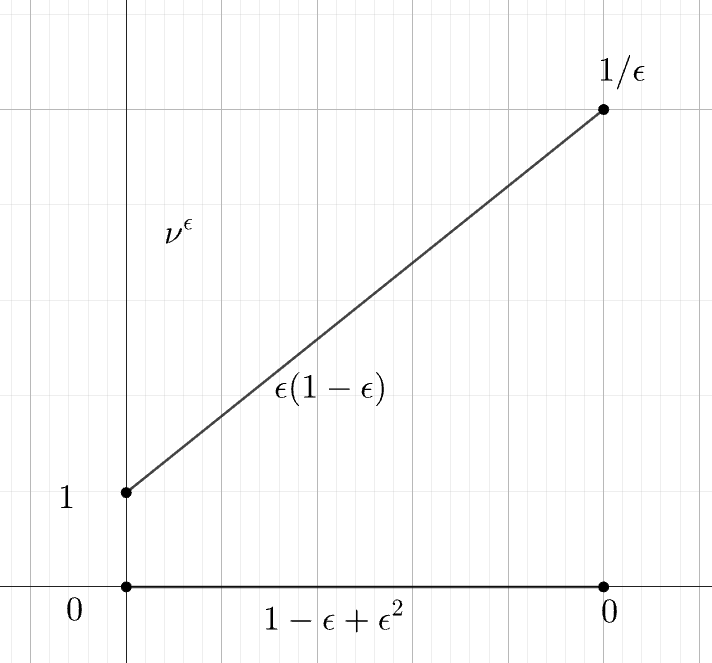}
      \caption{$\nu^\epsilon$}
    \end{subfigure}
    \caption{Visualization of $\mu^\epsilon$ and $\nu^\epsilon$.}
    \label{fig:ex_mu_nu_epsilon}
\end{figure}
\end{example}

\section{Smooth distances}\label{sec:smooth_distance}
In this section, we fix the bandwidth $\sigma$ and analyze the error 
between smoothed empirical measures $\mu^N \ast \calN_{\sigma}$ and the smoothed underlying measure $\mu_{\sigma} = \mu \ast \calN_{\sigma}$. For simplicity, we refer to the distance between two smoothed measures as the smooth distance between the measures. For all $\sigma > 0$, we denote by $\AWD_{1}^{(\sigma)}(\mu,\nu) = \AWD_{1}(\mu \ast \calN_{\sigma} , \nu \ast \calN_{\sigma})$ and $\TVD_{1}^{(\sigma)}(\mu,\nu) = \TVD_{1}(\mu \ast \calN_{\sigma} , \nu \ast \calN_{\sigma})$.

\subsection{Convergence under smooth $\TVD_1$}
\label{sec:smooth_distance_tvd}
\begin{theorem}[Mean convergence under smooth $\TVD_1$]\label{thm:smooth_tv_mean}
Let $p >2$ and $\mu \in \calP_1(\R^{dT})$ with finite $p$-th moment. Then there exist $C_1, C_2 > 0$ such that, for all $N \in \N$,
\begin{equation}
\label{eq:thm:smooth_tv_mean:0}
    \E\Big[\TVD_{1}^{(\sigma)}(\mu, \mu^N) \Big] \leq C_{\sigma, p,M_p}N^{-\frac{1}{2}},
\end{equation}
where $M_p = M_p(\mu)$ and
\begin{equation}\label{eq:C}
    C_{\sigma,p,M_p} = \left(\int \frac{(\Vert x \Vert + \frac{1}{2})^2}{1 + \Vert x \Vert^p}dx\right)^{\frac{1}{2}}  \sqrt{ \big(C_1(2^p M_p + 1) + C_2 2^{p} \sigma^{p}\big)\frac{1}{(2\pi\sigma)^{dT}}}.
\end{equation}
\end{theorem}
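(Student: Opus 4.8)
The plan is to reduce the statement to a weighted $L^{1}$-risk bound for the Gaussian kernel density estimator. Since $\calN_{\sigma}$ has a Lebesgue density, both $\mu_{\sigma} := \mu\ast\calN_{\sigma}$ and $\mu^{N}\ast\calN_{\sigma}$ are absolutely continuous on $\R^{dT}$, with densities
\[
f_{\sigma}(x)=\int_{\R^{dT}}\varphi_{\sigma}(x-y)\,\mu(dy),\qquad f_{\sigma}^{N}(x)=\frac1N\sum_{n=1}^{N}\varphi_{\sigma}\big(x-X^{(n)}\big),
\]
and $f_{\sigma}^{N}$ is exactly the kernel density estimator of $f_{\sigma}$ built from the samples. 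Since both measures are absolutely continuous with respect to Lebesgue measure, the total variation measure is $\big|\mu_{\sigma}-\mu^{N}\ast\calN_{\sigma}\big|=\big|f_{\sigma}-f_{\sigma}^{N}\big|\,dx$, which is immediate from $|\mu-\nu|=\mu+\nu-2(\mu\wedge\nu)$. Hence, by the integral form of the definition of $\TVD_{1}$,
\[
\TVD_{1}^{(\sigma)}(\mu,\mu^{N})=\int_{\R^{dT}}\Big(\|x\|+\tfrac12\Big)\big|f_{\sigma}(x)-f_{\sigma}^{N}(x)\big|\,dx,
\]
and, interchanging $\E$ with the $x$-integral by Tonelli, it suffices to control $\E\big[|f_{\sigma}(x)-f_{\sigma}^{N}(x)|\big]$ pointwise.

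For the pointwise bound, note that $f_{\sigma}^{N}(x)$ is an unbiased estimator of $f_{\sigma}(x)$, i.e. $\E[f_{\sigma}^{N}(x)]=f_{\sigma}(x)$, and is an average of $N$ i.i.d. copies of $\varphi_{\sigma}(x-X^{(1)})$. Jensen's inequality then gives
\[
\E\big[|f_{\sigma}(x)-f_{\sigma}^{N}(x)|\big]\le\sqrt{\Var\big(f_{\sigma}^{N}(x)\big)}=\frac{1}{\sqrt N}\sqrt{\Var\big(\varphi_{\sigma}(x-X^{(1)})\big)}\le\frac{1}{\sqrt N}\sqrt{\E\big[\varphi_{\sigma}(x-X^{(1)})^{2}\big]},
\]
so that
\[
\E\big[\TVD_{1}^{(\sigma)}(\mu,\mu^{N})\big]\le\frac{1}{\sqrt N}\int_{\R^{dT}}\Big(\|x\|+\tfrac12\Big)\sqrt{\E\big[\varphi_{\sigma}(x-X^{(1)})^{2}\big]}\,dx,
\]
and the remaining task is to identify this last integral with $C_{\sigma,p,M_{p}}$.

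Here I would bring in the polynomial weight $1+\|x\|^{p}$ and apply Cauchy–Schwarz in $L^{2}(\R^{dT},dx)$:
\[
\int_{\R^{dT}}\Big(\|x\|+\tfrac12\Big)\sqrt{\E[\varphi_{\sigma}(x-X^{(1)})^{2}]}\,dx\le\Big(\int_{\R^{dT}}\frac{(\|x\|+\tfrac12)^{2}}{1+\|x\|^{p}}\,dx\Big)^{1/2}\Big(\int_{\R^{dT}}(1+\|x\|^{p})\,\E[\varphi_{\sigma}(x-X^{(1)})^{2}]\,dx\Big)^{1/2}.
\]
The first factor is exactly the explicit weighting constant in the statement and depends on neither $N$ nor the samples. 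For the second factor I would use the Gaussian square identity $\varphi_{\sigma}^{2}=(4\pi\sigma^{2})^{-dT/2}\varphi_{\sigma/\sqrt2}$, Tonelli again, and the substitution $z=x-X^{(1)}$ to obtain
\[
\int_{\R^{dT}}(1+\|x\|^{p})\,\E[\varphi_{\sigma}(x-X^{(1)})^{2}]\,dx=(4\pi\sigma^{2})^{-dT/2}\Big(1+\E\big[\|X^{(1)}+Z\|^{p}\big]\Big),\qquad Z\sim\calN_{\sigma/\sqrt2}\ \text{independent of}\ X^{(1)}.
\]
Then $\|X^{(1)}+Z\|^{p}\le 2^{p-1}(\|X^{(1)}\|^{p}+\|Z\|^{p})$ together with $\E\|X^{(1)}\|^{p}=M_{p}(\mu)$ and $\E\|Z\|^{p}=c_{p,dT}\,\sigma^{p}$ (the $p$-th absolute moment of a Gaussian) yields a bound of the form $(2\pi\sigma)^{-dT}\big(C_{1}(2^{p}M_{p}+1)+C_{2}2^{p}\sigma^{p}\big)$ for suitable $C_{1},C_{2}>0$ depending only on $p$ and $dT$, which is the second factor of $C_{\sigma,p,M_{p}}$. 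Collecting the three displays proves the claim.

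The variance/bias decomposition and the two applications of Tonelli are mechanical; the only points requiring care are (i) verifying that the weighting integral $\int(\|x\|+\tfrac12)^{2}(1+\|x\|^{p})^{-1}\,dx$ is finite, which is precisely why the polynomial weight must be tuned to the linear growth of the cost and to the assumed moment, and (ii) tracking the exact powers of $\sigma$ and $2\pi$ through the Gaussian square identity so that the constant comes out in the stated closed form (the discrepancy between $(4\pi\sigma^{2})^{-dT/2}$ and $(2\pi\sigma)^{-dT}$ being absorbed into $C_{1},C_{2}$). I expect (ii) to be the only genuine bookkeeping obstacle.
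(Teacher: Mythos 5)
Your proof is correct and follows essentially the same route as the paper's: reduce to a weighted $L^1$-risk bound for the Gaussian kernel density estimator, control the pointwise variance by $N^{-1}\E[\varphi_\sigma(x-X^{(1)})^2]$, and split the weight via Cauchy--Schwarz against $1+\|x\|^p$. The only difference is cosmetic ordering --- you apply Tonelli and the pointwise Jensen/variance bound before Cauchy--Schwarz in $x$, while the paper applies Cauchy--Schwarz first and Jensen on the resulting integral; since these steps commute, both lead to the identical bound and identical constant up to dimensional factors absorbed into $C_1, C_2$.
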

\begin{proof}[Proof of \cref{thm:smooth_tv_mean}]
Recall that we denote the density of the Gaussian kernel by $\varphi_{\sigma}$. Since $\varphi_{\sigma}$ is smooth, by convolution, $\mu \ast \calN_{\sigma}$ and $\mu^N \ast \calN_{\sigma}$ also have smooth densities, and we denote them by $q$ and $q^N$. Let $f_p\colon \R^{dT} \to \R_{\geq 0}$ s.t. $f_p(x) = \frac{1}{1 + \Vert x \Vert^{p}}$ for all $x\in\R^{dT}$. By Cauchy-Schwarz, we have 
\begin{equation}\label{eq:thm:smooth_tv_mean:1}
\begin{split}
    \E\Big[\TVD_{1}^{(\sigma)}(\mu , \mu^N ) \Big] &= \E\Big[\int (\Vert x \Vert + \frac{1}{2}) |q - q_N|(x)dx \Big] \\
    &\leq \left(\int (\Vert x \Vert + \frac{1}{2})^2 f_p(x)dx\right)^{\frac{1}{2}}\left(\int\frac{\E[(q(x) - q_N(x))^2]}{f_p(x)}dx\right)^{\frac{1}{2}}.
\end{split}
\end{equation}
Notice that $\E[q_N(x)] = q(x)$. We have
\begin{equation*}
    \begin{split}
        \E\Big[(q(x) - q_N(x))^2\Big] &= \Var\left[q_N(x)\right] = \Var\Big[\frac{1}{N}\sum_{i=1}^N \varphi_{\sigma}(x - X^{(i)})\Big]
        = \frac{1}{N}\Var\Big[\varphi_{\sigma}(x - X^{(1)})\Big]\\
        &\leq \frac{1}{N}\E\Big[\varphi^2_{\sigma}(x - X^{(1)})\Big] = \frac{1}{N}\frac{1}{(2\pi\sigma^2)^{dT}}\E\Big[ e^{-\frac{\Vert x - X^{(1)} \Vert^2}{\sigma^2}}\Big].
    \end{split}
\end{equation*}
This implies that 
\begin{equation}
\label{eq:thm:smooth_tv_mean:2}
        \int\frac{\E\Big[(q(x) - q_N(x))^2\Big]}{f_p(x)}dx \leq \frac{1}{(2\pi\sigma^2)^{dT}}\cdot\frac{1}{N}\E\Big[\int \frac{1}{f_p(x)}\cdot e^{-\frac{\Vert x - X^{(1)} \Vert^2}{\sigma^2}} dx\Big].
\end{equation}
Notice that
\begin{equation*}
\begin{split}
    &\quad\E\Big[\int \frac{1}{f_p(x)}\cdot e^{-\frac{\Vert x - X \Vert^2}{\sigma^2}} dx\Big] = \int\int \frac{1}{f_p(x)}\cdot e^{-\frac{\Vert x - z \Vert^2}{\sigma^2}} dx\mu(dz)\\
    &= \int\int \frac{1}{f_p(z+y)}\cdot e^{-\frac{\Vert y \Vert^2}{\sigma^2}} dy\mu(dz),\quad (x = y+z)\\
    &= \int\int (\Vert z+y\Vert^{p} + 1)\cdot e^{-\frac{\Vert y \Vert^2}{\sigma^2}} dy\mu(dz)
    \leq \int\int \left(2^p\left(\Vert z\Vert^p +\Vert y\Vert^{p}\right)+1\right)\cdot e^{-\frac{\Vert y \Vert^2}{\sigma^2}} dy\mu(dz)\\
    &= \int  (2^p \Vert z\Vert^p + 1) \int e^{-\frac{\Vert y \Vert^2}{\sigma^2}} dy\mu(dz) + \int2^p \Vert y\Vert^{p} \cdot e^{-\frac{\Vert y \Vert^2}{\sigma^2}} dy\\
    &= \int (2^p \Vert z\Vert^p + 1) \int \sigma^{dT} e^{-\Vert u \Vert^2} du\mu(dz) + \int2^p \sigma^{dT+p}\Vert u\Vert^{p} \cdot e^{-\Vert u \Vert^2} du,\quad (y = \sigma u)\\
    &= C_1 \sigma^{dT}\int (2^p\Vert z\Vert^p + 1) \mu(dz) + C_2 2^p \sigma^{dT+p} = \sigma^{dT}\big(C_1(2^p M_p + 1) + C_2 2^{p} \sigma^{p}\big).
\end{split}
\end{equation*}
where $C_1$ and $C_2$ are appropriate constants.
Therefore, by combining this, \eqref{eq:thm:smooth_tv_mean:1} and \eqref{eq:thm:smooth_tv_mean:2}, we obtain that 
\begin{equation*}
    \E\Big[\TVD_{1}^{(\sigma)}(\mu , \mu^N ) \Big] \leq \left(\int \frac{(\Vert x \Vert + \frac{1}{2})^2}{1 + \Vert x \Vert^p}dx\right)^{\frac{1}{2}}  \sqrt{ \big(C_1(2^p M_p + 1) + C_2 2^{p} \sigma^{p}\big)\frac{1}{(2\pi\sigma)^{dT}}\frac{1}{N}}.
\end{equation*}
Therefore, by setting $C_{\sigma,p,M_p}$ as \eqref{eq:C}, we prove \eqref{eq:thm:smooth_tv_mean:0}.
\end{proof}

\begin{remark}
\label{rem:subgaussian_kernel}
    \cref{thm:smooth_tv_mean} holds  not only for Gaussian kernel $\calN_\sigma$, but also for a broad class of sub-Gaussian kernels. Let $\calG_{\sigma} \in \calP(\R^{dT})$ with density $g_{\sigma}$ that decomposes as $g_{\sigma}(x) = \prod_{j=1}^{dT}\tilde{g}_{\sigma}(x_j)$ and the measure with density $\tilde{g}_{\sigma}$ is $\sigma$-subgaussian, bounded and monotonically decreasing as its argument goes away from zero in either direction. Let $\delta = \min\{1, \frac{1}{4\sigma^2}\}$, then by Lemma~2 in \cite{Goldfeld2020GaussianSmoothedOT}, there exists a constant $c_1 > 0$ such that for all $x \in \R^{dT}$, $g_{\sigma}(x) \leq c_1^{dT}e^{\delta\Vert x \Vert^2}\varphi_{\sigma}(x)$. Then by replacing $\calN_{\sigma}$ with $\calG_{\sigma}$, \cref{thm:smooth_tv_mean} still holds but with a different constant. For details, see \cite{Goldfeld2020GaussianSmoothedOT}.
\end{remark}

\begin{theorem}[Deviation convergence under smooth $\TVD_1$]
\label{thm:smooth_tv_deviation}
Let $K \subseteq \R^{dT}$ be compact and $\mu \in \calP(K)$. Then there exists $c_1 > 0$ s.t. for all $x > 0$ and $N \in \N$,
\begin{equation}
\label{eq:thm:smooth_tv_deviation:0}
    \sP\Big(\TVD_{1}^{(\sigma)}(\mu , \mu^N ) - \E\!\left[\TVD_{1}^{(\sigma)}(\mu , \mu^N )\right] \geq x \Big) \leq e^{-\frac{Nx^2}{c_{\sigma,K}^2}},
\end{equation}
where
\begin{equation}\label{eq:c}
    c_{\sigma,K} = c_1 (\sup_{x\in K}\frac{1 + 2\Vert x \Vert}{2\sigma} + 1).   
\end{equation}
\end{theorem}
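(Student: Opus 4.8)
The plan is to deduce the deviation bound from McDiarmid's bounded differences inequality applied to the map $F(X^{(1)},\dots,X^{(N)}) \defeq \TVD_1^{(\sigma)}(\mu,\mu^N)$. Since $\calN_\sigma$ has the smooth density $\varphi_\sigma$, the measures $\mu_\sigma = \mu\ast\calN_\sigma$ and $\mu^N_\sigma = \mu^N\ast\calN_\sigma$ are absolutely continuous with densities $q = \varphi_\sigma\ast\mu$ and $q_N = \varphi_\sigma\ast\mu^N = \frac1N\sum_{n=1}^{N}\varphi_\sigma(\cdot - X^{(n)})$, so by the first equality in the definition of $\TVD_1$ we have the representation $F = \int(\|x\|+\tfrac12)\,|q(x)-q_N(x)|\,dx$; this is finite because $\mu$, being compactly supported, has finite first moment, hence so do $\mu_\sigma$ and $\mu^N_\sigma$. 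Writing $\rho(p_1,p_2) \defeq \int(\|x\|+\tfrac12)\,|p_1(x)-p_2(x)|\,dx$ for the weighted $L^1$ distance between densities — which is a metric, and a seminorm in the difference — we have $F=\rho(q,q_N)$.

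Next I would check the bounded differences property. Because $\mu(K)=1$, $\sP$-almost surely every $X^{(n)}$ lies in $K$; fix such a sample, fix an index $i$, and replace $X^{(i)}$ by an arbitrary $\tilde X^{(i)}\in K$, obtaining the density $\tilde q_N$ with $q_N-\tilde q_N = \frac1N\big(\varphi_\sigma(\cdot - X^{(i)})-\varphi_\sigma(\cdot - \tilde X^{(i)})\big)$. By the reverse triangle inequality for $\rho$,
\begin{equation*}
 \big|F(\dots,X^{(i)},\dots) - F(\dots,\tilde X^{(i)},\dots)\big| \;=\; \big|\rho(q,q_N)-\rho(q,\tilde q_N)\big| \;\le\; \rho(q_N,\tilde q_N) \;=\; \frac1N\int(\|x\|+\tfrac12)\,\big|\varphi_\sigma(x-X^{(i)})-\varphi_\sigma(x-\tilde X^{(i)})\big|\,dx.
\end{equation*}
Thus the $i$-th bounded difference is at most $D/N$, uniformly in $i$, where $D \defeq \sup_{a,b\in K}\int(\|x\|+\tfrac12)\,|\varphi_\sigma(x-a)-\varphi_\sigma(x-b)|\,dx$. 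Since $\sum_{i=1}^{N}(D/N)^2 = D^2/N$, McDiarmid's inequality gives $\sP\big(F-\E F\ge x\big)\le \exp(-2Nx^2/D^2)$, and it remains to show $D\le\sqrt2\,c_{\sigma,K}$.

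The core of the argument is the estimate on $D$, which is where the factor $\sigma^{-1}$ enters. From $\nabla\varphi_\sigma(y) = -\sigma^{-2}y\,\varphi_\sigma(y)$ and the integral form of the mean value theorem applied to $m\mapsto\varphi_\sigma(x-m)$ along $[a,b]$,
\begin{equation*}
 |\varphi_\sigma(x-a)-\varphi_\sigma(x-b)| \;\le\; \|a-b\|\int_0^1 \sigma^{-2}\,\|x-z_t\|\,\varphi_\sigma(x-z_t)\,dt, \qquad z_t = b+t(a-b)\in[a,b].
\end{equation*}
Multiplying by $\|x\|+\tfrac12$, integrating in $x$, and substituting $x = z_t+\sigma u$ (so $\varphi_\sigma(x-z_t)\,dx = \varphi_1(u)\,du$ and $\|x-z_t\| = \sigma\|u\|$), we get, with $G\sim\calN_1$,
\begin{equation*}
 \int(\|x\|+\tfrac12)\,\sigma^{-2}\|x-z_t\|\,\varphi_\sigma(x-z_t)\,dx \;=\; \tfrac1\sigma\,\E\big[(\|z_t+\sigma G\|+\tfrac12)\|G\|\big] \;\le\; \frac{\|z_t\|+\tfrac12}{\sigma}\,\E\|G\| + \E\|G\|^2 .
\end{equation*}
Since the endpoints $a,b$ — hence the whole segment — lie in $K$, we have $\|z_t\|\le\sup_{x\in K}\|x\|$ and $\|a-b\|\le\diam(K)\le 2\sup_{x\in K}\|x\|$, so $D$ is bounded by a constant depending only on $d,T$ and $K$ times $\big(\sup_{x\in K}\tfrac{1+2\|x\|}{2\sigma}+1\big)$. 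Absorbing $\diam(K)$, $\E\|G\|$, $\E\|G\|^2$ and a factor $\sqrt2$ into $c_1$ yields $D\le\sqrt2\,c_{\sigma,K}$, and combining with the McDiarmid estimate gives $\sP(F-\E F\ge x)\le\exp(-2Nx^2/D^2)\le\exp(-Nx^2/c_{\sigma,K}^2)$, as claimed. (When $\|a-b\|$ is large compared to $\sigma$ one may alternatively bound the two Gaussian terms in $|\varphi_\sigma(x-a)-\varphi_\sigma(x-b)|$ separately, giving the even smaller quantity $2(\sup_{x\in K}\|x\|+\sigma\,\E\|G\|+\tfrac12)$; in all cases the bound $D\le\sqrt2\,c_{\sigma,K}$ holds.)

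The main obstacle is precisely this $D$-estimate and the bookkeeping of the $\sigma$-dependence: differentiating the Gaussian density produces a factor $\sigma^{-2}$, but rescaling the integration variable by $\sigma$ inside the Gaussian integral recovers one power of $\sigma$, so the net blow-up is the $\sigma^{-1}$ appearing in $c_{\sigma,K}$; at the same time the weight $\|x\|+\tfrac12$ is unbounded, which is harmless because it is integrated against a light-tailed Gaussian centered at a point of the compact set $K$. The measurability of $F$, the finiteness of all the integrals, and the validity of the triangle inequality for $\rho$ are routine consequences of the compact support of $\mu$.
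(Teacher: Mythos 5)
Your proof is correct and, in substance, the same as the paper's: both apply McDiarmid's bounded differences inequality to $F=\TVD_1^{(\sigma)}(\mu,\mu^N)$ and derive the $\sigma^{-1}$ dependence in $c_{\sigma,K}$ from a Lipschitz-type estimate on the shifted Gaussian kernel, using the compactness of $K$ to control the polynomial weight $\Vert x\Vert+\tfrac12$. The difference is purely one of bookkeeping: you work on the primal side with the weighted $L^1$ seminorm $\rho$ of the densities, bound the per-coordinate variation by $\rho(q_N,\tilde q_N)=\frac1N\int(\Vert x\Vert+\tfrac12)\,\vert\varphi_\sigma(x-X^{(i)})-\varphi_\sigma(x-\tilde X^{(i)})\vert\,dx$ via the reverse triangle inequality, and then estimate this integral by the mean value theorem plus a $z\mapsto\sigma u$ rescaling; the paper instead passes to the dual variational form $F=\sup_{f\in\calF}\frac1N\sum_n(f\ast\varphi_\sigma)(X^{(n)})-\int(f\ast\varphi_\sigma)\,d\mu$ (via the signed Hahn-type maximizer $f^*=\operatorname{sign}(q_N-q)(\Vert\cdot\Vert+\tfrac12)$) and bounds the Lipschitz constant of $f\ast\varphi_\sigma$ on $K$ uniformly over $f\in\calF$, using the same gradient identity $\nabla\varphi_\sigma(y)=-\sigma^{-2}y\,\varphi_\sigma(y)$ and the same rescaling. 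Since $\sup_{f\in\calF}\vert(f\ast\varphi_\sigma)(a)-(f\ast\varphi_\sigma)(b)\vert$ equals exactly your $\int(\Vert x\Vert+\tfrac12)\vert\varphi_\sigma(x-a)-\varphi_\sigma(x-b)\vert\,dx$, the two estimates are identical; your version simply avoids introducing the function class $\calF$ and the sup-preservation argument, which makes the bounded-differences verification a one-line triangle inequality rather than a sup-of-differences comparison. As a side note, the paper's intermediate expressions carry a spurious factor $\sigma^{dT}$ from the change of variables $z=\sigma u$ (the Jacobian $\sigma^{dT}\,du$ is exactly cancelled by $\varphi_\sigma(\sigma u)=\sigma^{-dT}\varphi_1(u)$), so its $c_1$ as written is not $\sigma$-independent; your computation tracks this cancellation correctly.
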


\begin{proof}[Proof of \cref{thm:smooth_tv_deviation}]
In the proof, we apply McDiarmid's inequality; see \cite{mcdiarmid1989method}, to $\TVD_{1}^{(\sigma)}(\mu , \mu^N )$. First, we derive a variational expression of $\TVD_{1}^{(\sigma)}(\mu , \mu^N ) = \TVD_{1}(\mu \ast \calN_{\sigma} , \mu^N \ast \calN_{\sigma})$. Let $\calF = \{f \in \calB(\R^d,\R) \colon |f(x)| \leq (\Vert x \Vert + \frac{1}{2}), \forall x \in \R^{dT}\}$, $\calF_{\sigma} = \{f\ast\varphi_{\sigma} \colon f \in \calF\}$. 
Since $\varphi_{\sigma}$ is smooth, then by convolution, $\mu \ast \calN_{\sigma}$ and $\mu^N \ast \calN_{\sigma}$ also have smooth densities, and we denote them by $q$ and $q^N$. Let $f^{*}(x) = \mathrm{sign}(q^N(x) -q(x))\cdot (\Vert x \Vert + \frac{1}{2}) \in \calF$. Then, we have
\begin{equation}
\label{eq:thm:smooth_tv_deviation:1}
\begin{split}
    \TVD_{1}^{(\sigma)}(\mu , \mu^N ) 
    &= \int(\Vert x \Vert + \frac{1}{2}) |q(x) - q^N(x)|dx
    = \int f^{*}(x) q^N(x)dx - \int f^{*}(x) q(x)dx \\
    &= \sup_{f\in\calF}\left(\int_{\R^{dT}}  f(x) \left(\int_{\R^{dT}}\varphi_{\sigma}(x-y)\mu^N(dy) - \int_{\R^{dT}}\varphi_{\sigma}(x-y)\mu(dy)\right)dx\right)\\
    &=\sup_{f\in\calF}\left(\int_{\R^{dT}}(  f \ast\varphi_{\sigma})(y)\mu^N(dy) - \int_{\R^{dT}}(f\ast\varphi_{\sigma})(y)\mu(dy)\right)\\
    &=\sup_{f\in\calF}\left(\frac{1}{N}\sum_{n=1}^{N}(f \ast\varphi_ {\sigma})(X^{(n)}) - \int_{\R^{dT}}(f\ast\varphi_{\sigma})(y)\mu(dy)\right).
\end{split}
\end{equation}
Let $F \colon K^N \to \R$ s.t. for all $(x_1, \dots, x_N) \in K^N$,
\begin{equation*}
    F(x_1,\dots,x_N) = \sup_{f\in\calF}\left(\frac{1}{N}\sum_{n=1}^{N}(f \ast\varphi_{\sigma})(x_n) - \int_{\R^{dT}}(f\ast\varphi_{\sigma})(y)\mu(dy)\right).
\end{equation*}
Next, we show that $F$ satisfies the conditions to apply the McDiarmid's inequality. For all $(x_1, \dots, x_N)$, $(x_1^{\prime},\dots, x_N^{\prime}) \in K^N$ that differ only in the $i$-th coordinate, $i =1,\dots,N$, we have that 
\begin{equation}
\label{eq:thm:smooth_tv_deviation:2}
    \begin{split}
        &\quad F(x_1, \dots, x_N) - F(x'_1, \dots, x'_N)\\
        &= \sup_{f\in\calF}\left(\frac{1}{N}\sum_{n=1}^{N}(f \ast\varphi_{\sigma})(x_n) - \int_{\R^{dT}}(f\ast\varphi_{\sigma})(y)\mu(dy)\right)\\
        &\qquad - \sup_{g\in\calF}\left(\frac{1}{N}\sum_{n=1}^{N}(g \ast\varphi_{\sigma})(x'_n) - \int_{\R^{dT}}(g\ast\varphi_{\sigma})(y)\mu(dy)\right)\\
        &\leq \sup_{f\in\calF}\left(\frac{1}{N}\sum_{n=1}^{N}(f \ast\varphi_{\sigma})(x_n) - \frac{1}{N}\sum_{n=1}^{N}(f \ast\varphi_{\sigma})(x'_n)\right) = \sup_{f\in\calF}\left(\frac{1}{N}(f \ast\varphi_{\sigma})(x_i) - \frac{1}{N}(f \ast\varphi_{\sigma})(x'_i)\right).
    \end{split}
\end{equation}
Notice that for all $f \in \calF$, $x = (x^{(1)},\dots,x^{(dT)})\in K$ and $j=1,\cdots,dT$,
\begin{equation*}
\begin{split}
    &\quad~\frac{\partial }{\partial x^{(j)}}(f \ast\varphi_{\sigma})(x) \\
    &= \frac{\partial }{\partial x^{(j)}}\int_{\R^{dT}}\varphi_{\sigma}(x-y)f(y)dy
    = \int_{\R^{dT}}\frac{\partial }{\partial x^{(j)}}\varphi_{\sigma}(x-y)f(y)dy\\
    &= -\int_{\R^{dT}} \frac{y^{(j)} - x^{(j)}}{\sigma^2}\varphi_{\sigma}(y-x)f(y)dy
    = -\int_{\R^{dT}} \frac{z^{(j)}}{\sigma^2}\varphi_{\sigma}(z)f(x+z)dz,\quad (z = y-x)\\
    &\leq \int_{\R^{dT}} \frac{\vert z^{(j)} \vert}{\sigma^2}\varphi_{\sigma}(z)(\frac{1}{2} + \Vert x + z \Vert) dz = \sigma^{dT}\int_{\R^{dT}} \vert \frac{u^{(j)}}{\sigma} \vert\varphi_{1}(u)(\frac{1}{2} + \Vert x + \sigma u \Vert) du,\quad (z = \sigma u)\\
    &\leq \sigma^{dT}\int_{\R^{dT}} \frac{\frac{1}{2} + \Vert x \Vert}{\sigma} \vert u^{(j)}\vert \varphi_{1}(u) du + \sigma^{dT}\int_{\R^{dT}} \vert u^{(j)} \vert\varphi_{1}(u)\Vert u \Vert du\\
    &\leq \sigma^{dT}(\frac{1 + 2\Vert x \Vert}{2\sigma} + 1)\int_{\R^{dT}} \vert u^{(j)} \vert (1 + \Vert u \Vert)\varphi_{1}(u) du
    \leq \sigma^{dT}(\sup_{x\in K}\frac{1 + 2\Vert x \Vert}{2\sigma} + 1)\int_{\R^{dT}} \vert u^{(j)} \vert (1 + \Vert u \Vert)\varphi_{1}(u) du.
\end{split}
\end{equation*}
Let $c_1 = \sigma^{dT}\int_{\R^{dT}} \vert u^{(1)} \vert (1 + \Vert u \Vert)\varphi_{1}(u) du$ and $c_{\sigma,K} = c_1(\sup_{x\in K}\frac{1 + 2\Vert x \Vert}{2\sigma} + 1)$. Thus for all $x,x' \in K$, we have 
\begin{equation*}
    \sup_{f\in\calF}|(f \ast\varphi_{\sigma})(x) -  (f \ast\varphi_{\sigma})(x')| \leq c_{\sigma,K} \Vert x - x'\Vert.
\end{equation*}
Combine this with \eqref{eq:thm:smooth_tv_deviation:2}. We have for all $(x_1, \dots, x_N)$, $(x_1^{\prime},\dots, x_N^{\prime}) \in K^N$ that differ only in the $i$-th coordinate, $i =1,\dots,N$, $F(x_1, \dots, x_N) - F(x'_1, \dots, x'_N) \leq \frac{c_{\sigma,K}}{N}\Vert x_i - x'_i\Vert$. Therefore, we can apply McDiarmid's inequality; see \cite{mcdiarmid1989method}, to conclude that for all $x >0$, $N \in \N$, 
\begin{equation*}
    \sP\Big( F(X^{(1)},\dots,X^{(N)}) - \E[F(X^{(1)},\dots,X^{(N)})] \geq x \Big) \leq e^{-\frac{Nx^2}{c_{\sigma, K}^2}}.
\end{equation*}
Combining this, the definition of $F$ and \eqref{eq:thm:smooth_tv_deviation:1}, we prove \eqref{eq:thm:smooth_tv_deviation:0}.
\end{proof}

\subsection{Convergence under smooth $\AWD_1$}
\label{sec:smooth_distance_awd}
In this subsection, we extend the convergence results under smooth $\TVD_1$ to smooth $\AWD_1$ with \cref{thm:metric_dom_new}.

\begin{theorem}[Convergence rates under smooth $\AWD_1$]\label{thm:smooth_adw_mean_deviation}
Let $K \subseteq \R^{dT}$ be compact and $\mu \in \calP(K)$. Then for all $\sigma \in (0,1]$, there exist $C_K, C_{\sigma,p,M_p}, c_{\sigma,K} > 0$ such that, for all $x > 0$ and $N \in \N$,
\begin{equation}
\label{eq:thm:smooth_adw_mean_deviation:1}
    \E\left[\AWD_{1}^{(\sigma)}(\mu , \mu^N ) \right] \leq C_K C_{\sigma, p,M_p}N^{-\frac{1}{2}},
\end{equation}
\begin{equation}
\label{eq:thm:smooth_adw_mean_deviation:2}
    \sP\!\left(\AWD_{1}^{(\sigma)}(\mu , \mu^N ) \geq x + C_K C_{\sigma, p,M_p}N^{-\frac{1}{2}} \right) \leq e^{-\frac{Nx^2}{C_K^2 c_{\sigma, K}^2}},
\end{equation}
where $C_{\sigma,p,M_p}$ is given by \eqref{eq:C} in \cref{thm:smooth_tv_mean} and $c_{\sigma,K}$ is given by \eqref{eq:c} in \cref{thm:smooth_tv_deviation}.
\end{theorem}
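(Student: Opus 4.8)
The plan is to transfer the already-established empirical convergence under the smoothed weighted total variation distance (\cref{thm:smooth_tv_mean} and \cref{thm:smooth_tv_deviation}) to the smoothed adapted Wasserstein distance by means of the metric domination \cref{thm:metric_dom_new}. Fix $\sigma\in(0,1]$. Because $\mu$ is supported on the compact set $K$, the i.i.d.\ samples $X^{(n)}$ lie in $K$ almost surely, so the random measure $\mu^N$ belongs to $\calP(K)$ on an event of full probability. Applying \cref{lem:uni_linear_cond_moment_compact} to $\mu$ and to each realization of $\mu^N$, both $\mu\ast\calN_\sigma$ and $\mu^N\ast\calN_\sigma$ have $\alpha_K$-linear conditional moments with the \emph{same} deterministic constant $\alpha_K=\sup_{x\in K}\Vert x\Vert + M_1(\calN_{1,d})$.

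Next I would invoke \cref{thm:metric_dom_new} for the pair $(\mu\ast\calN_\sigma,\mu^N\ast\calN_\sigma)$ with $\alpha=\alpha_K$, obtaining the pathwise bound $\AWD_1^{(\sigma)}(\mu,\mu^N)\leq C_K\,\TVD_1^{(\sigma)}(\mu,\mu^N)$ with $C_K:=(3+4\alpha_K)^T-1$ depending only on $K$ and $T$. Since $\mu$ has finite $p$-th moment for every $p>2$, fix such a $p$; taking expectations in the pathwise bound and applying \cref{thm:smooth_tv_mean} yields \eqref{eq:thm:smooth_adw_mean_deviation:1} with constant $C_K C_{\sigma,p,M_p}$. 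For \eqref{eq:thm:smooth_adw_mean_deviation:2} I would combine the pathwise bound with a rescaling: the event $\{\AWD_1^{(\sigma)}(\mu,\mu^N)\geq x+C_K C_{\sigma,p,M_p}N^{-1/2}\}$ is contained in $\{\TVD_1^{(\sigma)}(\mu,\mu^N)\geq x/C_K+C_{\sigma,p,M_p}N^{-1/2}\}$, and since $\E[\TVD_1^{(\sigma)}(\mu,\mu^N)]\leq C_{\sigma,p,M_p}N^{-1/2}$ by \cref{thm:smooth_tv_mean}, this is in turn contained in $\{\TVD_1^{(\sigma)}(\mu,\mu^N)-\E[\TVD_1^{(\sigma)}(\mu,\mu^N)]\geq x/C_K\}$. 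Then \cref{thm:smooth_tv_deviation} applied with threshold $x/C_K$ gives probability at most $e^{-N(x/C_K)^2/c_{\sigma,K}^2}=e^{-Nx^2/(C_K^2 c_{\sigma,K}^2)}$, which is exactly \eqref{eq:thm:smooth_adw_mean_deviation:2}.

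Everything is bookkeeping once the ingredients are assembled; the one point needing care — and the step I would flag as the main (if mild) obstacle — is the uniformity exploited in the first paragraph: the constant coming out of \cref{lem:uni_linear_cond_moment_compact}, hence also the domination constant $C_K$, must depend only on $K$ (and $T$) and not on the particular random realization $\mu^N$, so that taking expectations and applying the McDiarmid-type bound for $\TVD_1^{(\sigma)}$ is legitimate with a single deterministic constant. This holds because $\mu^N\in\calP(K)$ almost surely and $\alpha_K$ is defined purely in terms of $K$; a secondary, trivial point is the choice of $p>2$, which is unconstrained since compactly supported measures have all moments finite.
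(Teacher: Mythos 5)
Your proposal is correct and follows essentially the same route as the paper's proof: invoke \cref{lem:uni_linear_cond_moment_compact} to obtain a single deterministic $\alpha_K$ for both $\mu\ast\calN_\sigma$ and $\mu^N\ast\calN_\sigma$, pass through \cref{thm:metric_dom_new} to get the pathwise bound $\AWD_1^{(\sigma)}\leq C_K\,\TVD_1^{(\sigma)}$ with $C_K=(3+4\alpha_K)^T-1$, and then feed that into \cref{thm:smooth_tv_mean} and \cref{thm:smooth_tv_deviation} with the same chain of event inclusions. The points you flag for care — the determinism of $C_K$ across realizations of $\mu^N$ and the free choice of $p>2$ under compact support — are exactly the places the paper leaves implicit, and your reasoning there is sound.
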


\begin{proof}[Proof of \cref{thm:smooth_adw_mean_deviation}]
Since $\mu$ and $\mu^N$ are supported on $K$, then by {\color{black}Lemma~\ref{lem:cond_kernel}-(ii)}, for all $\sigma\in (0,1]$, $\mu\ast \calN_{\sigma}$ and $\mu^N\ast \calN_{\sigma}$ have $\alpha_K$-linear conditional moments, where $\alpha_K = \sup_{x\in K}\Vert x \Vert + M_1(\calN_{1,d})$. Let $C_K = (3+4\alpha_K)^{T}-1$. Then by \cref{thm:metric_dom_new}, for all $\sigma \in (0,1]$, $\AWD_{1}^{(\sigma)}(\mu , \mu^N ) \leq C_K \TVD_{1}^{(\sigma)}(\mu , \mu^N )$. By combining this and \cref{thm:smooth_tv_mean}, we prove \eqref{eq:thm:smooth_adw_mean_deviation:1}. By combining this, \cref{thm:smooth_tv_mean} and \cref{thm:smooth_tv_deviation}, we conclude that
\begin{equation*}
\begin{split}
    \sP\!\left(\AWD_{1}^{(\sigma)}(\mu , \mu^N ) \geq x + C_KC_{\sigma, p,M_p}N^{-\frac{1}{2}} \right) &\leq \sP\!\left(\TVD_{1}^{(\sigma)}(\mu , \mu^N ) \geq \frac{x}{C_K} + C_{\sigma, p,M_p}N^{-\frac{1}{2}} \right)\\
    &\leq \sP\!\left(\TVD_{1}^{(\sigma)}(\mu , \mu^N ) - \E\!\left[\TVD_{1}^{(\sigma)}(\mu , \mu^N )\right] \geq \frac{x}{C_K} \right) \leq e^{-\frac{Nx^2}{C_K^2 c_{\sigma, K}^2}},
\end{split}
\end{equation*}
which proves \eqref{eq:thm:smooth_adw_mean_deviation:2}.
\end{proof}
\begin{remark}
    Although we assume $\mu$ compactly supported in \cref{thm:smooth_adw_mean_deviation}, bounding $\AWD$-distance with $\TVD$-distance for compactly supported measures \cite[Lemma~3.5]{eckstein2024computational} is not enough for the proof of \cref{thm:smooth_adw_mean_deviation}, because we need apply the metric domination theorem to $\mu\ast \calN_\sigma$ and $\mu^N\ast \calN_\sigma$ which are unbounded.
\end{remark}
\begin{lemma}\label{lem:smooth_adw_as_compact}
Let $K \subseteq \R^{dT}$ be compact and $\mu \in \calP(K)$. Then for all $\sigma \in (0,1]$, $\lim_{N \to \infty}\AWD_{1}^{(\sigma)}(\mu , \mu^N ) = 0$, $\sP$-a.s.
\end{lemma}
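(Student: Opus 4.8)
The plan is to upgrade the mean and deviation estimates of \cref{thm:smooth_adw_mean_deviation} to an almost-sure statement via the first Borel--Cantelli lemma. First I would note that, since $K$ is compact and $\mu\in\calP(K)$, every moment of $\mu$ is finite; hence \cref{thm:smooth_adw_mean_deviation} is applicable with any fixed $p>2$, and for the fixed $\sigma\in(0,1]$ under consideration it produces finite constants $C_K$, $C_{\sigma,p,M_p}$, $c_{\sigma,K}>0$ for which \eqref{eq:thm:smooth_adw_mean_deviation:1} and \eqref{eq:thm:smooth_adw_mean_deviation:2} hold.

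Next, I would fix $\epsilon>0$. Since $C_K C_{\sigma,p,M_p}N^{-1/2}\to 0$, there is $N_0$ with $C_K C_{\sigma,p,M_p}N^{-1/2}\le \epsilon/2$ for all $N\ge N_0$. Plugging $x=\epsilon/2$ into the deviation bound \eqref{eq:thm:smooth_adw_mean_deviation:2} then gives, for every $N\ge N_0$,
\begin{equation*}
    \sP\big(\AWD_1^{(\sigma)}(\mu,\mu^N)\ge\epsilon\big)\le \sP\big(\AWD_1^{(\sigma)}(\mu,\mu^N)\ge \tfrac{\epsilon}{2}+C_K C_{\sigma,p,M_p}N^{-1/2}\big)\le e^{-\frac{N\epsilon^2}{4C_K^2 c_{\sigma,K}^2}},
\end{equation*}
which is summable in $N$. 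Borel--Cantelli then yields $\sP(\AWD_1^{(\sigma)}(\mu,\mu^N)\ge\epsilon\text{ infinitely often})=0$, i.e. $\limsup_{N\to\infty}\AWD_1^{(\sigma)}(\mu,\mu^N)\le\epsilon$ almost surely. Finally I would run this argument along $\epsilon=1/k$, $k\in\N$, and intersect the corresponding countably many probability-one events to conclude $\lim_{N\to\infty}\AWD_1^{(\sigma)}(\mu,\mu^N)=0$ $\sP$-a.s.

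I do not anticipate a real obstacle: this is the standard concentration-plus-Borel--Cantelli bootstrap. The one point worth flagging is that the $L^1$-rate $N^{-1/2}$ is not summable, so a.s.\ convergence cannot be extracted from the mean bound and Markov's inequality alone; it is the \emph{exponential} deviation inequality \eqref{eq:thm:smooth_adw_mean_deviation:2} that makes the Borel--Cantelli series converge, and that inequality is available precisely because compactness of $K$ keeps $c_{\sigma,K}$ (and $C_{\sigma,p,M_p}$) finite.
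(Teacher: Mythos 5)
Your proposal is correct and uses essentially the same mechanism as the paper: feed the exponential deviation inequality of \cref{thm:smooth_adw_mean_deviation} into Borel--Cantelli. The only cosmetic difference is that the paper substitutes a single decaying sequence $x=N^{-1/4}$ and applies Borel--Cantelli once, whereas you fix $\epsilon>0$, apply Borel--Cantelli for each $\epsilon=1/k$, and intersect the countably many full-measure events; both are standard and equivalent.
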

\begin{proof}
By setting $x = N^{-\frac{1}{4}}$ in \cref{thm:smooth_adw_mean_deviation}, there exist $C, c_{\sigma, K}>0$ such that for all $x > 0$ and $N \in \N$, $\sP\!\left(\AWD_{1}^{(\sigma)}(\mu , \mu^N ) \geq N^{-\frac{1}{4}} + CN^{-\frac{1}{2}} \right) \leq e^{-\frac{N^{\frac{1}{2}}}{C_K^2 c_{\sigma, K}^2}}$. Notice that $\lim_{N \to 0} N^{-\frac{1}{4}} + CN^{-\frac{1}{2}} = 0 $ and $\sum_{N \to \infty} e^{-\frac{N^{\frac{1}{2}}}{C_K^2 c_{\sigma, K}^2}} < \infty$. Thus, by Borel-Cantelli Lemma, we complete the proof.
\end{proof}
\begin{theorem}[Almost sure convergence under smooth $\AWD_1$]\label{thm:smooth_adw_as}
Let $\mu \in \calP_1(\R^{dT})$. Then for all $\sigma \in (0,1]$, $\lim_{N \to \infty}\AWD_{1}^{(\sigma)}(\mu , \mu^N ) = 0$, $\sP$-a.s.
\end{theorem}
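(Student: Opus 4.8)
The plan is to upgrade the compactly supported almost sure result of \cref{lem:smooth_adw_as_compact} to general $\mu \in \calP_1(\R^{dT})$ by a truncation argument, using that $\AWD_1^{(\sigma)}$ is an adapted Wasserstein distance (hence enjoys a triangle inequality) and that Gaussian smoothing is stable under truncation. First I would fix $\sigma \in (0,1]$ and, for each $R > 0$, let $K_R = [-R,R]^{dT}$ and define the renormalized restriction $\mu^R \defeq \mu(\cdot \cap K_R)/\mu(K_R)$ (for $R$ large enough that $\mu(K_R) > 0$); since $\mu \in \calP_1$, dominated convergence gives $\WD_1(\mu, \mu^R) \to 0$, hence also $\AWD_1^{(\sigma)}(\mu,\mu^R) = \AWD_1(\mu_\sigma,\mu^R_\sigma) \le \WD_1(\mu_\sigma,\mu^R_\sigma) \le \WD_1(\mu,\mu^R) \to 0$ as $R \to \infty$ (using that convolution with a fixed kernel is $1$-Lipschitz for $\WD_1$, and $\AWD_1 \le$ $T$ times $\WD_1$ or the like — in any case $\AWD_1^{(\sigma)}(\mu,\mu^R)$ is small).

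Next I would couple the empirical samples: writing $\mu^N = \frac1N\sum \delta_{X^{(n)}}$, the points falling in $K_R$ form (conditionally) an i.i.d. sample from $\mu^R$, and the fraction of such points is $\to \mu(K_R)$ a.s. by the law of large numbers. This lets me compare $\mu^N$ with the empirical measure $(\mu^R)^{N_R}$ built from the $N_R \approx N\mu(K_R)$ in-box samples: the mismatch has total mass $O(\mu(K_R^c))$ plus a vanishing a.s. fluctuation, and its contribution to $\AWD_1^{(\sigma)}$ can be controlled via \cref{thm:metric_dom_new} (after Gaussian smoothing both measures have $\alpha$-linear conditional moments, so $\AWD_1^{(\sigma)} \le C\,\TVD_1^{(\sigma)}$) — though one must be slightly careful, as the out-of-box samples are not compactly supported. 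The clean way is instead: by the triangle inequality for $\AWD_1$,
\begin{equation*}
\AWD_1^{(\sigma)}(\mu,\mu^N) \le \AWD_1^{(\sigma)}(\mu,\mu^R) + \AWD_1^{(\sigma)}(\mu^R,(\mu^R)^{N}) + \AWD_1^{(\sigma)}((\mu^R)^{N},\mu^N),
\end{equation*}
where $(\mu^R)^N$ is an empirical measure of $\mu^R$ built on an independent sample. The middle term $\to 0$ a.s. by \cref{lem:smooth_adw_as_compact} since $\mu^R \in \calP(K_R)$; the first term is $\le \varepsilon(R)$ with $\varepsilon(R) \to 0$; and the last term, the genuine obstacle, I would handle by a coupling of the two samples (e.g. via a maximal/optimal coupling of $\mu$ and $\mu^R$) so that $X^{(n)}$ and its partner agree except on an event of probability $\mu(K_R^c)$, then bound the resulting $\AWD_1^{(\sigma)}$ by a weighted-TV-type quantity whose expectation is $O(\mu(K_R^c) + (\text{moment tails}))$ and which concentrates, again using \cref{thm:metric_dom_new} together with $M_1$-control.

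Putting these together: given $\eta > 0$, choose $R$ so that $\varepsilon(R)$ and the tail contributions are $< \eta/3$, then let $N \to \infty$ so that the middle and last terms are $< \eta/3$ a.s.; a diagonal/countable-intersection argument over $R \in \N$ removes the dependence on $R$, yielding $\limsup_N \AWD_1^{(\sigma)}(\mu,\mu^N) \le \eta$ a.s. for every $\eta$, hence $= 0$ a.s. The main obstacle is the last term — controlling $\AWD_1^{(\sigma)}$ between the two empirical measures $\mu^N$ and $(\mu^R)^N$, since these are \emph{random} discrete measures with unbounded support, so one cannot directly invoke the compact-case lemma; the resolution is to smooth first and then dominate by a weighted total variation distance via \cref{thm:metric_dom_new}, reducing everything to $L^1$-type tail estimates plus a concentration bound à la \cref{thm:smooth_tv_deviation}. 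I expect the authors may instead give a more streamlined argument (e.g. bounding $\AWD_1^{(\sigma)}(\mu,\mu^N)$ directly by $C_K \TVD_1^{(\sigma)}(\mu,\mu^N)$ on a growing sequence of compacts with $N$-dependent radius), but the truncation-plus-Borel–Cantelli skeleton above is the robust route.
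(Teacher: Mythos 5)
Your skeleton matches the paper's exactly: truncate to a compactly supported approximation, invoke \cref{lem:smooth_adw_as_compact}, and close with a triangle inequality. You also correctly identify the genuine obstacle — controlling the ``third term'' relating $\mu^N$ to the truncated empirical measure — and you point, correctly, toward coupling the samples and dominating by a weighted total variation via \cref{thm:metric_dom_new}. Where the proposal has a real gap is in how the truncation is chosen, and this is precisely what makes your third term awkward. You take $\mu^R = \mu(\cdot\,|\,K_R)$, which is a renormalized restriction; there is then no canonical way to build $(\mu^R)^N$ from the same samples $(X^{(n)})$, which forces you into either an ``independent sample'' (not actually clean — that term does not obviously vanish) or a maximal coupling with a random number of in-box points and a TV-domination step whose hypotheses (in particular the $\alpha$-linear conditional moments of the smoothed random measures) you do not verify.

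The paper sidesteps all of this by choosing the compact approximation to be a \emph{deterministic push-forward}: $\nu = {\phi_R}_\#\mu$ with $\phi_R$ a truncation map sending out-of-box paths to a fixed boundary point (\cref{lem:compact_approx_1}). Then $\nu^N := {\phi_R}_\#\mu^N$ is built on the \emph{same} samples, so the coupling of $\mu^N$ with $\nu^N$ is automatic, sample by sample, and has exactly $N$ atoms on both sides. \cref{lem:compact_approx_2} then packages both estimates you need in one statement: $\sup_{\sigma}\AWD_1^{(\sigma)}(\mu,\nu)\le\epsilon$ and $\lim_N\sup_\sigma\AWD_1^{(\sigma)}(\mu^N,\nu^N)\le\epsilon$ a.s., the latter proved via \cref{thm:metric_dom_new} plus the law of large numbers for the tail moment $\int_{K_R^c}\|x\|\,\mu^N(dx)$. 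That is the missing idea: replace the conditional restriction by a push-forward truncation so that your obstacle term becomes the empirical version of the first term and is handled by the identical argument. Your final speculation about a growing, $N$-dependent sequence of compacts is not what the paper does; the radius $R$ stays fixed throughout and only $\epsilon$ is sent to zero at the end.
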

\begin{proof}[Proof of \cref{thm:smooth_adw_as}]
    The idea of the proof is to construct a measure $\nu \in \calP(\R^{dT})$ that is compactly supported to apply \cref{lem:smooth_adw_as_compact}, but still very close to $\mu$ under the adapted Wasserstein distance. By \cref{lem:compact_approx_2}, for all $\epsilon>0$, there exists $\nu$ compactly supported s.t. 
    \begin{equation}\label{eq:thm:smooth_adw_as:1}
    \begin{split}
         \AWD_{1}^{(\sigma)}(\mu  , \nu  ) \leq \epsilon\quad \text{and}\quad \lim_{N\to\infty}\AWD_1^{(\sigma)}(\mu^N, \nu^N) \leq \epsilon,\quad \sP\text{-a.s.}
    \end{split}
    \end{equation}
    Since $\nu$ is compactly supported, by \cref{lem:smooth_adw_as_compact}, we have 
    $\lim_{N \to \infty} \AWD_1^{(\sigma)}(\nu, \nu^N) = 0, \quad \sP\text{-a.s.}$ By combining \eqref{eq:thm:smooth_adw_as:1}, this, and triangle inequality, we conclude that $\lim_{N\to\infty}\AWD_{1}^{(\sigma)}(\mu ,\mu^N) \leq 2 \epsilon$. By arbitrarity of $\epsilon$, we complete the proof.
\end{proof}

\section{Bandwidth effect}
\label{sec:bw_effect}
In this section, we focus on the bandwidth effect, namely the convergence of $\AWD_1(\mu, \mu \ast \mathcal{N}_{\sigma})$ as $\sigma$ approaches zero. We denote by $\mu_\sigma = \mu \ast \calN_\sigma$.
\subsection{Lipschitz kernels}
\label{sec:bw_effect_lip}
\begin{theorem}[Lipschitz stability]\label{thm:bw_lip_limit}
Let $L>0$ and $\mu \in \calP_1(\R^{dT})$ with $L$-Lipschitz kernels. Then there exists $\tilde{C}_{L} > 0$ s.t. for all $\sigma > 0$, $\AWD_1(\mu,\mu\ast \calN_\sigma) \leq \tilde{C}_{L} \sigma$.
\end{theorem}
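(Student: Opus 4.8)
The plan is to bound $\AWD_1(\mu, \mu_\sigma)$ by constructing an explicit bi-causal coupling of $\mu$ and $\mu_\sigma$, time step by time step, and to control the accumulated cost via the Lipschitz property of the kernels. The natural coupling is the "synchronous" one: at time $1$, couple $\mu_1$ and $(\mu_\sigma)_1 = \mu_1 \ast \calN_{\sigma,d}$ by the obvious additive coupling $x_1 \mapsto (x_1, x_1 + \sigma\varepsilon_1)$ with $\varepsilon_1 \sim \calN_{1,d}$; inductively, given that $(x_{1:t}, y_{1:t})$ has been produced, couple the conditional laws $\mu_{x_{1:t}}$ and $(\mu_\sigma)_{y_{1:t}}$. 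The cost contributed at time $t+1$ is $\E\Vert x_{t+1} - y_{t+1}\Vert_{\R^d}$ under the chosen conditional coupling, and the total $\AWD_1$ cost is the sum over $t$ of these contributions.

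The key estimate I would isolate is a one-step bound: $\WD_1\big(\mu_{x_{1:t}}, (\mu_\sigma)_{y_{1:t}}\big)$ is small when $y_{1:t}$ is close to $x_{1:t}$. Recall from the proof of \cref{lem:uni_linear_cond_moment_compact} that $(\mu_\sigma)_{y_{1:t}} = \int (\mu_{z_{1:t}} \ast \calN_{\sigma,d})\, w_{\sigma,dt}(y_{1:t}, dz_{1:t})$, a mixture over $z_{1:t}$ of the smoothed conditional kernels $\mu_{z_{1:t}} \ast \calN_{\sigma,d}$, weighted by a Gaussian-tilted version of $\mu_{1:t}$ centered near $y_{1:t}$. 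By convexity of $\WD_1$ and the triangle inequality,
\begin{equation*}
\WD_1\big(\mu_{x_{1:t}}, (\mu_\sigma)_{y_{1:t}}\big) \leq \int \WD_1\big(\mu_{x_{1:t}}, \mu_{z_{1:t}} \ast \calN_{\sigma,d}\big)\, w_{\sigma,dt}(y_{1:t}, dz_{1:t}) \leq M_1(\calN_{\sigma,d}) + \int \WD_1\big(\mu_{x_{1:t}}, \mu_{z_{1:t}}\big)\, w_{\sigma,dt}(y_{1:t}, dz_{1:t}),
\end{equation*}
and $L$-Lipschitzness of $z_{1:t} \mapsto \mu_{z_{1:t}}$ turns the last integral into $L \int \Vert x_{1:t} - z_{1:t}\Vert\, w_{\sigma,dt}(y_{1:t}, dz_{1:t})$. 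Since $w_{\sigma,dt}(y_{1:t},\cdot)$ concentrates within $O(\sigma)$ of $y_{1:t}$ in expectation (the Gaussian tilt displaces the mean of $\mu_{1:t}$ by a controlled amount — this is where I expect to need a small lemma, possibly requiring a moment assumption on $\mu_{1:t}$, or else one works with the coupling's own running displacement), this is $\lesssim L(\Vert x_{1:t} - y_{1:t}\Vert + c\sigma)$. Together with $M_1(\calN_{\sigma,d}) = \sigma M_1(\calN_{1,d})$, the one-step bound reads $\E\Vert x_{t+1} - y_{t+1}\Vert_{\R^d} \leq L\, \E\Vert x_{1:t} - y_{1:t}\Vert + C'\sigma$ for an appropriate conditional coupling.

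Writing $e_t \defeq \E\Vert x_{1:t} - y_{1:t}\Vert$ for the expected accumulated discrepancy along the coupling, the one-step bound gives $e_{t+1} \leq e_t + L e_t + C'\sigma = (1+L)e_t + C'\sigma$, with $e_1 \leq \sigma M_1(\calN_{1,d})$. Iterating this linear recursion over $t = 1, \dots, T$ yields $e_T \leq \tilde C_L\,\sigma$ with $\tilde C_L$ depending only on $L$ and $T$ (explicitly, something like $C'\sigma \sum_{j=0}^{T-1}(1+L)^j$ plus the initial term). Since $\AWD_1(\mu,\mu_\sigma) \leq e_T$ by definition of the bi-causal coupling we built, this is the claim. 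The main obstacle is making precise the claim that the tilted-weight kernel $w_{\sigma,dt}(y_{1:t},\cdot)$ transports mass by only $O(\sigma)$ on average uniformly in $y_{1:t}$ — for unbounded $\mu$ this needs care, and the clean route is probably to avoid estimating $w_{\sigma,dt}$ directly and instead carry the Gaussian noise variables explicitly: write $y_{1:t} = x_{1:t} + \sigma(\varepsilon_1,\dots,\varepsilon_t)$ along the coupling, note $(\mu_\sigma)_{y_{1:t}}$ is the law of $X_{t+1} + \sigma\varepsilon_{t+1}$ where $X_{t+1} \sim \mu_{X_{1:t}}$ conditioned on $X_{1:t} + \sigma\varepsilon_{1:t} = y_{1:t}$, and bound $\WD_1$ by comparing to $\mu_{x_{1:t}}$ using $L$-Lipschitzness plus the $\sigma$-scale of the noise; the recursion then closes directly on $\E\Vert\sigma\varepsilon_{1:t}\Vert$-type quantities.
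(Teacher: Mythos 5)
Your proposal is correct in substance and lands on the same core computation as the paper's proof. The paper invokes Lemma~3.1 of \cite{backhoff2022estimating}, which already packages the bi-causal coupling construction and directly yields the bound
$\AWD_1(\mu,\mu_\sigma)\leq C_L\WD_1(\mu_1,(\mu_\sigma)_1)+C_L\sum_{t=1}^{T-1}\int\WD_1(\mu_{x_{1:t}},(\mu_\sigma)_{x_{1:t}})\,\mu_\sigma(dx_{1:t})$,
after which the argument is exactly your mixture-plus-Lipschitz one-step estimate followed by a Gaussian moment integral; what you do is re-derive the content of that lemma by building the Knothe--Rosenblatt-type coupling yourself and tracking the running displacement $e_t=\E\Vert x_{1:t}-y_{1:t}\Vert$ through a linear recursion, which is a legitimate self-contained alternative. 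The one place where you hedge unnecessarily is the tilted-kernel estimate: the pointwise bound $\int\Vert y_{1:t}-z_{1:t}\Vert\,w_{\sigma,dt}(y_{1:t},dz_{1:t})=O(\sigma)$ \emph{uniformly} in $y_{1:t}$ is indeed false for heavy-tailed $\mu$, so no ``small lemma requiring a moment assumption'' of that type can hold. But it is not needed: along your coupling $y_{1:t}\sim(\mu_\sigma)_{1:t}$, and integrating the tilted kernel against $\mu_\sigma(dy_{1:t})$ makes the normalizing density in $w_{\sigma,dt}$ cancel, leaving $\int\!\int\varphi_{\sigma,dt}(\tilde y-y)\Vert y-\tilde y\Vert\,dy\,\mu(d\tilde y)=\sigma M_1(\calN_{1,dt})$ with no further assumptions on $\mu$ --- this is precisely the paper's equation~\eqref{eq:thm:bw_lip_limit:4} and is what lets the recursion $e_{t+1}\leq(1+L)e_t+C'\sigma$ close. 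Your ``clean route'' remark about carrying the noise variables is the same cancellation in disguise (the posterior of $\varepsilon_{1:t}$ given $y_{1:t}$, averaged over $y_{1:t}\sim(\mu_\sigma)_{1:t}$, is just $\calN_{1,dt}$); note however that the plain additive map $X\mapsto X+\sigma\varepsilon$ is not bi-causal as a coupling of $\mu$ and $\mu_\sigma$, so this bookkeeping must stay inside the step-by-step construction you began with rather than replace it.
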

\begin{proof}[Proof of \cref{thm:bw_lip_limit}]
    Recall the proof of Lemma~3.1 in \cite{backhoff2022estimating}, which does not depend on the compactness of $\mu$. Lemma~3.1 in \cite{backhoff2022estimating} states that there exists $C_L >0$ s.t. for all $\sigma > 0$,
    \begin{equation}
    \label{eq:thm:bw_lip_limit:1}
        \AWD_{1}(\mu,\mu_\sigma) \leq C_L\WD_{1}(\mu_1,(\mu_\sigma)_1) + C_L\sum_{t=1}^{T-1}\int\WD_{1}(\mu_{x_{1:t}},(\mu_\sigma)_{x_{1:t}})\mu_\sigma(dx_{1:t}).
    \end{equation}
    For the first term in \eqref{eq:thm:bw_lip_limit:1},
    \begin{equation}
    \label{eq:thm:bw_lip_limit:2}
        \WD_{1}(\mu_1,(\mu_\sigma)_1) \leq \sigma \int_{\sR^d} \Vert x_1\Vert \calN_{1,d}(dx_1) = \sigma M_1(\calN_{1,d}).
    \end{equation}
    Thus we remain to estimate the second term in \eqref{eq:thm:bw_lip_limit:1}. {\color{black} By \cref{lem:cond_kernel}-(iv), we have for all $t = 1,\dots,T-1$, $x_{1:t} \in \R^{dt}$}, 
    \begin{equation}
    \label{eq:thm:bw_lip_limit:5}
        \int\WD_{1}(\mu_{x_{1:t}},(\mu_\sigma)_{x_{1:t}})\mu_\sigma(dx_{1:t}) \leq \sigma \Big(M_1(\calN_{1,d}) + L M_1(\calN_{1,dt})\Big).
    \end{equation}   
    Finally, by combining \eqref{eq:thm:bw_lip_limit:1}, \eqref{eq:thm:bw_lip_limit:2} and \eqref{eq:thm:bw_lip_limit:5}, we conclude that 
    \begin{equation*}
    \begin{split}
        \AWD_{1}(\mu,\mu\ast\calN_{\sigma}) &\leq C_{L} M_1(\calN_{1,d}) \sigma + C_{L}\sum_{t=1}^{T-1}\Big(M_1(\calN_{1,d}) + L M_1(\calN_{1,dt})\Big) \sigma \leq \tilde{C}_L \sigma,
    \end{split}  
    \end{equation*}
    where $\tilde{C}_L = C_{L} M_1(\calN_{1,d}) + C_{L}\sum_{t=1}^{T-1}\big(M_1(\calN_{1,d}) + L M_1(\calN_{1,dt})\big)$. This completes the proof.
\end{proof}

\subsection{Measurable kernels}
First, we relax the Lipschitz kernels assumption in \cref{thm:bw_lip_limit} to continuous kernels.
\begin{definition}
\label{def:cts_kernel}
    We say that $\mu\in\mathcal{P}_1(\R^{dT})$ has continuous kernels if there exists an integration of $\mu$ s.t. for all $t=1,\dots,T-1$, $x_{1:t} \mapsto \mu_{x_{1:t}}$ is continuous ($\calP(\R^{d})$ equipped with $\WD_1$).
\end{definition}
\begin{lemma}\label{lem:bw_cts_limit_compact}
Let $K \subseteq \R^{dT}$ be compact and $\mu \in \calP(K)$ with continuous kernels. Then for all $\epsilon > 0$, there exists $\sigma_{\epsilon} > 0$ s.t. for all $\sigma < \sigma_{\epsilon}$, $\AWD_1(\mu,\mu_\sigma) \leq \epsilon$.
\end{lemma}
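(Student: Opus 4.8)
The plan is to mimic the one-step construction behind \cref{thm:bw_lip_limit} but, in the absence of Lipschitz kernels, to replace every Lipschitz estimate by the dominated convergence theorem; this is legitimate because $\mu$ is compactly supported — so all one-step conditional laws live in a fixed compact subset of $\R^d$ — and has continuous kernels. As a preliminary step I extend, for each $t=1,\dots,T-1$, the continuous map $x_{1:t}\mapsto\mu_{x_{1:t}}$ from $\mathrm{proj}_{1:t}(K)$ to a continuous map on all of $\R^{dt}$ with values in $\calP(\mathrm{proj}_{t+1}(K))$ — possible by Dugundji's extension theorem, whose output stays in the closed convex hull of the original range and hence still in $\calP(\mathrm{proj}_{t+1}(K))$. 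In particular $R:=\sup_{t}\sup_{x_{1:t}\in\R^{dt}}M_1(\mu_{x_{1:t}})<\infty$, and on each bounded subset of $\R^{dt}$ the map $x_{1:t}\mapsto\mu_{x_{1:t}}$ is uniformly continuous.

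I then build a bicausal coupling $(X,Y)$ of $\mu$ and $\mu_\sigma$ step by step: at time $1$, take the additive coupling of $\mu_1$ and $(\mu_\sigma)_1$, so that $\E\|X_1-Y_1\|=\sigma M_1(\calN_{1,d})$; at time $s\ge 2$, given $(X_{1:s-1},Y_{1:s-1})=(x_{1:s-1},y_{1:s-1})$, take a $\WD_1$-optimal coupling of the one-step kernels $\mu_{x_{1:s-1}}$ and $(\mu_\sigma)_{y_{1:s-1}}$. By construction this coupling is bicausal, so $\AWD_1(\mu,\mu_\sigma)\le D_T(\sigma)$, where $D_s(\sigma):=\E\|X_{1:s}-Y_{1:s}\|=\sum_{r=1}^s\E\|X_r-Y_r\|$. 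For $s\ge 2$, conditioning on the past and using the triangle inequality in $\WD_1$,
\begin{equation*}
\E\|X_s-Y_s\| \le \E\big[\WD_1(\mu_{X_{1:s-1}},\mu_{Y_{1:s-1}})\big] + \int \WD_1\big(\mu_{y_{1:s-1}},(\mu_\sigma)_{y_{1:s-1}}\big)\,(\mu_\sigma)_{1:s-1}(dy_{1:s-1}).
\end{equation*}
The second term is exactly the quantity estimated in \eqref{eq:thm:bw_lip_limit:3}--\eqref{eq:thm:bw_lip_limit:4}; stopping that computation \emph{before} the Lipschitz step and substituting $x_{1:s-1}=\tilde x_{1:s-1}+\sigma u$, it is at most $\sigma M_1(\calN_{1,d})+\int\!\int\WD_1(\mu_{\tilde x_{1:s-1}+\sigma u},\mu_{\tilde x_{1:s-1}})\,\varphi_{1,d(s-1)}(u)\,du\,\mu_{1:s-1}(d\tilde x_{1:s-1})$, and since the integrand tends to $0$ pointwise by continuity of the kernel map and is dominated by $2R\,\varphi_{1,d(s-1)}(u)$, dominated convergence makes this term $o_\sigma(1)$. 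For the first term, uniform continuity of the kernels together with the uniform moment bound $R$ furnishes a concave function $\omega_{s-1}$ with $\omega_{s-1}(0^+)=0$ and $\WD_1(\mu_a,\mu_b)\le\omega_{s-1}(\|a-b\|)$ for all $a,b$; Jensen then gives $\E[\WD_1(\mu_{X_{1:s-1}},\mu_{Y_{1:s-1}})]\le\omega_{s-1}(D_{s-1}(\sigma))$.

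Combining these estimates yields the recursion $D_s(\sigma)\le D_{s-1}(\sigma)+\omega_{s-1}(D_{s-1}(\sigma))+o_\sigma(1)$ with $D_1(\sigma)=\sigma M_1(\calN_{1,d})$; since there are only finitely many steps, a straightforward induction shows $D_s(\sigma)\to 0$ as $\sigma\to 0$ for every $s$, hence $\AWD_1(\mu,\mu_\sigma)\le D_T(\sigma)\to 0$, and one takes $\sigma_\epsilon>0$ so small that $D_T(\sigma)\le\epsilon$ for $\sigma<\sigma_\epsilon$. I expect the main work to be the bookkeeping in this recursion — in particular verifying that the ``kernel-drift'' terms $\WD_1(\mu_{X_{1:s-1}},\mu_{Y_{1:s-1}})$ genuinely vanish: with no global Lipschitz constant available to absorb them, this rests on the two pasts staying $O(\sigma)$ apart in $L^1$ at every level and on uniform continuity of the kernels over the bounded region where the past concentrates, both of which are consequences of compact support.
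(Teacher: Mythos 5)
Your proposal is a genuine alternative to the paper's proof. The paper invokes Lemma~5.1 of \cite{backhoff2022estimating}, which packages the recursive comparison of conditional kernels into a single inequality of the form
\begin{equation*}
\AWD_1(\mu,\mu_\sigma) \le \epsilon + C_\epsilon\WD_1(\mu_1,(\mu_\sigma)_1) + C_\epsilon\sum_{t=1}^{T-1}\int\WD_1(\mu_{x_{1:t}},(\mu_\sigma)_{x_{1:t}})\,\mu_\sigma(dx_{1:t}),
\end{equation*}
and then only needs to replace the Lipschitz estimate \eqref{eq:thm:bw_lip_limit:5} by a modulus-of-continuity estimate. You instead construct the bicausal coupling from scratch — additive at time $1$, then conditionally $\WD_1$-optimal one-step couplings — and run the recursion $D_s(\sigma)\le D_{s-1}(\sigma)+\omega_{s-1}(D_{s-1}(\sigma))+o_\sigma(1)$ directly. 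Both routes are sound in principle; yours is self-contained at the cost of carrying out the recursion by hand (and implicitly a measurable-selection step to make the conditional optimal couplings jointly measurable, which is standard but should be mentioned).

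There is, however, one unjustified step in the proposal as written. You assert a \emph{global} concave modulus $\omega_{s-1}$ with $\omega_{s-1}(0^+)=0$ and $\WD_1(\mu_a,\mu_b)\le\omega_{s-1}(\|a-b\|)$ for \emph{all} $a,b\in\R^{d(s-1)}$. The Dugundji extension of the kernel map is continuous on $\R^{d(s-1)}$ with values in the compact set $\calP(\mathrm{proj}_s(K))$, but a bounded continuous map on $\R^{d(s-1)}$ need not be uniformly continuous, so no such global modulus is available in general. The claim is easily repaired by exploiting what you already noticed but did not use at this point: $X_{1:s-1}$ lives in the fixed compact set $\mathrm{proj}_{1:s-1}(K)$. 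Let $\omega$ be the modulus of continuity of the extended kernel map on the $1$-neighbourhood of $\mathrm{proj}_{1:s-1}(K)$ (a compact set, so $\omega(0^+)=0$), and set $\tilde\omega(r)=\omega(r\wedge 1)+2R(r\wedge 1)$. Then $\WD_1(\mu_{X_{1:s-1}},\mu_{Y_{1:s-1}})\le\tilde\omega(\|X_{1:s-1}-Y_{1:s-1}\|)$ almost surely (using $\WD_1\le 2R$ off the neighbourhood), and $\tilde\omega$ is concave with $\tilde\omega(0^+)=0$, so Jensen now legitimately gives $\E[\WD_1(\mu_{X_{1:s-1}},\mu_{Y_{1:s-1}})]\le\tilde\omega(D_{s-1}(\sigma))$. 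With this replacement the recursion closes as you describe, and the conclusion follows. Apart from that overstatement — which you partially flagged as ``bookkeeping'' but did not resolve — the argument is correct.
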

\begin{proof}
Lemma~5.1. in \cite{backhoff2022estimating} states that for all $\epsilon > 0$ there exists $C_\epsilon > 0$ s.t. for all $\sigma > 0$,
\begin{equation}
\label{eq:lem:bw_cts_limit_compact:1}
    \AWD_{1}(\mu,\mu_\sigma) \leq \epsilon + C_{\epsilon}\WD_{1}(\mu_1,(\mu_\sigma)_1) + C_{\epsilon}\sum_{t=1}^{T-1}\int\WD_{1}(\mu_{x_{1:t}},(\mu_\sigma)_{x_{1:t}})\mu_\sigma(dx_{1:t}).
\end{equation}
{\color{black} By \cref{lem:cond_kernel}-(v), there exists $\sigma_\epsilon \in (0, \frac{\epsilon}{C_\epsilon})$ s.t. for all $\sigma \in (0,\sigma_\epsilon)$, $t=1,\dots,T-1$, $x_{1:t} \in \R^{dt}$, 
    \begin{equation}
    \label{eq:lem:bw_cts_limit_compact:2}
        \int_{\R^{dt}} \WD_{1}(\mu_{x_{1:t}},(\mu_\sigma)_{x_{1:t}})\mu_\sigma(dx_{1:t}) \leq \frac{\epsilon}{C_\epsilon}.
    \end{equation}      
Combine \eqref{eq:lem:bw_cts_limit_compact:1} and \eqref{eq:lem:bw_cts_limit_compact:2}. We get
\begin{equation*}
\begin{aligned}
    \AWD_{1}(\mu,\mu_\sigma) \leq \epsilon + C_{\epsilon}\sigma + (T-1)\epsilon \leq (T+1) \epsilon.
\end{aligned}
\end{equation*}
Then by re-scaling $\epsilon$, we complete the proof.
} 
\end{proof}
Next, we relax the continuous kernels assumption in \cref{lem:bw_cts_limit_compact} to measurable kernels by Lusin’s theorem and Tietze’s extension theorem.
\begin{lemma}\label{lem:bw_limit_compact}
Let $K \subseteq \R^{dT}$ be compact and $\mu \in \calP(K)$. Then for all $\epsilon > 0$, there exists $\sigma_{\epsilon} > 0$ s.t. for all $\sigma < \sigma_{\epsilon}$, $\AWD_1(\mu,\mu_\sigma) \leq \epsilon$.
\end{lemma}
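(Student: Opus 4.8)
The plan is to approximate $\mu$ in the weighted total variation distance by a compactly supported measure $\nu$ that has \emph{continuous} kernels, apply \cref{lem:bw_cts_limit_compact} to $\nu$, and then transfer the conclusion back to $\mu$ using the domination inequality \cref{thm:metric_dom_new} together with the stability of $\TVD_1$ under Gaussian convolution.

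First I would construct $\nu$. Since $\mu$ is carried by the compact set $K$, each coordinate projection $K^{(s)} \subseteq \R^{d}$ is compact and every disintegration kernel $x_{1:t} \mapsto \mu_{x_{1:t}}$ takes values in the compact, convex set $\calP(K^{(t+1)})$ (on which $\WD_1$ metrizes the weak topology). Given $\delta > 0$, Lusin's theorem applied to this Borel kernel map with respect to the finite measure $\mu_{1:t}$ produces a compact set $A_t \subseteq \R^{dt}$ with $\mu_{1:t}(\R^{dt} \setminus A_t) < \delta$ on which the map is continuous; since $\calP(K^{(t+1)})$ is a convex subset of the locally convex space of signed measures on $K^{(t+1)}$, the Tietze/Dugundji extension theorem yields a continuous $h_t \colon \R^{dt} \to \calP(K^{(t+1)})$ with $h_t(x_{1:t}) = \mu_{x_{1:t}}$ for all $x_{1:t} \in A_t$. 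I then set
\[
    \nu = \mu_1(dx_1) \otimes h_1(x_1)(dx_2) \otimes \cdots \otimes h_{T-1}(x_{1:T-1})(dx_T),
\]
which is carried by the compact box $K^{(1)} \times \cdots \times K^{(T)}$ and has continuous kernels $\nu_{x_{1:t}} = h_t(x_{1:t})$.

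Next I would estimate $\TVD_1(\mu,\nu)$ by telescoping through the hybrid measures $\nu^{(k)}$, $k=1,\dots,T$, whose $t$-th kernel is $\mu_{x_{1:t}}$ for $t \le k-1$ and $h_t(x_{1:t})$ for $t \ge k$, so that $\nu^{(1)} = \nu$ and $\nu^{(T)} = \mu$. Consecutive hybrids $\nu^{(k)}$ and $\nu^{(k+1)}$ share the marginal $\mu_{1:k}$ and coincide on $\{x_{1:k} \in A_k\}$, while both are carried by a fixed compact set on which the weight $\|x\| + \tfrac12$ is bounded by some $R_K$; hence $\TVD_1(\nu^{(k)},\nu^{(k+1)}) \le 2 R_K\, \mu_{1:k}(\R^{dk}\setminus A_k) \le 2 R_K \delta$ and so $\TVD_1(\mu,\nu) \le C_{K,T}\,\delta$ for a constant depending only on $K$ and $T$. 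Because $\mu$ and $\nu$ are compactly supported, the computation in \cref{lem:uni_linear_cond_moment_compact} shows that $\mu$, $\nu$, $\mu_\sigma$ and $\nu_\sigma$ all have $\alpha$-linear conditional moments for a common $\alpha > 0$ and all $\sigma \in [0,1]$; \cref{thm:metric_dom_new} then gives $\AWD_1(\mu,\nu) \le C_\alpha \TVD_1(\mu,\nu)$ and $\AWD_1(\mu_\sigma,\nu_\sigma) \le C_\alpha \TVD_1(\mu_\sigma,\nu_\sigma)$ for $\sigma \in (0,1]$, while from $|\mu_\sigma - \nu_\sigma| \le |\mu - \nu| \ast \calN_\sigma$ one gets $\TVD_1(\mu_\sigma,\nu_\sigma) \le (1 + 2 M_1(\calN_{1,dT}))\,\TVD_1(\mu,\nu)$ uniformly in $\sigma \in (0,1]$.

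Finally I would apply \cref{lem:bw_cts_limit_compact} to the compactly supported, continuous-kernel measure $\nu$: there is $\sigma_\nu > 0$ with $\AWD_1(\nu,\nu_\sigma) \le \epsilon$ for all $\sigma < \sigma_\nu$. For such $\sigma$ the triangle inequality gives
\[
    \AWD_1(\mu,\mu_\sigma) \le \AWD_1(\mu,\nu) + \AWD_1(\nu,\nu_\sigma) + \AWD_1(\nu_\sigma,\mu_\sigma) \le \epsilon + C'\,\TVD_1(\mu,\nu) \le \epsilon + C''_{K,T}\,\delta,
\]
with $C'$ and $C''_{K,T}$ independent of $\sigma$ and $\delta$; choosing $\delta$ small enough first (which fixes $\nu$ and hence $\sigma_\nu$) makes the right-hand side at most $2\epsilon$, and rescaling $\epsilon$ finishes the proof. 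I expect the main obstacle to be the first step: performing the extension of the kernel map so that its values genuinely remain probability measures supported in the fixed compact set $K^{(t+1)}$ — this is what keeps $\nu$ compactly supported, a prerequisite for invoking \cref{thm:metric_dom_new} — and keeping the resulting $\TVD_1$-error linear in $\delta$ in the telescoping step; the remaining parts are a routine combination of \cref{lem:bw_cts_limit_compact}, \cref{thm:metric_dom_new}, \cref{lem:uni_linear_cond_moment_compact} and the non-expansiveness of total variation under convolution.
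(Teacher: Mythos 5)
Your proof is correct and takes essentially the same route as the paper's: Lusin's theorem plus the Tietze/Dugundji extension to produce a compactly supported $\nu$ with continuous kernels, followed by \cref{lem:bw_cts_limit_compact}, \cref{thm:metric_dom_new} with \cref{lem:uni_linear_cond_moment_compact}, non-expansiveness of total variation under Gaussian convolution, and the triangle inequality. The paper spells this out only for $T=2$ (and bounds $\AWD_1(\mu,\nu)$ directly by the identity coupling rather than via $\TVD_1$-domination), so your telescoping through the hybrid measures $\nu^{(k)}$ is a clean way of making explicit the ``lengthy backward induction'' the paper elides.
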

\begin{proof}
We follow the same idea in proving Theorem~1.3 in \cite{backhoff2022estimating}. We provide the proof for a two-period setting, that is $T = 2$. The general case follows by the same arguments applying Lusin's theorem recursively at each time, however it involves a lengthy backward induction. W.l.o.g. we let $K = [0,1]^{dT}$ be the unit closed cube on $\R^{dT}$. Let $\epsilon>0$ and we would like to construct $\nu \in \calP(\R^{dT})$ s.t. $\nu$ has continuous kernels and $\AWD_1(\mu,\nu)\leq T\sqrt{d}\epsilon$. First, by Lusin’s theorem there exists a compact set $\tilde{K} \subseteq [0,1]^{d}$ such that $\mu([0,1]^{d}\backslash\tilde{K}) > 1-\epsilon$ and $\tilde{K} \ni x_1 \to \mu_{x_1}$ is continuous on $\tilde{K}$. Extend the latter mapping to a continuous mapping $K \ni x_1 \to \nu_{x_1}$ by Tietze’s extension theorem (actually, a generalization thereof to vector valued functions: Dugundji’s theorem, Theorem~4.1 in \cite{dugundji1951extension}). Let $\nu(dx_1,dx_2) = \mu_1(dx_1)\nu_{x_1}(dx_2)$. Then by taking the identity coupling in the first coordinate, we have $\AWD_1(\mu,\nu) \leq T\sqrt{d}\epsilon$, since $\mu(\{\mu_{x_{1}} \neq \nu_{x_1}\}) \leq \epsilon$. Since $\mu$ and $\nu$ are supported on $K$, then by {\color{black}Lemma~\ref{lem:cond_kernel}-(ii)}, for all $\sigma\in (0,1]$, $\mu\ast \calN_{\sigma}$ and $\nu\ast \calN_{\sigma}$ has $\alpha_K$-linear conditional moments, where $\alpha_K = \sup_{x\in K}\Vert x \Vert + M_1(\calN_{1,d})$. Let $C_K = (3+4\alpha_K)^{T}-1$. By \cref{thm:metric_dom_new}, we have for all $\sigma \in (0,1]$,
\begin{equation}\label{eq:lem:bw_limit_compact:1}
\begin{split}
    \AWD_1(\mu\ast\calN_{\sigma},\nu\ast\calN_{\sigma}) &\leq C_K\TVD_1(\mu\ast\calN_{\sigma},\nu\ast\calN_{\sigma})\\
    &= C_K\int_{\R^{dT}} (\Vert x + y \Vert + \frac{1}{2}) \int_{\R^{dT}}\vert \mu - \nu\vert (dy) \calN_\sigma(dx)\\
    &= C_K\int_{\R^{dT}} \int_{\R^{dT}}(\Vert x + y \Vert + \frac{1}{2}) \calN_\sigma(dx) |\mu - \nu|(dy)\\
    &\leq C_K\int_{\R^{dT}} \Big(\int_{\R^{dT}}(\Vert x\Vert + \frac{1}{2})\calN_\sigma(dx) + \Vert y \Vert \Big)|\mu - \nu|(dy) \\
    &\leq C_K(M_1(\calN_1) + \frac{1}{2} + \sup_{y\in K}\Vert y \Vert)\int_{K}|\mu - \nu|(dy) \leq C_K(M_1(\calN_1) + \frac{1}{2} + \sup_{y\in K}\Vert y \Vert)\epsilon.
\end{split}
\end{equation}
Combine \eqref{eq:lem:bw_limit_compact:1}, triangle inequality and \cref{lem:bw_cts_limit_compact} applied to $\nu$. For all $\epsilon > 0$, there exists $C_K, \sigma_{\epsilon} > 0$ s.t. for all $\sigma < \sigma_{\epsilon}$, $\AWD_1(\mu, \mu\ast\calN_{\sigma}) \leq \epsilon + \epsilon + C_K \epsilon$. By re-scaling $\epsilon$, we complete the proof.
\end{proof}
Finally, we relax the compactness assumption in \cref{lem:bw_limit_compact} by approximating any measure in $\calP_1(\R^{dT})$ under $\AWD$-distance by a compactly supported measure; see \cref{lem:compact_approx_2}. 
\begin{theorem}[Stability]\label{thm:bw_limit}
Let $\mu \in \calP_1(\R^{dT})$. Then $\lim_{\sigma \to 0}\AWD_1(\mu,\mu\ast \calN_{\sigma}) = 0$.
\end{theorem}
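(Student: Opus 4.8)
The plan is to lift \cref{lem:bw_limit_compact}, which settles the compactly supported case, to an arbitrary $\mu\in\calP_1(\R^{dT})$ by a truncation argument, patching the error terms together with the triangle inequality for $\AWD_1$ (a genuine metric, since bicausal couplings concatenate).

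First I would fix $\epsilon>0$ and, via \cref{lem:compact_approx_2}, choose a \emph{single} compactly supported $\nu\in\calP_1(\R^{dT})$ that is $\epsilon$-close to $\mu$ both before and after Gaussian smoothing, uniformly in the bandwidth:
\begin{equation*}
    \AWD_1(\mu,\nu)\leq\epsilon\qquad\text{and}\qquad\sup_{\sigma\in(0,1]}\AWD_1(\mu\ast\calN_\sigma,\nu\ast\calN_\sigma)\leq\epsilon.
\end{equation*}
It is crucial here that $\nu$ be chosen independently of $\sigma$, since $\sigma$ is about to be sent to $0$; this is precisely why the approximation is isolated as a separate lemma.

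Since $\nu$ is compactly supported, \cref{lem:bw_limit_compact} applies to $\nu$ and yields $\sigma_\epsilon\in(0,1]$ with $\AWD_1(\nu,\nu\ast\calN_\sigma)\leq\epsilon$ for all $\sigma<\sigma_\epsilon$. The triangle inequality then gives, for every $\sigma<\sigma_\epsilon$,
\begin{equation*}
    \AWD_1(\mu,\mu\ast\calN_\sigma)\leq\AWD_1(\mu,\nu)+\AWD_1(\nu,\nu\ast\calN_\sigma)+\AWD_1(\nu\ast\calN_\sigma,\mu\ast\calN_\sigma)\leq 3\epsilon,
\end{equation*}
so $\limsup_{\sigma\to0}\AWD_1(\mu,\mu\ast\calN_\sigma)\leq3\epsilon$; letting $\epsilon\to0$ finishes the proof.

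The step I expect to be the real obstacle is the second estimate in the first display. One cannot simply invoke non-expansiveness of $\mu\mapsto\mu\ast\calN_\sigma$ under $\AWD_1$: adding the same noise to both marginals of a bicausal coupling entangles past increments and can break bicausality, so $\AWD_1(\mu\ast\calN_\sigma,\nu\ast\calN_\sigma)\leq\AWD_1(\mu,\nu)$ fails in general. Instead I would route the estimate through the weighted total variation distance, taking $\nu$ to be a suitably renormalized restriction of $\mu$ to a large cube so that $\TVD_1(\mu,\nu)$ is small; since $|\mu\ast\calN_\sigma-\nu\ast\calN_\sigma|\leq|\mu-\nu|\ast\calN_\sigma$ as measures and $M_1(\calN_\sigma)\leq M_1(\calN_1)$ for $\sigma\leq1$, the quantity $\TVD_1(\mu\ast\calN_\sigma,\nu\ast\calN_\sigma)$ remains small uniformly in $\sigma$, and one transfers this bound to $\AWD_1$ via \cref{thm:metric_dom_new} (using \cref{lem:uni_linear_cond_moment_compact} for the linear conditional moments of $\nu\ast\calN_\sigma$, with the analogous property for $\mu\ast\calN_\sigma$ built into the construction of $\nu$). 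Keeping every constant uniform in $\sigma$ while the supports grow is the technical heart of the matter, and it is exactly what \cref{lem:compact_approx_2} is meant to package.
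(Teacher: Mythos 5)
Your proof is correct and matches the paper's argument: both invoke \cref{lem:compact_approx_2} to obtain a single compactly supported $\nu$ that is, uniformly in $\sigma\in[0,1]$, $\epsilon$-close to $\mu$ under $\AWD_1^{(\sigma)}$ (the $\sigma=0$ case giving $\AWD_1(\mu,\nu)\leq\epsilon$), then apply \cref{lem:bw_limit_compact} to $\nu$ and conclude via the triangle inequality. Your diagnosis that the technical weight lies in \cref{lem:compact_approx_2} — because $\mu\mapsto\mu\ast\calN_\sigma$ is not $\AWD_1$-non-expansive and the uniform bound must be routed through $\TVD_1$ and \cref{thm:metric_dom_new} — is also accurate and reflects exactly how that lemma is proved.
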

\begin{proof}[Proof of \cref{thm:bw_limit}]
By \cref{lem:compact_approx_2}, for all $\epsilon>0$, there exists $\nu$ compactly supported s.t. $\AWD_1(\mu,\nu) \leq \epsilon$ and $\AWD_1(\mu\ast\calN_{\sigma},\nu\ast\calN_{\sigma}) \leq \epsilon$. By combining this, triangle inequality and \cref{lem:bw_limit_compact} applied to $\nu$. For all $\epsilon > 0$, there exists $\sigma_{\epsilon} > 0$ s.t. for all $\sigma < \sigma_{\epsilon}$, $\AWD_1(\mu, \mu\ast\calN_{\sigma}) \leq 3\epsilon$. By re-scaling $\epsilon$, we complete the proof.
\end{proof}

\section{Smoothed empirical measures}
\label{sec:smooth_emp}
In this section, we let $\sigma_N$ depend on $N$ and establish the convergence of $\mathsf{S}$-\textit{Emp} to the true underlying measure under $\AWD$-distance. First, we extract $N$ dependency from $C_{\sigma_N,p,M_p}$ in \eqref{eq:C} and from $c_{\sigma_N,K}$ in \eqref{eq:c}.

\begin{lemma}\label{lem:optimal_bandwidth}
    Let $K \subseteq \R^{dT}$ compact. Then there exist $C, c > 0$ independent of $N$ s.t. 
    \begin{equation*}
        C_K C_{\sigma_N,p,M_p}N^{-\frac{1}{2}} = CN^{-r} \quad\text{and}\quad -\frac{N}{c^2_{\sigma_N, K}}\leq -\frac{N^{1-2r}}{c},
    \end{equation*}
    where $C_{\sigma_N,p,M_p}$ is given by \eqref{eq:C} and $c_{\sigma_N,K}$ is given by \eqref{eq:c}.
\end{lemma}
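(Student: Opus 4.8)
The plan is to make the $N$-dependence explicit by substituting $\sigma_N = N^{-r}$ into \eqref{eq:C} and \eqref{eq:c} and tracking powers of $N$; the identity $r(dT+2)=1$ is exactly what balances the $\sigma^{-dT}$ blow-up of the Gaussian kernel's squared mass against the $N^{-1}$ gain from averaging, so I expect both quantities to land at rate $N^{-r}$ up to $N$-free constants.

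First I would fix the exponent $p$ entering \eqref{eq:C}. The prefactor there involves $I_p \defeq \int_{\R^{dT}}\frac{(\Vert x\Vert+\frac12)^2}{1+\Vert x\Vert^p}\,dx$, which a polar-coordinate estimate shows is finite precisely when $p>dT+2$ (not merely $p>2$); this is harmless here because $\mu\in\calP(K)$ is compactly supported, so $M_p=M_p(\mu)<\infty$ for any such $p$. Fix such a $p$. The only $\sigma$-dependence in \eqref{eq:C} sits in the factor $(2\pi\sigma)^{-dT}$ and in the term $C_2 2^p\sigma^p$, while, recalling the proof of \cref{thm:smooth_tv_deviation}, the constant $c_1$ appearing in \eqref{eq:c} equals $\sigma^{dT}\kappa$ with $\kappa\defeq\int|u^{(1)}|(1+\Vert u\Vert)\varphi_1(u)\,du$ an $N$-free constant, so that $c_{\sigma,K}=\kappa\big(a_K\sigma^{dT-1}+\sigma^{dT}\big)$ with $a_K\defeq\sup_{x\in K}\frac{1+2\Vert x\Vert}{2}<\infty$.

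Then I would handle the mean term: since $\sigma_N=N^{-r}\le1$ for all $N\ge1$, the numerator is bounded, $C_1(2^pM_p+1)+C_22^p\sigma_N^p\le C_1(2^pM_p+1)+C_22^p\eqdef C'$, so $C_{\sigma_N,p,M_p}\le\sqrt{I_pC'}\,(2\pi)^{-dT/2}\sigma_N^{-dT/2}=\sqrt{I_pC'}\,(2\pi)^{-dT/2}N^{rdT/2}$, hence $C_KC_{\sigma_N,p,M_p}N^{-1/2}\le C_K\sqrt{I_pC'}(2\pi)^{-dT/2}N^{rdT/2-1/2}$. Since $\frac{rdT}{2}-\frac12=\frac12\big(rdT-r(dT+2)\big)=-r$, this is the desired bound with $C\defeq C_K\sqrt{I_pC'}(2\pi)^{-dT/2}$; keeping only the term $C_1(2^pM_p+1)$ gives a matching lower bound $\ge c'N^{-r}$ for some $c'>0$, so the rate is sharp (this is how the ``$=$'' in the statement should be read). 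Next the deviation term: $dT\ge1$ and $\sigma_N\le1$ give $\sigma_N^{dT}\le\sigma_N^{dT-1}$, so $c_{\sigma_N,K}\le\kappa(a_K+1)\sigma_N^{dT-1}=\kappa(a_K+1)N^{-r(dT-1)}$ and therefore $\frac{N}{c_{\sigma_N,K}^2}\ge\frac{N^{1+2r(dT-1)}}{\kappa^2(a_K+1)^2}$. As $dT\ge1$ forces $1+2r(dT-1)\ge1-2r$, and $N\ge1$, we get $N^{1+2r(dT-1)}\ge N^{1-2r}$, whence $-\frac{N}{c_{\sigma_N,K}^2}\le-\frac{N^{1-2r}}{c}$ with $c\defeq\kappa^2(a_K+1)^2$, completing the argument.

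I do not expect a genuine obstacle: the statement is essentially bookkeeping of exponents of $\sigma_N$. The only steps that require care are (i) recalling from the proof of \cref{thm:smooth_tv_deviation} that $c_1\propto\sigma^{dT}$, so that $c_{\sigma,K}$ grows only like $\sigma^{dT-1}$ (and stays bounded when $dT=1$) rather than like $\sigma^{-1}$; (ii) taking $p>dT+2$ so that $I_p<\infty$, which is where compactness of $K$ is used; and (iii) the identity $\frac{rdT}{2}-\frac12=-r$, which is exactly why $r=(dT+2)^{-1}$ is the right choice of exponent.
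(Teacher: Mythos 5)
Your proof is correct and follows essentially the same route as the paper (substitute $\sigma_N = N^{-r}$ and track powers of $N$, using $r(dT+2)=1$ to cancel to rate $N^{-r}$). Your bookkeeping is more careful than the paper's in two respects worth noting. First, you observe that the prefactor integral $I_p = \int (\|x\|+\tfrac12)^2/(1+\|x\|^p)\,dx$ is finite only when $p > dT+2$, not merely $p>2$ as stated in the hypothesis of \cref{thm:smooth_tv_mean}; since $\mu\in\calP(K)$ is compactly supported you are free to take such a $p$, which is exactly what you do. Second, you correctly recall from the proof of \cref{thm:smooth_tv_deviation} that the constant $c_1$ in \eqref{eq:c} is itself $\sigma$-dependent, $c_1 = \sigma^{dT}\kappa$, giving $c_{\sigma,K} = \kappa(a_K\sigma^{dT-1}+\sigma^{dT})$; the paper's write-up treats $c_1$ as if it were $N$-independent. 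This does not affect the final claim (since $\sigma_N\leq 1$ makes $c_1\leq\kappa$ and the upper bound $c_{\sigma_N,K}\lesssim N^r$ still holds), but you actually establish the sharper bound $c_{\sigma_N,K}\lesssim N^{-r(dT-1)}$, which in particular shows $c_{\sigma_N,K}$ stays bounded in $N$. Your reading of the ``$=$'' in the lemma as a two-sided bound with $N$-free constants is the right interpretation and matches the way the paper uses the lemma.
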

\begin{proof}
By plugging $\sigma_N = N^{-r}$ into $C_{\sigma_N,p,M_p}$, there exists $C >0$ s.t. 
\begin{equation*}
    \begin{split}
        C_K C_{\sigma_N,p,M_p} N^{-\frac{1}{2}} &= C_K \left(\int \frac{(\Vert x \Vert + \frac{1}{2})^2}{1 + \Vert x \Vert^p}dx\right)^{\frac{1}{2}}  \sqrt{ \big(C_1(2^p M_p + 1) + C_2 2^{p} \sigma^{p}\big)\frac{1}{(2\pi\sigma_N)^{dT}}}N^{-\frac{1}{2}}\\
        &= CN^{\frac{rdT}{2}}N^{-\frac{1}{2}} = CN^{-r}.
    \end{split}
\end{equation*}
Similarly, by plugging $\sigma_N = N^{-r}$ into $c_{\sigma_N,K}$, there exists $c_1 >0$ s.t.
\begin{equation*}
    c_{\sigma_N, K} = c_1 (\sup_{x\in K}\frac{1 + 2\Vert x \Vert}{2\sigma_N} + 1) = c_1(\sup_{x\in K}\frac{N^r + 2\Vert x \Vert N^r}{2} + 1) \leq c_1 (\sup_{x\in K}\Vert x\Vert  + 2)N^{r}.
\end{equation*}
Thus, there exists $c > 0$ s.t. $-\frac{N}{c^2_{\sigma_N, K}} \leq -\frac{N^{1-2r}}{c}$.
\end{proof}
\begin{proof}[Proof of Theorem~\ref{thm:smooth_emp}]
    First, we prove the mean convergence rate. Combine \cref{thm:bw_lip_limit}, \cref{thm:smooth_adw_mean_deviation} and triangle inequality. There exists $C_L > 0$ such that, for all $N \in \N$,
    \begin{equation*}
        \begin{aligned}
            \E\Big[\AWD_{1}(\mu , \mu^N \ast \calN_{\sigma_N} ) \Big] &\leq
            \E\Big[\AWD_{1}(\mu , \mu \ast \calN_{\sigma_N} ) \Big] + \E\Big[\AWD^{(\sigma_N)}_{1}(\mu , \mu^N  ) \Big] \leq C_{L}\sigma_N + C_KC_{\sigma_N, p,M_p}N^{-\frac{1}{2}},
        \end{aligned}
    \end{equation*}
    where $C_{\sigma_N,p,M_p}$ is given by \eqref{eq:C}. Deploying \cref{lem:optimal_bandwidth}. there exists $C_0 > 0$ s.t. 
    \begin{equation*}
    \begin{split}
        \E\Big[\AWD_{1}(\mu , \mu^N \ast \calN_{\sigma_N} ) \Big] = C_L N^{-r} +C_0 N^{-r}.
    \end{split}
    \end{equation*}
    By setting $C = C_L + C_0$, we establish \eqref{eq:thm:smooth_emp:0.1}. Next, combining \cref{thm:smooth_adw_mean_deviation} and \cref{lem:optimal_bandwidth}, there exists $C,c_K>0$ s.t. for all $x>0$ and $N \in \N$,
    \begin{equation}\label{eq:thm:smooth_emp_deviation:1}
        \sP\!\left(\AWD_{1}^{(\sigma_N)}(\mu, \mu^N) \geq \frac{x}{2} + CN^{-r} \right) \leq e^{-\frac{x^2N^{1-2r}}{4c^2}}.
    \end{equation}
    By \cref{thm:bw_lip_limit}, there exists $C_L > 0$ s.t. for all $N \in \N$, $\AWD_{1}(\mu , \mu \ast \calN_{\sigma_N} )  \leq C_L \sigma_N = C_L N^{-r}$. By combining this, \eqref{eq:thm:smooth_emp_deviation:1}, and triangle inequality, we have for all $x > 0$ and $N \in \N$, 
    \begin{equation*}
        \begin{split}
            &\quad \sP\Big(\AWD_{1}(\mu , \mu^N \ast \calN_{\sigma_N} ) \geq x + C N^{-r} \Big)\\
            &\leq \sP\Big(\AWD_{1}^{(\sigma_N)}(\mu  , \mu^N) \geq \frac{x}{2} + C N^{-r} \Big) + \sP\Big(\AWD_{1}(\mu , \mu \ast \calN_{\sigma_N} ) \geq \frac{x}{2} +  C N^{-r} \Big)\\
            &= \sP\Big(\AWD_{1}^{(\sigma_N)}(\mu  , \mu^N) \geq \frac{x}{2} + C N^{-r} \Big) \leq e^{-\frac{x^2N^{1-2r}}{4c^2}}.
        \end{split}
    \end{equation*}
    By re-scaling $c$, we establish \eqref{eq:thm:smooth_emp:0.2}. Finally, we prove the almost sure convergence. Notice that \cref{thm:smooth_adw_as} holds for fixed $\sigma$ so we can not simply apply this directly. Nevertheless the proof is similar to the proof of \cref{thm:smooth_adw_as}. By \cref{lem:compact_approx_2}, for all $\epsilon>0$, there exists $\nu$ compactly supported s.t.  
    \begin{equation}\label{eq:thm:smooth_emp_as:1}
        \sup_{\sigma\in(0,1]}\AWD_{1}^{(\sigma)}(\mu, \nu) \leq \epsilon \quad \text{and} \quad \lim_{N\to\infty}\sup_{\sigma\in(0,1]}\AWD_{1}^{(\sigma)}(\mu^N, \nu^N)  \leq \epsilon, \quad \sP\text{-a.s.}
    \end{equation}
    Notice that $\nu$ is compactly supported by construction and $\nu^N$ are empirical measures of $\nu$. By combining \cref{lem:optimal_bandwidth} and \cref{thm:smooth_adw_mean_deviation}, there exists $C, c>0$ s.t. for all $N \in \N$,
    \begin{equation*}
        \sP\!\left(\AWD_{1}^{(\sigma_N)}(\nu, \nu^N ) \geq \frac{x}{2} + CN^{-r} \right) \leq e^{-\frac{x^2N^{1-2r}}{4c^2}}.
    \end{equation*}
    Setting $x = N^{-\frac{r}{2}}$, $\lim_{N \to 0} \frac{1}{2}N^{-\frac{r}{2}} + CN^{-r} = 0 $ and $\sum_{N=1}^{\infty} e^{-\frac{N^{1-3r} }{4c^2}} < \infty$. Thus, by Borel-Cantelli lemma, we have $\lim_{N \to \infty} \AWD_1^{(\sigma_N)}(\nu, \nu^N) = 0,~\sP\text{-a.s.}$ Therefore, by combining this, \eqref{eq:thm:smooth_emp_as:1}, and triangle inequality, we have
    \begin{equation*}
    \begin{split}
        \lim_{N \to \infty} \AWD_1^{(\sigma_N)}(\mu, \mu^N) \leq \sup_{\sigma\in(0,1]}\AWD_{1}^{(\sigma)}(\mu , \nu ) + \lim_{N\to\infty}\sup_{\sigma\in(0,1]}\AWD_{1}^{(\sigma)}(\mu^N , \nu^N) + 0 \leq 2\epsilon, \quad \sP\text{-a.s.}
    \end{split}
    \end{equation*}
    By arbitrarity of $\epsilon$, we get $\lim_{N \to \infty}\AWD_1^{(\sigma_N)}(\mu, \mu^N) = 0$, $\sP$-a.s. Combining this, triangle inequality, and \cref{thm:bw_limit}, we conclude that
\begin{equation*}
\begin{aligned}
    \lim_{N \to \infty}\AWD_{1}(\mu , \mu^N \ast \calN_{\sigma_N} ) \leq \lim_{N \to \infty} \AWD_{1}(\mu, \mu \ast \calN_{\sigma_N} ) +  \lim_{N \to \infty} \AWD_{1}(\mu\ast\calN_{\sigma_N} , \mu^N \ast \calN_{\sigma_N} ) = 0, \quad \sP\text{-a.s.}
\end{aligned}
\end{equation*}
\end{proof}

\section{Smoothness and adapted empirical measures}
\label{sec:smooth_and_ad_emp_meas}
In this section, we establish the convergence of $\mathsf{AS}$-\textit{Emp}. First, we prove the convergence of $\mathsf{AS1}$-\textit{Emp} i.e. the $M=1$ case for $\mathsf{AS}$-\textit{Emp}. Then we establish the convergence of $\mathsf{AS}$-\textit{Emp} for general $M \in \N$.
\subsection{Adapted empirical smoothed measures}
\label{sec:ad_emp_smooth_meas}
Notice that for $\sigma >0$, the $\mathsf{AS1}$-\textit{Emp} of $\mu$ by definition $\hat{\mu}_{\sigma}^{N} \defeq \frac{1}{N}\sum_{n=1}^N \delta_{\hat{\varphi}^{N}(X^{(n)} + \sigma_N\varepsilon^{(n,m)})}$ is the $\mathsf{A}$-\textit{Emp} of $\mu_\sigma = \mu \ast \calN_\sigma$. Therefore, we can prove the convergence of $\mathsf{AS1}$-\textit{Emp} by the convergence theorems of $\mathsf{AS}$-\textit{Emp}; see \cite[Theorem~2.16 and Theorem~2.19]{acciaio2022convergence}.

\begin{theorem}
\label{thm:ad_emp_smooth_mean_deviation}
    Set  $\Delta_N = \sigma_N = N^{-\frac{1}{\calD(d)T}}$ for all $N\in\N$, with $\calD(d) = d$ if $d\geq 3$ and  $\calD(d) = d+1$ if $d=1,2$. Let $L >0$, $\alpha \geq 2$, $\gamma > 0$, $\mu \in \mathcal{P}(\R^{dT})$ with finite $(\alpha,\gamma)$-exponential moment. Assume that $\sup_{x_{1:t}\in\R^{dt}}\calE_{\alpha,2^\alpha \gamma}(\mu_{x_{1:t}}) < \infty$ for all $ t=1,\dots,T-1$ and that for all $\sigma \in (0,1]$, $\mu \ast \calN_{\sigma}$ has $L$-Lipschitz kernels. Then there exist constants $c,C > 0$ s.t., for all $x>0$ and $N\in \N$,
    \begin{equation}          
    \label{eq:thm:ad_emp_smooth_mean_deviation:0.1}
    \E\big[\AWD_1(\mu,\hat{\mu}_{\sigma_N}^{N})\big] \leq CN^{-r},
    \end{equation}
    \begin{equation}
    \label{eq:thm:ad_emp_smooth_mean_deviation:0.2}
        \sP\Big(\AWD_1(\mu,\hat{\mu}_{\sigma_N}^{N}) \geq  x + CN^{-r} \Big) \leq Ce^{-c Nx^2},
    \end{equation}
    and $\lim_{N \to \infty}\AWD_1(\mu,\hat{\mu}_{\sigma_N}^{N}) = 0$, $\sP$-a.s.  
\end{theorem}
\begin{proof}
    For $\sigma \in (0,1]$ we first check that $\mu_\sigma$ satisfies the exponential moments assumption of \cite[Theorem~2.16 and Theorem~2.19]{acciaio2022convergence}. {\color{black} By \cref{lem:cond_kernel}-(iii), }$\mu_\sigma$ has uniform $(\alpha,\gamma)$-exponential moment kernels for all $\sigma\in (0,1]$. On the other hand, by assumptions, for all $\sigma \in (0,1]$, $\mu_\sigma$ has $L$-Lipschitz kernels. Therefore, we can apply Theorem~2.16 (i) (with $p > \frac{d}{d-1}$) and Theorem~2.19 (i) in \cite{acciaio2022convergence} to $\mu_\sigma$ with $N$ many samples, for all $\sigma \in (0,1]$. Then there exist constants $c,C > 0$ such that, for all $\sigma \in (0,1]$, $x \geq 0$ and $N \in \N$,
    \begin{equation}
    \label{eq:thm:ad_emp_smooth_mean_deviation:1}
    \E\Big[\AWD_1(\mu_{\sigma},\hat{\mu}_{\sigma}^{N})\Big] \leq CN^{-\frac{1}{\calD(d)T}}, \quad \sP\Big(\AWD_1(\mu_{\sigma},\hat{\mu}_{\sigma}^{N}) \geq  x + CN^{-\frac{1}{\calD(d)T}} \Big) \leq Ce^{-cNx^2}.
    \end{equation}
    On the other hand, by \cref{thm:bw_lip_limit}, there exists $C_L >0$ s.t. for all $N \in \N$,
    \begin{equation}
    \label{eq:thm:ad_smooth_emp_mean_deviation:2}
    \begin{split}
        \AWD_{1}(\mu , \mu_{\sigma_N} ) \leq C_L \sigma_N = C_L N^{-\frac{1}{\calD(d)T}}.
    \end{split} 
    \end{equation}
    Therefore, by combining \eqref{eq:thm:ad_smooth_emp_mean_deviation:1}, \eqref{eq:thm:ad_smooth_emp_mean_deviation:2} and triangle inequality, we prove \eqref{eq:thm:ad_emp_smooth_mean_deviation:0.1} and \eqref{eq:thm:ad_emp_smooth_mean_deviation:0.2}. Furthermore, by combining this and Borel-Cantelli as in the proof of \cref{lem:smooth_adw_as_compact}, we prove almost sure convergence.
\end{proof}

\subsection{Adapted smoothed empirical measures}
\label{sec:ad_smooth_emp_meas}

In this subsection, we establish the convergence of adapted smoothed empirical measures. Recall the definition of adapted smoothed empirical measures that 
\begin{equation*}
    {\color{black}\tilde{\mu}_{\sigma,\zeta}^{N,M}} \defeq \frac{1}{M}\sum_{m=1}^{M}\tilde{\mu}^N_{\zeta,m},\quad \tilde{\mu}^N_{\zeta,m} = (x\mapsto x+\zeta^m)_{\#}\tilde{\mu}^N_m,\quad \tilde{\mu}^N_m\defeq \frac{1}{N}\sum_{n=1}^N \delta_{\hat{\varphi}^{N}(X^{(n)} + \sigma\varepsilon^{(n,m)})},
\end{equation*}
where $\hat{\varphi}^{N}$ is the adapted projection in \cref{def:uniform_adapted_empirical_measure}, $(\zeta^m)_{m=1}^{M}$ are $M$ distinct points in $(0,\frac{1}{2G_N})^{dT}$. Without $(\zeta^m)_{m=1}^{M}$, all $(\tilde{\mu}^M_m)_{m=1}^{M}$ are supported on the same grid $\hat{\Lambda}^N$. Then some measures might have intersection on the support. Since the adapted Wasserstein distance is so sensitive to the support that it is not convex with respect to its marginal; see \cref{ex:awd_nonconvex} for a counterexample. However with {\color{black}distinct points} $(\zeta^m)_{m=1}^{M}$ introduced, $(\hat{\Lambda}^N + \zeta^m)_{m=1}^{M}$ are distinct grids such that $(\tilde{\mu}^M_{\zeta,m})_{m=1}^{M}$ has no intersection in the support. This allows us to decouple bicausal couplings on distinct supports to establish convexity of the adapted Wasserstein distance. Also, by choosing $(\zeta^m)_{m=1}^{M}$ from $(0,\frac{1}{2G_N})^{dT}$, the shifting error {\color{black} $ \AWD_1(\tilde{\mu}^N_{\zeta,m}, \tilde{\mu}^N_{m}) \leq \frac{\sqrt{d}T}{2}\frac{1}{G_N} \leq \frac{\sqrt{d}T}{2}N^{-r}$} is absorbed by $\calO(N^{-r})$.
\begin{lemma}
\label{lem:awd_convex}
    Let $M \in \N$, $\mu \in \calP(\R^{dT})$, $\nu = \frac{1}{M}\sum_{m=1}^{M}\nu^m$, and $\nu^m \in \calP(\R^{dT})$  for all $m=1,\dots,M$. Assume $(\nu^m)_{m=1}^M$ have distinct supports {\color{black}i.e. for all $i,j = 1,\dots, M$ and $i\neq j$, $\operatorname{supp}(\nu^i) \cap \operatorname{supp}(\nu^j) = \emptyset$}. Then $\AWD_1(\mu, \nu) \leq \frac{1}{M}\sum_{m=1}^{M}\AWD_1(\mu,\nu^m)$.
\end{lemma}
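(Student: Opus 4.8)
The plan is to construct a bicausal coupling of $\mu$ and $\nu$ by superposing bicausal couplings of $\mu$ with the individual pieces $\nu^m$, and then to show that the disjointness of the supports is exactly what keeps the superposition bicausal — in general such a mixture need \emph{not} be bicausal, which is the source of the non-convexity recorded in \cref{ex:awd_nonconvex}. Concretely, fix $\epsilon>0$, pick $\pi^m\in\bccpl(\mu,\nu^m)$ with $\int\|x-y\|\,\pi^m(dx,dy)\le\AWD_1(\mu,\nu^m)+\epsilon$ for each $m$, and set $\pi\defeq\frac1M\sum_{m=1}^M\pi^m$. Since each $\pi^m$ has first marginal $\mu$ and second marginal $\nu^m$, $\pi\in\cpl(\mu,\nu)$ and $\int\|x-y\|\,\pi(dx,dy)=\frac1M\sum_m\int\|x-y\|\,\pi^m(dx,dy)\le\frac1M\sum_m\AWD_1(\mu,\nu^m)+\epsilon$. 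Hence, once $\pi$ is shown to be bicausal, $\AWD_1(\mu,\nu)\le\int\|x-y\|\,\pi(dx,dy)$ and letting $\epsilon\downarrow0$ finishes the proof; it remains to prove $\pi\in\bccpl(\mu,\nu)$.

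I would verify bicausality via the disintegration characterization: $\pi_1\in\cpl(\mu_1,\nu_1)$ (immediate, since $\pi_1=\frac1M\sum_m\pi_1^m$ with $\pi_1^m\in\cpl(\mu_1,\nu_1^m)$), and $\pi_{x_{1:t},y_{1:t}}\in\cpl(\mu_{x_{1:t}},\nu_{y_{1:t}})$ for $\pi_{1:t}$-a.e.\ $(x_{1:t},y_{1:t})$ and every $t$. Disintegrating the mixture, with $\rho\defeq\pi_{1:t}$ and $\rho^m\defeq\pi^m_{1:t}$ (so $\rho=\frac1M\sum_m\rho^m$), one gets for $\rho$-a.e.\ $(x_{1:t},y_{1:t})$
\[
\pi_{x_{1:t},y_{1:t}}=\sum_{m=1}^M\lambda_m(x_{1:t},y_{1:t})\,\pi^m_{x_{1:t},y_{1:t}},\qquad\lambda_m\defeq\tfrac1M\tfrac{d\rho^m}{d\rho},\qquad\sum_{m=1}^M\lambda_m=1 .
\]
Taking $x_{t+1:T}$-marginals and using bicausality of each $\pi^m$ (so $\pi^m_{x_{1:t},y_{1:t}}$ has first marginal $\mu_{x_{1:t}}$ for $\rho^m$-a.e.\ point) yields first marginal $\sum_m\lambda_m\mu_{x_{1:t}}=\mu_{x_{1:t}}$; thus the causality constraint in the $x$-to-$y$ direction holds for any such superposition, regardless of the supports.

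The second marginal is the crux, and this is where the hypothesis enters — not merely disjointness of $\mathrm{supp}(\nu^m)$, but disjointness of the supports of the time-$1{:}t$ marginals $\nu^1_{1:t},\dots,\nu^M_{1:t}$ for every $t$ (which is what ``distinct supports'' should be read to mean here, and which in the application to $\mathsf{AS}$-\textit{Emp} is ensured by placing the $\nu^m$ on the mutually shifted grids $\hat\Lambda^N+\zeta^m$). Given this, for $\nu_{1:t}$-a.e.\ $y_{1:t}$ there is a unique index $k=k(y_{1:t})$ with $y_{1:t}\in\mathrm{supp}(\nu^k_{1:t})$, whence $\nu_{y_{1:t}}=\nu^k_{y_{1:t}}$; and since the $y_{1:t}$-marginal of $\rho^m$ is $\nu^m_{1:t}$, the event $\{y_{1:t}\in\mathrm{supp}(\nu^k_{1:t})\}$ is $\rho^m$-null for $m\ne k$, so $\lambda_m=0$ and $\lambda_k=1$ for $\rho$-a.e.\ point. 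Hence $\pi_{x_{1:t},y_{1:t}}=\pi^k_{x_{1:t},y_{1:t}}$, whose $y_{t+1:T}$-marginal is $\nu^k_{y_{1:t}}=\nu_{y_{1:t}}$. Together with the previous step this gives $\pi_{x_{1:t},y_{1:t}}\in\cpl(\mu_{x_{1:t}},\nu_{y_{1:t}})$, so $\pi$ is bicausal, and the proof is complete.

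The main obstacle I anticipate is the measure-theoretic bookkeeping in the mixture disintegration — justifying the identity $\lambda_m=\frac1M\,d\rho^m/d\rho$ and the various $\rho$-a.e.\ statements, in particular the ``collapse'' $\lambda_k=1$ on the relevant null sets; the remainder of the argument is soft. It is also worth noting that the hypothesis really is needed in this marginal-disjointness form: without it the weights $\lambda_m$ depend genuinely on $x_{1:t}$, the $y_{t+1:T}$-marginal of $\pi_{x_{1:t},y_{1:t}}$ no longer equals $\nu_{y_{1:t}}$, and the claimed inequality can fail — precisely the non-convexity phenomenon of \cref{ex:awd_nonconvex}.
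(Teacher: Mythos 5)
Your proof follows the same route as the paper's: form the mixture $\pi = \frac1M\sum_m\pi^m$ of bicausal couplings $\pi^m\in\bccpl(\mu,\nu^m)$, check that the first-marginal causality holds unconditionally, and use disjointness of supports to get the collapse $\pi_{x_{1:t},y_{1:t}}=\pi^{k(y_{1:t})}_{x_{1:t},y_{1:t}}$, hence the second-marginal causality. The $\epsilon$-approximate-optimizer step you add is slightly more careful than the paper, which simply observes that the construction works for any choice of the $\pi^m$ and then passes to the infimum at the end; both are fine.

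One genuine contribution of your write-up is worth flagging: you correctly observe that the hypothesis as literally stated --- disjointness of $\mathrm{supp}(\nu^m)$ in $\R^{dT}$ --- is not sufficient, and that what the proof actually requires is disjointness of the supports of the time-$1{:}t$ marginals $\nu^m_{1:t}$ for every $t$. This is not pedantry: in \cref{ex:awd_nonconvex} the measures $\nu^1$ and $\nu^2$ do have disjoint supports in $\R^2$, yet the conclusion of the lemma fails, because their time-1 marginals share the same two atoms $\{\pm\epsilon\}$. The paper silently uses the stronger marginal-disjointness (its $\Lambda_m$ is implicitly re-used for the time-$t$ projection of the support, and its claim $\frac1M\frac{d\nu^m}{d\nu}(y_{1:t})=\mathbbm{1}_{\{y_{1:t}\in\Lambda_m\}}$ requires exactly this), and the application to $\mathsf{AS}$-\textit{Emp} supplies it, since the grids $\hat\Lambda^N+\zeta^m$ are product sets shifted componentwise by distinct vectors $\zeta^m\in(0,\tfrac1{2G_N})^{dT}$, so every time-$t$ marginal of the support is disjoint across $m$. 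Stating the hypothesis in that precise form, as you do, both makes the lemma literally correct and makes the role of \cref{ex:awd_nonconvex} transparent: it is a counterexample precisely to the weaker, insufficient reading.
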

\begin{proof}
Let $\pi^{m}\in\bccpl(\mu,\nu^m)$, $m=1,\dots,M$ and define $\pi = \frac{1}{M}\sum_{m=1}^{M}\pi^{m}$. First, we notice that $\pi \in \cpl(\mu,\nu)$ since marginals are interchangeable with average. Thus, we only remain to prove that $\pi$ is a bi-causal coupling. We prove it by inspecting whether $\pi_{x_{1:t},y_{1:t}} \in \cpl(\mu_{x_{1:t}}, \nu_{y_{1:t}})$ for $\pi$-a.s. $(x_{1:t},y_{1:t}) \in \R^{2dt}$ and $t=1,\dots,T-1$. Notice that $(\nu^m)_{m=1}^M$ have distinct supports, which we denote by $(\Lambda_m)_{m=1}^{M}$, {\color{black}$\Lambda_m = \operatorname{supp}(\nu^m)$, $m = 1,\dots, M$}. We have $\nu^m(\Lambda_{m'}) =  \mathbbm{1}_{\{m=m'\}}$ and $\pi^m(\R^{dT}\times\Lambda_{m'}) = \mathbbm{1}_{\{m=m'\}}$. Thus, for $\nu$-a.s. $y_{1:t} \in \R^{dt}$ and $\pi$-a.s. $(x_{1:t},y_{1:t}) \in \R^{2dt}$, $\frac{1}{M}\frac{d\nu^m}{d\nu}(y_{1:t})=\mathbbm{1}_{\{y_{1:t} \in \Lambda_m\}}$, $\frac{1}{M}\frac{d\pi^m}{d\pi}(x_{1:t},y_{1:t})=\mathbbm{1}_{\{y_{1:t} \in \Lambda_m\}}$. Therefore,
\begin{equation}
\label{eq:lem:awd_convex:1}
\begin{split}
    \pi_{x_{1:t},y_{1:t}} &= \frac{1}{M}\sum_{m=1}^{M}\frac{d\pi^m}{d\pi}(x_{1:t},y_{1:t})\pi^m_{x_{1:t},y_{1:t}} = \sum_{m=1}^{M}\mathbbm{1}_{\{y_{1:t} \in \Lambda_m\}}\pi^m_{x_{1:t},y_{1:t}},\\
    \nu_{y_{1:t}}(dy_{t+1}) &= \frac{1}{M}\sum_{m=1}^{M}\frac{d\nu^m}{d\nu}(y_{1:t})\nu^m_{y_{1:t}}(dy_{t+1}) = \sum_{m=1}^{M}\mathbbm{1}_{\{y_{1:t} \in \Lambda_m\}}\nu^m_{y_{1:t}}(dy_{t+1}).
\end{split}
\end{equation}
Since $\pi^{m}\in\bccpl(\mu,\nu^m)$, we have $\pi^m_{x_{1:t},y_{1:t}} \in \cpl(\mu_{x_{1:t}}, \nu^m_{y_{1:t}})$. Combining this and \eqref{eq:lem:awd_convex:1}, we have
\begin{equation*}
\left\{\begin{aligned}
    \pi_{x_{1:t},y_{1:t}}(dx_{t+1}) &= \sum_{m=1}^{M}\mathbbm{1}_{\{y_{1:t} \in \Lambda_m\}}\pi^m_{x_{1:t},y_{1:t}}(dx_{t+1}) = \mu_{x_{1:t}}(dx_{t+1}),\\
    \pi_{x_{1:t},y_{1:t}}(dy_{t+1}) &= \sum_{m=1}^{M}\mathbbm{1}_{\{y_{1:t} \in \Lambda_m\}}\pi^m_{x_{1:t},y_{1:t}}(dy_{t+1}) = \sum_{m=1}^{M}\mathbbm{1}_{\{y_{1:t} \in \Lambda_m\}}\nu^m_{y_{1:t}}(dy_{t+1}) = \nu_{y_{1:t}}(dy_{t+1}),
\end{aligned}\right.
\end{equation*}
which proves that $\pi \in \bccpl(\mu,\nu)$. Therefore, we conclude that 
\begin{equation*}
    \AWD_1(\mu,\nu) \leq \frac{1}{M}\sum_{m=1}^{M}\inf_{\pi^m\in\bccpl(\mu,\nu^m)}\int \Vert x-y\Vert \pi^{m}(dx,dy) = \frac{1}{M}\sum_{m=1}^{M}\AWD_1(\mu,\nu^m).
\end{equation*}
\end{proof}

\begin{example}
\label{ex:awd_nonconvex}
Let $\epsilon>0$, $M=2$, $\mu,\nu^1,\nu^2,\nu \in\calP(\R^{2})$ s.t. $\mu = \nu^1 = \frac{1}{2}\delta_{(\epsilon,1)} +  \frac{1}{2}\delta_{(-\epsilon,-1)}$, $\nu^2 = \frac{1}{2}\delta_{(-\epsilon,1)} +  \frac{1}{2}\delta_{(\epsilon,-1)}$, and $\nu = \frac{1}{2}\nu^1 + \frac{1}{2}\nu^2$; see Figure~\ref{fig:ex} for visualization. Then we have $\AWD_1(\mu,\nu^1) = 0$, $\AWD_1(\mu,\nu^2) = 2\epsilon$, and $\AWD_1(\mu,\nu) = 1$. By choosing $\epsilon<1$, we get $\AWD_1(\mu,\nu) = 1  > \epsilon = \frac{1}{M}\sum_{m=1}^{M}\AWD_1(\mu,\nu^m)$.
\begin{figure}[H]
    \centering
    \begin{subfigure}{.3\textwidth}
      \centering
      \includegraphics[width=0.8\linewidth]{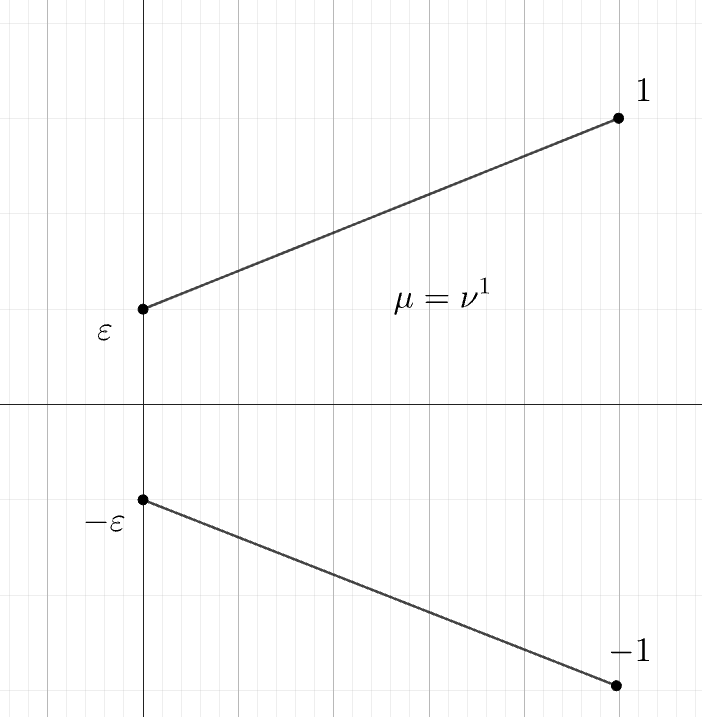}
      \caption{$\mu = \nu^1$}
    \end{subfigure}
    \begin{subfigure}{.3\textwidth}
      \centering
      \includegraphics[width=0.8\linewidth]{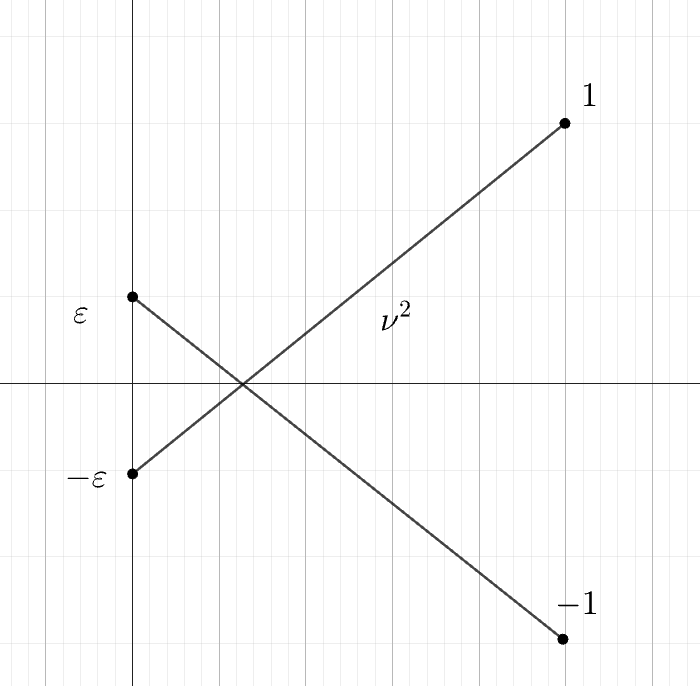}
      \caption{$\nu^2$}
    \end{subfigure}
    \begin{subfigure}{.3\textwidth}
      \centering
      \includegraphics[width=0.8\linewidth]{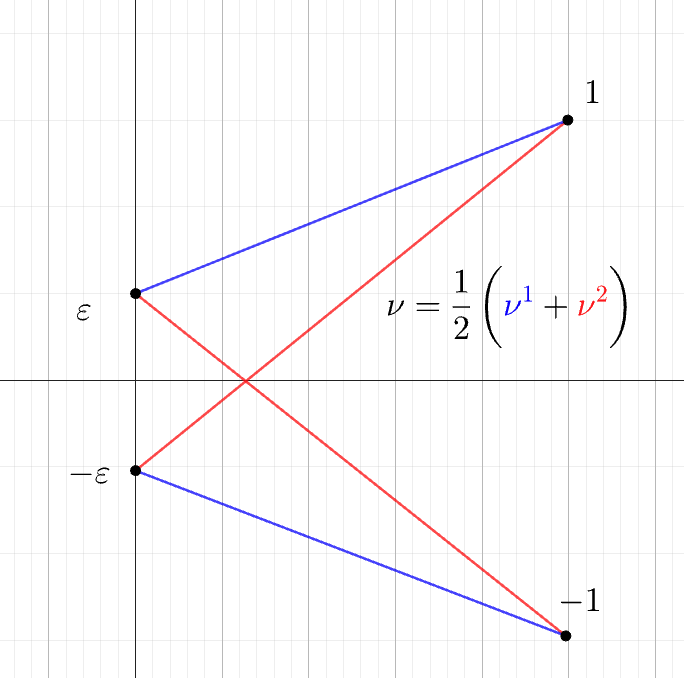}
      \caption{$\nu$}
    \end{subfigure}
    \caption{Visualization of $\mu, \nu^1, \nu^2$ and $\nu$.}
    \label{fig:ex}
\end{figure}
\end{example}

\begin{proof}[Proof of \cref{thm:ad_smooth_emp_mean_deviation}]
    Recall the definition of adapted smoothed empirical measures that 
    \begin{equation*}
        {\color{black}\tilde{\mu}_{\sigma_N,\zeta}^{N,M}} \defeq \frac{1}{M}\sum_{m=1}^{M}\tilde{\mu}^N_{\zeta,m},\quad \tilde{\mu}^N_{\zeta,m} = (x\mapsto x+\zeta^m)_{\#}\tilde{\mu}^N_m,\quad \tilde{\mu}^N_m\defeq \frac{1}{N}\sum_{n=1}^N \delta_{\hat{\varphi}^{N}(X^{(n)} + \sigma_N\varepsilon^{(n,m)})},
    \end{equation*}
    where $\hat{\varphi}^{N}$ is the adapted projection in \cref{def:uniform_adapted_empirical_measure}, $(\zeta^m)_{m=1}^{M}$ are $M$ distict points in $(0,\frac{1}{2G_N})^{dT}$. Notice that $(\tilde{\mu}^N_m)_{m=1}^{M}$ have distinct supports. By \cref{lem:awd_convex}, we have
    \begin{equation}\label{eq:thm:ad_smooth_emp_mean_deviation:1}
        \AWD_1(\mu, {\color{black}\tilde{\mu}_{\sigma_N,\zeta}^{N,M}}) \leq  \frac{1}{M}\sum_{m=1}^M\AWD_1(\mu, \tilde{\mu}^N_{\zeta,m}) \leq \frac{1}{M}\sum_{m=1}^M\AWD_1(\mu, \tilde{\mu}^N_{m}) + \frac{\sqrt{d}T}{2}\frac{1}{G_N}.
    \end{equation}
    Let $C_0 = \frac{\sqrt{d}T}{2}$. Therefore, by combining \eqref{eq:thm:ad_smooth_emp_mean_deviation:1}, \cref{thm:ad_emp_smooth_mean_deviation} and the fact that $\frac{1}{G_N} \leq \Delta_N = N^{-r}$, we prove \eqref{eq:thm:ad_smooth_emp_mean_deviation:0}. Furthermore, by combining this and Borel-Cantelli as in the proof of \cref{lem:smooth_adw_as_compact}, we prove almost sure convergence. This completes the proof of \cref{thm:ad_smooth_emp_mean_deviation}.
\end{proof}

We end this section proving a popular class of unbounded measures, which satisfy the assumption of \cref{thm:ad_smooth_emp_mean_deviation}.
\begin{example}[Gaussian mixture model]
    \label{ex:models}
    Let $K \in \N$ and $\mu \in \calP(\R^{dT})$ with density $p_{\mu}(x) = \sum_{k=1}^{K}w_k\varphi_{\sigma^k}(x-x^k)$, where $\sum_{k=1}^{K} w_k = 1$, $w_k\geq 0, \sigma^k \geq 0$, $x^k \in \R^{dT}$ for all $k=1,\dots,K$. For all $\sigma \in (0,1]$, $\mu\ast \calN_\sigma$ has density 
    \begin{equation*}
    \begin{split}
        p_{\mu \ast \calN_\sigma}(x) &= \int \varphi_\sigma(x-y)p_\mu(y)dy = \sum_{k=1}^{K}w_k\int\varphi_\sigma(x-y)\varphi_{\sigma^k}(y-x^k)dy\\
        &= \sum_{k=1}^{K}w_k\int\varphi_\sigma(x-y)\varphi_{\sigma^k}(y-x^k)dy = \sum_{k=1}^{K}w_k\varphi_{\sqrt{(\sigma^k)^2 + \sigma^2}}(x-x^k).
    \end{split}
    \end{equation*}
    Therefore, for all $x_{1:t}\in \R^{dt}, t=1,\dots,T-1$, the kernel $(\mu\ast\calN_\sigma)_{x_{1:t}}$ has density 
    \begin{align*}
        p_{\mu \ast \calN_\sigma}(x_{t+1}| x_{1:t}) = \sum_{k=1}^{K} \alpha_k(x_{1:t})\varphi_{\sqrt{(\sigma^k)^2 + \sigma^2}}(x_{t+1}-x_{t+1}^k),
    \end{align*}
    where $\alpha_k(x_{1:t}) = \frac{w_k\varphi_{\sqrt{(\sigma^k)^2 + \sigma^2}}(x_{1:t}-x^k_{1:t})}{\sum_{k=1}^{K}w_k\varphi_{\sqrt{(\sigma^k)^2 + \sigma^2}}(x_{1:t}-x^k_{1:t})}$. Thus for all $x_{1:t}, x'_{1:t} \in \R^{dt}$, $t=1,\dots,T-1$,
    \begin{align*}
        &\quad~\WD_1((\mu\ast\calN_\sigma)_{x_{1:t}}, (\mu\ast\calN_\sigma)_{x'_{1:t}})\\
        &\leq \sup_{k,k'=1,\dots,K}\WD_1\Big(\calN\big(x_{1:t}^k, (\sigma^k)^2 + \sigma^2\big), \calN\big(x_{1:t}^{k'}, (\sigma^{k'})^2 + \sigma^2\big)\Big)\sum_{k=1}^{K}\vert \alpha_k(x_{1:t}) - \alpha_k(x'_{1:t})\vert .
    \end{align*}
    Since $\alpha_k$ is Lipschitz in $x_{1:t}$, there exists $L>0$ s.t. for all $\sigma \in (0,1]$, $\mu \ast \calN_{\sigma}$ has $L$-Lipschitz kernels. Moreover, notice that the Gaussian mixture model has Gaussian tail in both density and conditional density. Thus, $\mu$ has finite $(2,1)$-exponential moment and $\sup_{x_{1:t}\in\R^{dt}}\calE_{2,4}(\mu_{x_{1:t}}) < \infty$. Therefore, $\mu$ satisfies the assumption in \cref{thm:ad_smooth_emp_mean_deviation}.
\end{example}

\appendix

\section{Appendix}
\label{sec:appendix}

\color{black}
\subsection{Conditional kernel}
\label{sec:cond_kernel}
\begin{lemma}
    \label{lem:cond_kernel}
    Let $\mu \in \calP_1(\R^{dT})$, $\sigma \in (0,1]$, and we denote by $\mu_\sigma \coloneqq \mu \ast \calN_\sigma$. We have:
    \begin{enumerate}[label = (\roman*)]
        \item For all $t = 1,\dots,T-1$, $x_{1:t} \in \R^{dt}$,
        \begin{equation}
        \label{eq:lem:cond_kernel.1}
            (\mu\ast\calN_\sigma)_{x_{1:t}} = \int_{\R^{dt}}(\mu_{y_{1:t}}\ast\calN_{\sigma,d}) \frac{\varphi_{\sigma,dt}(x_{1:t} - y_{1:t})\mu_{1:t}(dy_{1:t})}{\int_{\R^{dt}} \varphi_{\sigma,dt}(x_{1:t} - y'_{1:t})\mu_{1:t}(dy'_{1:t})}.
        \end{equation}
        \item Let $K$ be a compact subset of $\R^{dT}$ and assume $\mu \in \calP(K)$. Then for all $t = 1,\dots,T-1$, $x_{1:t} \in \R^{dt}$,
        \begin{equation}
            \label{eq:lem:cond_kernel.2}
            M_1\big((\mu_\sigma)_{x_{1:t}}\big) \leq \sup_{x\in K}\Vert x \Vert + M_1(\calN_{1,d}).
        \end{equation}
        \item Let $\alpha \geq 2$, $\gamma > 0$, and assume $\sup_{x_{1:t}\in\R^{dt}}\calE_{\alpha,2^\alpha \gamma}(\mu_{x_{1:t}}) < \infty$ for all $ t=1,\dots,T-1$. Then for all $t = 1,\dots,T-1$, $x_{1:t} \in \R^{dt}$,    
        \begin{equation}
            \label{eq:lem:cond_kernel.3}
            \mathcal{E}_{\alpha,\gamma}\big((\mu_\sigma)_{x_{1:t}}\big) \leq \calE_{\alpha,2^\alpha \gamma}(\calN_{1,d})\sup_{y_{1:t}\in\R^{dt}}\calE_{\alpha,2^\alpha \gamma}(\mu_{y_{1:t}}) < \infty.
        \end{equation}
        \item Let $L>0$ and assume $\mu$ has $L$-Lipschitz kernels. Then for all $t = 1,\dots,T-1$, $x_{1:t} \in \R^{dt}$,
        \begin{equation}
            \label{eq:lem:cond_kernel.4}
            \int_{\R^{dt}}\WD_{1}(\mu_{x_{1:t}},(\mu_\sigma)_{x_{1:t}})\mu_\sigma(dx_{1:t}) \leq \sigma \big(M_1(\calN_{1,d}) + L M_1(\calN_{1,dt})\big).
        \end{equation}

        \item Assume $\mu$ has uniformly continuous kernels. Then for all $\epsilon > 0$ there exists $\sigma_\epsilon > 0$ s.t. for all $\sigma \in (0,\sigma_\epsilon)$, $t=1,\dots,T-1$, $x_{1:t} \in \R^{dt}$, 
        \begin{equation}
        \label{eq:lem:cond_kernel.5}
            \int_{\R^{dt}} \WD_{1}(\mu_{x_{1:t}},(\mu_\sigma)_{x_{1:t}})\mu_\sigma(dx_{1:t}) \leq \epsilon.
        \end{equation}            
\end{enumerate}
\end{lemma}
\begin{proof}
    As a convoluted measure, $\mu_\sigma$ has continuous density, which we denote by
    \begin{equation*}
        p_{\mu_\sigma}(x) \coloneqq \int_{\R^{dT}}\varphi_\sigma(x-y)\mu(dy).
    \end{equation*}
    By Bayes' rule, we have for all $t=1,\dots,T-1$, $x_{1:t} \in \R^{dt}$,
    \begin{equation*}
    \begin{split}
        p_{\mu_\sigma}(x_{t+1}|x_{1:t}) = \frac{p_{\mu_\sigma}(x_{1:t+1})}{p_{\mu_\sigma}(x_{1:t})} &= \frac{\int_{\R^{dt}} \int_{\R^d} \varphi_{\sigma,d}(x_{t+1} - y_{t+1})\mu_{y_{1:t}}(dy_{t+1}) \varphi_{\sigma,dt}(x_{1:t} - y_{1:t})\mu_{1:t}(dy_{1:t})}{\int_{\R^{dt}} \varphi_{\sigma,dt}(x_{1:t} - y'_{1:t})\mu_{1:t}(dy'_{1:t})}
     \end{split}
    \end{equation*}
    Notice that $p_{\mu_{y_{1:t}}\ast \calN_{\sigma,d}}(x_{t+1})\coloneqq\int_{\R^d} \varphi_{\sigma,d}(x_{t+1} - y_{t+1})\mu_{y_{1:t}}(dy_{t+1})$ is the density function of $\mu_{y_{1:t}}\ast \calN_{\sigma,d}$. Thus, we get 
    \begin{equation*}
    \begin{split}
        p_{\mu_\sigma}(x_{t+1}|x_{1:t}) = \frac{p_{\mu_\sigma}(x_{1:t+1})}{p_{\mu_\sigma}(x_{1:t})}
        &= \int_{\R^{dt}} p_{\mu_{y_{1:t}}\ast \calN_{\sigma,d}}(x_{t+1})\frac{ \varphi_{\sigma,dt}(x_{1:t} - y_{1:t})\mu_{1:t}(dy_{1:t})}{\int_{\R^{dt}} \varphi_{\sigma,dt}(x_{1:t} - y'_{1:t})\mu_{1:t}(dy'_{1:t})},
    \end{split}
    \end{equation*}
    which proves that \eqref{eq:lem:cond_kernel.1}. For notational simplicity, in the proof below, we let 
    \begin{equation*}
        w_{\sigma,t}(x_{1:t},dy_{1:t}) \coloneqq \frac{\varphi_{\sigma,dt}(x_{1:t} - y_{1:t})\mu_{1:t}(dy_{1:t})}{\int_{\R^{dt}} \varphi_{\sigma,dt}(x_{1:t} - y'_{1:t})\mu_{1:t}(dy'_{1:t})},
    \end{equation*}
    and write
    \begin{equation*}
        (\mu\ast\calN_\sigma)_{x_{1:t}} = \int_{\R^{dt}}(\mu_{y_{1:t}}\ast\calN_{\sigma,d}) w_{\sigma,t}(x_{1:t},dy_{1:t}).
    \end{equation*}
    Next, we prove (ii). For all $\sigma \in (0,1]$, $t=1,\dots,T-1$ and $x_{1:t} \in \R^{dt}$,
    \begin{equation*}
    \begin{split}
    \int_{\R^d} \Vert x_{t+1} \Vert d(\mu\ast\calN_{\sigma}) _{x_{1:t}}
    &= \int_{\R^{d}}\int_{\R^{dt}} \Vert x_{t+1}\Vert (\mu_{y_{1:t}}\ast\calN_{\sigma,d})(dx_{t+1}) w_{\sigma, dt}(x_{1:t},dy_{1:t})  \\
    &\leq \int_{\R^{d}}\Big(\int_{\R^{dt}} \Vert x_{t+1}\Vert \mu_{y_{1:t}}(dx_{t+1}) + \int_{\R^{dt}} \Vert x_{t+1}\Vert\calN_{\sigma,d}(dx_{t+1})\Big)w_{\sigma, dt}(x_{1:t},dy_{1:t})\\
    &= \int_{\R^{d}}\int_{\R^{dt}} \Vert x_{t+1}\Vert \mu_{y_{1:t}}(dx_{t+1})w_{\sigma, dt}(x_{1:t},dy_{1:t}) + M_1(\calN_{\sigma,d})\\
    &\leq \sup_{x\in K}\Vert x \Vert + M_1(\calN_{1,d}),
    \end{split}
    \end{equation*}
    which proves \eqref{eq:lem:cond_kernel.2}. Next, we prove (iii). For all $\sigma \in (0,1]$, $t=1,\dots,T-1$ and $x_{1:t} \in \R^{dt}$,
    \begin{equation*}
    \begin{split}
    \mathcal{E}_{\alpha,\gamma}((\mu\ast\calN_{\sigma})_{x_{1:t}}) &= \int_{\R^d} \exp\left(\gamma\Vert x_{t+1}\Vert^{\alpha}\right)\int_{\R^{dt}}(\mu_{y_{1:t}}\ast\calN_{\sigma,d})(dx_{t+1}) w_{\sigma,t}(x_{1:t},dy_{1:t}) \\
    &= \int_{\R^{dt}}w_{\sigma,t}(x_{1:t},dy_{1:t})\int_{\R^d} \exp\left(\gamma\Vert x_{t+1}\Vert^{\alpha}\right)(\mu_{y_{1:t}}\ast\calN_{\sigma,d})(dx_{t+1})  \\
    &= \int_{\R^{dt}}w_{\sigma,t}(x_{1:t},dy_{1:t})\E_{Z,\eta}\left[\exp\left(\gamma\Vert Z_{y_{1:t}} + \sigma \eta\Vert^{\alpha}\right)\right]\quad \text{$\big((Z_{y_{1:t}},\eta) \sim \mu_{y_{1:t}}\otimes\calN_{1,d}\big)$}\\
    &\leq \int_{\R^{dt}}w_{\sigma,t}(x_{1:t},dy_{1:t})\E_{Z_{y_{1:t}},\eta}\left[\exp\left(2^{\alpha}\gamma\Vert Z_{y_{1:t}}\Vert^{\alpha}\right)\cdot \exp\left(2^{\alpha} \gamma\Vert\sigma\eta\Vert^{\alpha}\right)\right]\\
    &\leq \calE_{\alpha,2^\alpha \gamma}(\calN_{1,d}) \int_{\R^{dt}}w_{\sigma,t}(x_{1:t},dy_{1:t})\E_{Z_{y_{1:t}}}\left[\exp\left(2^{\alpha}\gamma\Vert Z_{y_{1:t}}\Vert^{\alpha}\right)\right] \\
    &\leq \calE_{\alpha,2^\alpha \gamma}(\calN_{1,d})\sup_{y_{1:t}\in\R^{dt}}\calE_{\alpha,2^\alpha \gamma}(\mu_{y_{1:t}}) < \infty,
    \end{split}
    \end{equation*}
    which proves \eqref{eq:lem:cond_kernel.3}. Next, we prove (iv). By assumption, $\mu$ has $L$-Lipschitz kernels. Thus, we obtain that
    \begin{equation}\label{eq:thm:bw_lip_limit:3}
        \begin{aligned}
        \WD_1(\mu_{x_{1:t}},(\mu_\sigma)_{x_{1:t}})&= \WD_1\big(\mu_{x_{1:t}},\int (\mu_{y_{1:t}}\ast \calN_{\sigma,d}) w_{\sigma,t}(x_{1:t},dy_{1:t})\big)\\
        &\leq \int_{\R^{dt}} \WD_1\big(\mu_{x_{1:t}},\mu_{y_{1:t}}\ast \calN_{\sigma,d}\big) w_{\sigma,t}(x_{1:t},dy_{1:t})\\
        &\leq \int_{\R^{dt}} \left(\WD_1\big(\mu_{x_{1:t}},\mu_{y_{1:t}}\big) + \WD_1\big(\mu_{y_{1:t}},\mu_{y_{1:t}}\ast \calN_{\sigma,d}\big)\right) w_{\sigma,t}(x_{1:t},dy_{1:t})\\
        &\leq \sigma M_1(\calN_{1,d}) + L \int_{\R^{dt}}   \Vert x_{1:t} - y_{1:t}\Vert w_{\sigma,t}(x_{1:t},dy_{1:t}).
        \end{aligned}
    \end{equation}
    Notice that 
    \begin{equation}\label{eq:thm:bw_lip_limit:4}
        \begin{aligned}
            &\quad~ \int_{\R^{dt}}  \int_{\R^{dt}}  \Vert x_{1:t} - y_{1:t}\Vert w_{\sigma,t}(x_{1:t},dy_{1:t}) \mu_\sigma(dx_{1:t})\\
            &= \int_{\R^{dt}} \int_{\R^{dt}}  \Vert x_{1:t} - y_{1:t}\Vert\frac{\varphi_{\sigma,dt}(x_{1:t} - y_{1:t})\mu_{1:t}(dy_{1:t})}{\int_{\R^{dt}} \varphi_{\sigma,dt}(x_{1:t} - y'_{1:t})\mu_{1:t}(dy'_{1:t})} \mu_\sigma(dx_{1:t}) \\ 
            &= \int_{\R^{dt}} \int_{\R^{dt}}  \Vert x_{1:t} - y_{1:t}\Vert\frac{\varphi_{\sigma,dt}(x_{1:t} - y_{1:t})\mu(dy_{1:t})}{p_{\mu_\sigma}(x_{1:t})} p_{\mu_\sigma}(x_{1:t}) dx_{1:t}\\
            &= \int_{\R^{dt}} \Big(\int_{\R^{dt}}  \Vert x_{1:t} - y_{1:t}\Vert \varphi_{\sigma,dt}(x_{1:t} - y_{1:t})  dx_{1:t}\Big)\mu(dy_{1:t})
            = \sigma M_1(\calN_{1,dt}).
        \end{aligned}
    \end{equation}
    By combining \eqref{eq:thm:bw_lip_limit:3} and \eqref{eq:thm:bw_lip_limit:4}, we obtain that for all $t=1,\dots,T-1$,
    \begin{equation*}
        \int_{\R^{dt}} \WD_{1}(\mu_{x_{1:t}},(\mu_\sigma)_{x_{1:t}})\mu_\sigma(dx_{1:t}) \leq \sigma \Big(M_1(\calN_{1,d}) + L M_1(\calN_{1,dt})\Big),
    \end{equation*}  
    which proves \eqref{eq:lem:cond_kernel.4}. Finally, we prove (v). By assumption, $\mu$ has uniformly continuous kernels, hence almost Lipschitz continuous. Therefore, for all $\epsilon > 0$ there exists $L_\epsilon > 0$ s.t. for all $t=1,\dots,T-1$, $x_{1:t}, y_{1:t} \in \R^{dt}$, 
    \[
    \WD_1\big(\mu_{x_{1:t}},\mu_{y_{1:t}}\big) \leq L_\epsilon \Vert x_{1:t} - y_{1:t}\Vert + \epsilon . 
    \]
    Therefore, following a similar proof in \eqref{eq:thm:bw_lip_limit:3} and \eqref{eq:thm:bw_lip_limit:4}, we get 
    \begin{equation*}
        \int_{\R^{dt}} \WD_{1}(\mu_{x_{1:t}},(\mu_\sigma)_{x_{1:t}})\mu_\sigma(dx_{1:t}) \leq \sigma \big(M_1(\calN_{1,d}) + L_\epsilon M_1(\calN_{1,dt})\big) + \epsilon.
    \end{equation*}  
    By choosing $\sigma_\epsilon = \inf_{t=1,\dots,T-1}\big(M_1(\calN_{1,d}) + L_\epsilon M_1(\calN_{1,dt})\big)^{-1}\epsilon$ and rescaling $\epsilon$, we prove \eqref{eq:lem:cond_kernel.5}.
\end{proof}
\color{black}

\subsection{Compact approximation}
\label{sec:app:compact_approx}

\begin{lemma}\label{lem:compact_approx_1}
    For all $R\geq 1$, there exists $\phi_R \colon \R^{dT} \to [-R-1,R+1]^{dT}$ s.t. for all $x \in \R^{dT}$, $\Vert \phi_{R}(x) \Vert \leq 2T\sqrt{d}\Vert x \Vert$ and for all $\mu \in \calP(\R^{dT})$, 
    \begin{equation*}
        \AWD_1(\mu, {\phi_R}_{\#}\mu) \leq (1+2T\sqrt{d})\int_{K_R^c} \Vert x\Vert \mu(dx).
    \end{equation*}
\end{lemma}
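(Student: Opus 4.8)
The plan is to take for $\phi_R$ a \emph{smoothed, coordinatewise truncation}, chosen so that it is (a) the identity on $[-R,R]^{dT}\supseteq K_R$, (b) valued in the (open) cube $(-R-1,R+1)^{dT}\subseteq[-R-1,R+1]^{dT}$, and (c) \emph{injective}. Concretely, fix a continuous strictly increasing bijection $\rho_R\colon\R\to(-R-1,R+1)$ with $\rho_R(a)=a$ for $|a|\le R$, for instance
\begin{equation*}
    \rho_R(a)=\begin{cases}-R-1+e^{a+R}, & a<-R,\\ a, & |a|\le R,\\ R+1-e^{-(a-R)}, & a>R,\end{cases}
\end{equation*}
and set $\phi_R(x)=\big(\rho_R(x^{(1)}),\dots,\rho_R(x^{(dT)})\big)$ for $x=(x^{(1)},\dots,x^{(dT)})\in\R^{dT}$. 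Then $\phi_R$ is continuous, maps $\R^{dT}$ into $[-R-1,R+1]^{dT}$, acts coordinatewise (so $\phi_R(x)_{1:t}$ is a function of $x_{1:t}$), and is injective with continuous, coordinatewise inverse.

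Next I would record the two pointwise estimates. If $x\in K_R$ then $|x^{(j)}|\le R$ for every $j$, so $\phi_R(x)=x$, and hence $\|\phi_R(x)\|=\|x\|\le2T\sqrt d\,\|x\|$ and $\|x-\phi_R(x)\|=0$. If instead $\phi_R(x)\ne x$, then some coordinate has modulus $>R$, so $\|x\|\ge\|x_t\|_{\R^d}\ge\max_i|x_t^{(i)}|>R\ge1$; since each time block of $\phi_R(x)$ has Euclidean norm $<\sqrt d\,(R+1)$, we get $\|\phi_R(x)\|<T\sqrt d\,(R+1)\le2T\sqrt d\,R<2T\sqrt d\,\|x\|$, whence $\|x-\phi_R(x)\|\le\|x\|+\|\phi_R(x)\|<(1+2T\sqrt d)\|x\|$; moreover such an $x$ lies in $K_R^c$. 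Combining the two cases gives $\|\phi_R(x)\|\le2T\sqrt d\,\|x\|$ for every $x$, and $\|x-\phi_R(x)\|\le(1+2T\sqrt d)\|x\|\,\mathbbm{1}_{K_R^c}(x)$.

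Then I would check that the graph coupling $\pi:=(\mathrm{id},\phi_R)_{\#}\mu\in\cpl(\mu,{\phi_R}_{\#}\mu)$ is bicausal; granting this, the lemma follows at once from $\AWD_1(\mu,{\phi_R}_{\#}\mu)\le\int\|x-y\|\,\pi(dx,dy)=\int\|x-\phi_R(x)\|\,\mu(dx)\le(1+2T\sqrt d)\int_{K_R^c}\|x\|\,\mu(dx)$. Bicausality amounts to verifying $\pi_{x_{1:t},y_{1:t}}\in\cpl(\mu_{x_{1:t}},\nu_{y_{1:t}})$ for $\pi_{1:t}$-a.e.\ $(x_{1:t},y_{1:t})$ and all $t$, where $\nu:={\phi_R}_{\#}\mu$. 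Since $\phi_R$ acts coordinatewise, $y_{1:t}=\phi_R(x)_{1:t}$ depends only on $x_{1:t}$, so $\pi_{x_{1:t},y_{1:t}}$ is the image of $\mu_{x_{1:t}}$ under $z_{t+1:T}\mapsto\big(z_{t+1:T},\phi_R(x_{1:t},z_{t+1:T})_{t+1:T}\big)$, whose first marginal is $\mu_{x_{1:t}}$; and since $\phi_R$ is in addition injective, disintegrating $\mu$ along $x_{1:t}$ and testing against products $h(y_{1:t})g(y_{t+1:T})$ identifies $\nu_{y_{1:t}}$ exactly as the push-forward of $\mu_{(\phi_R^{-1}(y))_{1:t}}=\mu_{x_{1:t}}$ under the coordinatewise action of $\rho_R$ on the blocks at times $t+1,\dots,T$, i.e.\ as the second marginal of $\pi_{x_{1:t},y_{1:t}}$. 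Hence $\pi\in\bccpl(\mu,\nu)$.

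The delicate point is precisely this last step. A hard clip $a\mapsto(a\vee(-R-1))\wedge(R+1)$ already yields all of the pointwise estimates above, but it is \emph{not} injective, and for a non-injective (even adapted) map $\phi$ the graph coupling $(\mathrm{id},\phi)_{\#}\mu$ need not be bicausal: the $y\to x$ causality can fail because $x_{1:t}$ may determine the conditional law of $x_{t+1}$, hence of $y_{t+1}$, strictly more sharply than $y_{1:t}=\phi(x)_{1:t}$ does, so $\nu_{y_{1:t}}$ becomes a genuine mixture of the $\rho_R$-pushed kernels over the fiber. Smoothing the truncation into a bijection $\rho_R$ repairs this, at the harmless price of the extra factor $2$ (from $R+1\le2R$, using $R\ge1$) and the factor $\sqrt d$ (from passing between the sup of $d$ coordinates and their Euclidean norm).
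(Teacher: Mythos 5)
Your proof is correct, and it takes a genuinely different route from the paper's. The paper's $\phi_R$ is a \emph{first-exit stopping map}: once some time-block $x_t$ leaves $[-R,R]^d$, the image path is sent to the fixed corner $x^R=(R+1,\dots,R+1)$ from that time on. That map is adapted but neither coordinatewise nor injective, and the paper therefore constructs the bicausal coupling $\pi$ explicitly by hand, defining its one-step kernels to be the identity coupling as long as $x_{1:t}\in K_{R,1:t}$ and $\mu_{x_{1:t}}\otimes\delta_{x^R_{t+1}}$ otherwise, then verifying iteratively that $\pi$ sits on the diagonal over $K_R\times K_R$ and that its kernels have the right marginals. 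The reason this works despite the loss of injectivity is precisely the \emph{absorbing} character of $x^R$: once $y_{1:t}$ shows a block equal to $x^R_s$, the conditional law $\nu_{y_{1:t}}$ is deterministically $\delta_{x^R_{t+1}}$, so no information about which $x$ was collapsed is needed. Your construction instead side-steps this subtlety by smoothing the truncation into a coordinatewise strictly increasing bijection $\rho_R$; then $\phi_R$ is an adapted homeomorphism with coordinatewise inverse, conditioning on $y_{1:t}$ is literally the same as conditioning on $x_{1:t}=\phi_R^{-1}(y_{1:t})$, and the graph coupling $(\mathrm{id},\phi_R)_{\#}\mu$ is automatically bicausal. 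You are also right that a hard clip would break bicausality of the graph coupling; the paper handles that same issue differently (via the absorbing state), you by restoring injectivity. Both arrive at the identical constants: your $T\sqrt d(R+1)\le 2T\sqrt d R\le 2T\sqrt d\|x\|$ mirrors the paper's estimate with $x^R$, both using $R\ge1$. Your route is a bit more conceptual (bicausality is a one-line consequence of adaptedness and injectivity) and would generalize cleanly to any adapted bimeasurable bijection; the paper's stopping construction is more hands-on but reuses a pattern that also appears elsewhere in the adapted-transport literature.
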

\begin{proof}
    We define the compact cubes for all $t=1,\dots,T$ by $K_{R,1:t} = [-R,R]^{dt}$, $K_{R, 1} = K_{R,1:1}$, $K_{R} = K_{R, 1:T}$ and let $x^{R} = (R+1, \dots, R+1)\in K_R^c$. For $x_1 \in \R^{d}$ and $x\in \R^{dT}$, we define $\phi_{R,1} \colon \R^{d} \to \R^{d}$ and $\phi_{R}\colon \R^{dT} \to \R^{dT}$ by 
    \begin{equation*}
        \phi_{R,1}(x_1) = \left\{\begin{aligned}
            x_1, \quad &x_1 \in K_{R,1} \\
            x^R_1, \quad &x_{1}\not\in K_{R,1} \\
        \end{aligned}\right. ,\quad
        \phi_{R}(x) = (x_{1:\tau-1},x_{\tau:T}^{R}), \quad \tau=\inf\{t\colon x_t \notin K_{R,1}\} .
    \end{equation*}
    With the projections defined above, we are ready to construct a coupling $\pi$ with the first marginal $\mu$ and second marginal compactly supported, denoted by $\nu$. We define the coupling $\pi$ iteratively by 
    \begin{equation*}
        \pi(dx_{1:T},dy_{1:T}) =\pi_1(dx_1,dy_1)\prod_{t=1}^{T-1}\pi_{x_{1:t},y_{1:t}}(dx_{t+1},dy_{t+1}),
    \end{equation*}
    where $\pi_1 = (\mathbf{id}, \phi_{R,1})_{\#}\mu_1$ and for all $t = 1,\dots,T-1$, $x_{1:t}, y_{1:t} \in \R^{dt}$, 
    \begin{equation*}
     \pi_{x_{1:t},y_{1:t}} = \left\{\begin{aligned}
    (\mathbf{id}, \phi_{R,1})_{\#}\mu_{x_{1:t}}, \quad &x_{1:t} \in K_{R,1:t}, y_{1:t} = x_{1:t} \\
    \mu_{x_{1:t}} \otimes \delta_{x_{t+1}^{R}}, \quad &\text{otherwise} \\
    \end{aligned}\right. .
    \end{equation*} 
    Intuitively, we couple identically until the path goes beyond the compact cube. We claim that $\pi_{1:t}(K_{R,1:t}\times K_{R,1:t} \cap \{x_{1:t}\neq y_{1:t}\}) = 0$ for all $t=1,\dots,T$. First, we notice that $\pi_1 = (\mathbf{id}, \phi_{R,1})_{\#}\mu_1 = (\mathbf{id}, \mathbf{id})_{\#}\mu_1\vert_{K_{R,1}} + \mu_1\vert_{K^{c}_{R,1}}\otimes \delta_{x^{R}_1}$, where $\mu_1\vert_{K_{R,1}}(dx_1) = \mu_1(dx_1\cap K_{R,1})$ and $\mu_1\vert_{K_{R,1}^{c}}(dx_1) = \mu_1(dx_1\cap K_{R,1}^{c})$. Thus $\pi_1(K_{R,1}\times K_{R,1} \cap \{x_1\neq y_1\}) = 0$. Then by induction, assuming $\pi_{1:t}(K_{R,1:t}\times K_{R,1:t} \cap \{x_{1:t}\neq y_{1:t}\}) = 0$, we have 
    \begin{equation}
    \label{eq:lem:compact_approx:0.1}
    \begin{split}
        &\quad~ \pi_{1:t+1}(K_{R,1:t+1}\times K_{R,1:t+1} \cap \{x_{1:t+1}\neq y_{1:t+1}\})\\
        &\leq \pi_{1:t+1}(K_{R,1:t+1}\times K_{R,1:t+1} \cap \{x_{1:t}\neq y_{1:t}\}) + \pi_{1:t+1}(K_{R,1:t+1}\times K_{R,1:t+1} \cap \{x_{1:t} = y_{1:t}, x_{t+1}\neq y_{t+1}\})\\
        &\leq \pi_{1:t}(K_{R,1:t}\times K_{R,1:t} \cap \{x_{1:t}\neq y_{1:t}\}) + \pi_{1:t+1}(K_{R,1:t+1}\times K_{R,1:t+1} \cap \{x_{1:t} = y_{1:t}, x_{t+1}\neq y_{t+1}\})\\
        &= \pi_{1:t+1}(K_{R,1:t+1}\times K_{R,1:t+1} \cap \{x_{1:t} = y_{1:t}, x_{t+1}\neq y_{t+1}\})\\
        &= \int_{K_{R,1:t}\times K_{R,1:t} \cap \{x_{1:t} = y_{1:t}\}} \int_{K_{R,1}\times K_{R,1} \cap \{x_{t+1}\neq y_{t+1}\}} \pi_{x_{1:t},y_{1:t}}(dx_{t+1},dy_{t+1}) \pi_{1:t}(dx_{1:t},dy_{1:t}).
    \end{split}
    \end{equation}
    Then by the definition of $\pi_{x_{1:t},y_{1:t}}$, we have for all $(x_{1:t},y_{1:t})\in K_{R,1:t}\times K_{R,1:t} \cap \{x_{1:t} = y_{1:t}\}$,
    \begin{equation}
    \label{eq:lem:compact_approx:0.2}
        \begin{split}
        \int_{K_{R,1}\times K_{R,1} \cap \{x_{t+1}\neq y_{t+1}\}} \pi_{x_{1:t},y_{1:t}}(dx_{t+1},dy_{t+1}) &=  (\mathbf{id}, \phi_{R,1})_{\#}\mu_{x_{1:t}}(K_{R,1}\times K_{R,1} \cap \{x_{t+1}\neq y_{t+1}\}) = 0.
        \end{split}
    \end{equation}
    Combining \eqref{eq:lem:compact_approx:0.1}, \eqref{eq:lem:compact_approx:0.2} and the induction, we complete the proof of the claim. Now we are ready to check that $\pi\in\bccpl(\mu,\nu)$. On one hand, by definition, $\pi_{x_{1:t},y_{1:t}}(dx_{t+1}) = \mu_{x_{1:t}}(dx_{t+1})$. On the other hand, we know from the claim above that $x_{1:t} = y_{1:t}$ $\pi_{1:t}$-a.s. on $K_{R,1:t}\times K_{R,1:t}$. Thus 
    \begin{equation*}
        \pi_{x_{1:t},y_{1:t}}(dy_{t+1}) = \left\{\begin{aligned}
            {\phi_{R,1}}_{\#}\mu_{y_{1:t}}, \quad & y_{1:t} \in K_{R,1:t} \\
            \delta_{x_{t+1}^{R}}, \quad &\text{otherwise}\\
        \end{aligned}\right.  \quad \pi_{1:t}\text{-a.s.}
    \end{equation*}
    Therefore for all $t=1,\dots,T-1$, $\pi_{x_{1:t},y_{1:t}} \in \cpl(\mu_{x_{1:t}},\nu_{y_{1:t}})$, which proves that $\pi\in\bccpl(\mu,\nu)$. Moreover, it is easy to check that $\nu = {\phi_{R}}_{\#}\mu$ by construction. By construction of $\phi_{R}$, $\nu$ is compactly supported and for all $x\in \R^{dT}$, 
    \begin{equation*}
    \begin{split}
        \Vert \phi_R(x) \Vert &= \Vert (x_{1:\tau-1},x_{\tau:T}^{R}) \Vert = \sum_{t=1}^{\tau-1}\Vert x_t\Vert + \sum_{t=\tau}^{T}\Vert x_t^{R}\Vert = \sum_{t=1}^{\tau-1}\Vert x_t\Vert + \sum_{t=\tau}^{T} \sqrt{d}(R + 1)\\
        &\leq \sum_{t=1}^{\tau-1}\Vert x_t\Vert + \sum_{t=\tau}^{T} \sqrt{d}(R + 1)\frac{\Vert x_\tau \Vert }{R} \leq \sum_{t=1}^{\tau-1}\Vert x_t\Vert + \sum_{t=\tau}^{T} 2\sqrt{d}\Vert x_\tau \Vert \leq 2T\sqrt{d}\Vert x \Vert.
    \end{split}
    \end{equation*}
    Since we have already defined a bi-causal coupling between $\mu$ and $\nu$, that is $\pi = (\mathbf{id}, \phi)_{\#}\mu \in \bccpl(\mu,\nu)$, by the definition of adapted Wasserstein distance we have 
    \begin{equation*}
        \AWD_1(\mu,{\phi_R}_{\#}\mu) \leq \int_{\R^{dT}} \Vert x-\phi_R(x)\Vert \mu(dx) = \int_{K_R^c} \Vert x-\phi_R(x)\Vert \mu(dx) \leq (1+2T\sqrt{d})\int_{K_R^c} \Vert x\Vert \mu(dx).
    \end{equation*}
\end{proof}

\begin{lemma}
\label{lem:compact_approx_2}
    Let $\mu \in \calP_1(\R^{dT})$. Then for all $\epsilon >0$ there exists $\nu\in \calP(\R^{dT})$ compactly supported such that
    \begin{enumerate}[label=(\roman*)]
        \item $\sup_{\sigma\in [0,1]}\AWD_1^{(\sigma)}(\mu,\nu) \leq \epsilon$,
        \item $\lim_{N\to\infty}\sup_{\sigma\in [0,1]}\AWD_1^{(\sigma)}(\mu^N,\nu^N) \leq \epsilon$, $\sP$-a.s.,
    \end{enumerate}
    where $\mu^N$ and $\nu^N$ are empirical measures of $\mu$ and $\nu$.
\end{lemma}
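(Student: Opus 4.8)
The plan is to take $\nu=(\phi_R)_{\#}\mu$, the compactly supported truncation produced by \cref{lem:compact_approx_1} for a radius $R$ to be chosen, and to upgrade \cref{lem:compact_approx_1} to a bound that is \emph{uniform in the bandwidth}: for every $\rho\in\calP_1(\R^{dT})$ and every $\sigma\in[0,1]$,
\begin{equation*}
    \AWD_{1}\big(\rho\ast\calN_{\sigma},\ (\phi_R)_{\#}\rho\ast\calN_{\sigma}\big)\ \le\ (1+2T\sqrt d)\int_{K_R^c}\Vert x\Vert\,\rho(dx).
\end{equation*}
Granting this uniform bound, pick $R$ so large that its right-hand side with $\rho=\mu$ is $\le\epsilon$ (possible since $\mu\in\calP_1$); then (i) is immediate. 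For (ii), keep $\mu^N=\tfrac1N\sum_n\delta_{X^{(n)}}$ and realize an empirical measure of $\nu$ as $\nu^N:=(\phi_R)_{\#}\mu^N=\tfrac1N\sum_n\delta_{\phi_R(X^{(n)})}$, which is admissible because $\phi_R(X^{(n)})$ are i.i.d.\ with law $\nu$. Applying the uniform bound with $\rho=\mu^N$ (compactly supported, hence in $\calP_1$, for each $\omega$) yields $\sup_{\sigma\in[0,1]}\AWD_{1}^{(\sigma)}(\mu^N,\nu^N)\le(1+2T\sqrt d)\tfrac1N\sum_{n=1}^N\Vert X^{(n)}\Vert\mathbbm{1}_{\{X^{(n)}\notin K_R\}}$, and the right-hand side tends $\sP$-a.s.\ to $(1+2T\sqrt d)\int_{K_R^c}\Vert x\Vert\,\mu(dx)\le\epsilon$ by the strong law of large numbers; this gives (ii).

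It remains to prove the uniform bound. Write $\rho_{\sigma}=\rho\ast\calN_{\sigma}$ and $\nu'_{\sigma}=(\phi_R)_{\#}\rho\ast\calN_{\sigma}$. The obstruction to a one-line argument is that the naive coupling obtained by adding a single common Gaussian vector to both coordinates of $(\mathbf{id},\phi_R)_{\#}\rho$ is \emph{not} bi-causal: the unobserved pre-noise path leaks causal information linking the two futures. Instead I would build a bi-causal coupling of $\rho_{\sigma}$ and $\nu'_{\sigma}$ recursively in time, using the mixture representation of the kernels of a Gaussian-smoothed measure, $(\rho_{\sigma})_{x_{1:t}}=\int\big(\rho_{z_{1:t}}\ast\calN_{\sigma,d}\big)\,w_{\sigma,dt}(x_{1:t},dz_{1:t})$ with $w_{\sigma,dt}(x_{1:t},\cdot)$ the posterior law of the pre-noise prefix (exactly the representation used in \cref{lem:uni_linear_cond_moment_compact} and in the proof of \cref{thm:ad_emp_smooth_mean_deviation}), together with the analogous representation of $\nu'_{\sigma}$. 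At each step $t$, given coupled noisy prefixes $(x_{1:t},y_{1:t})$, one couples the two pre-noise posteriors so that the $\nu$-prefix is the $\phi_R$-image of the $\rho$-prefix on the event that the $\rho$-path has not yet left the box $[-R,R]^{d}$ in any of its first $t$ coordinates (and equals the frozen corner $x^R$ afterwards), and then, conditionally on each coupled pair of pre-noise prefixes $(z_{1:t},\bar z_{1:t})$, transports $\rho_{z_{1:t}}\ast\calN_{\sigma,d}$ onto $\nu_{\bar z_{1:t}}\ast\calN_{\sigma,d}$ by the ``identity until first exit'' coupling of $\rho_{z_{1:t}}$ and $\nu_{\bar z_{1:t}}$ followed by the \emph{same} fresh Gaussian increment on both sides. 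The coupled kernel built this way has marginals $(\rho_{\sigma})_{x_{1:t}}$ and $(\nu'_{\sigma})_{y_{1:t}}$ at every step, so by the disintegration characterization the assembled measure lies in $\bccpl(\rho_{\sigma},\nu'_{\sigma})$; and since the shared fresh Gaussian increments cancel in the cost $\Vert x-y\Vert=\sum_t\Vert x_t-y_t\Vert$, the transport cost collapses to $\int\Vert z-\phi_R(z)\Vert\,\rho(dz)\le(1+2T\sqrt d)\int_{K_R^c}\Vert z\Vert\,\rho(dz)$, the very bound of \cref{lem:compact_approx_1} and independent of $\sigma$ (the case $\sigma=0$ being that lemma itself).

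The delicate part — and the main obstacle — is the bookkeeping in this recursion: coupling the pre-noise posteriors consistently across all $T$ times while keeping the coupled-kernel marginals exactly right, and controlling the cost created on the exit event (of $\rho$-mass at most $\rho(K_R^c)$), where the two posteriors no longer agree under $\phi_R$ and the Gaussian pieces must be re-coupled. As in the noiseless case this is naturally handled by a backward induction on the time horizon, in the spirit of \cref{lem:bw_limit_compact} and \cite{backhoff2022estimating}. The payoff over a metric-domination approach (which would force a $\sigma$- and radius-dependent constant via \cref{thm:metric_dom_new} and break down both for unbounded $\mu$ and for the empirical measures as $N\to\infty$) is precisely a constant free of $\sigma$ and of any auxiliary compactification radius, which is what lets the supremum over $\sigma\in[0,1]$ and the strong-law step in (ii) go through.
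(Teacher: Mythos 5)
Your plan replaces the paper's argument with a single appealing claim: a \emph{uniform-in-$\sigma$} version of \cref{lem:compact_approx_1}, namely $\AWD_1(\rho\ast\calN_\sigma,(\phi_R)_{\#}\rho\ast\calN_\sigma)\le(1+2T\sqrt d)\int_{K_R^c}\Vert x\Vert\,\rho(dx)$ for every $\rho\in\calP_1$ and every $\sigma\in[0,1]$. If this held, both (i) and (ii) would indeed follow in a few lines (and your observation that one should realize $\nu^N=(\phi_R)_{\#}\mu^N$ so that it is an empirical measure of $\nu$ coupled to $\mu^N$ is correct and is also implicit in the paper). But that key inequality is never proved: you correctly observe that the naive coupling (same Gaussian noise added to $Z$ and $\phi_R(Z)$) is not bi-causal, sketch a recursive repair, and then explicitly flag ``the bookkeeping in this recursion'' as ``the main obstacle.'' That obstacle is exactly where the proof lives. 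In particular, your recursion needs, for $\pi$-a.e. coupled noisy prefix $(x_{1:t},y_{1:t})$, a deterministic coupling $\bar z_{1:t}=\phi_R(z_{1:t})$ of the two pre-noise posteriors $w_{\sigma,dt}(x_{1:t},\cdot)$ (under $\rho$) and $\bar w_{\sigma,dt}(y_{1:t},\cdot)$ (under $(\phi_R)_{\#}\rho$); that requires the Bayesian consistency $(\phi_R)_{\#}w_{\sigma,dt}(x_{1:t},\cdot)=\bar w_{\sigma,dt}(y_{1:t},\cdot)$, which is not established and looks false off the diagonal, and the compensating ``re-coupling on the exit event'' is left unspecified. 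As written, the uniform bound is asserted, not proved.

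Separately, your claim that a metric-domination route ``would force a $\sigma$- and radius-dependent constant... and break down both for unbounded $\mu$ and for the empirical measures as $N\to\infty$'' is not correct, and this is precisely what the paper exploits. The paper's proof takes $\nu=(\phi_{R_1})_{\#}\mu$ and, for each $\sigma$, inserts a \emph{second} truncation $\phi_{R_2}$ with $1\le R_2\le R_1$ into the smoothed measures:
\begin{equation*}
\AWD_1^{(\sigma)}(\mu,\nu)\le\AWD_1\big((\phi_{R_2})_{\#}(\mu_\sigma),(\phi_{R_2})_{\#}(\nu_\sigma)\big)+\AWD_1\big(\mu_\sigma,(\phi_{R_2})_{\#}\mu_\sigma\big)+\AWD_1\big(\nu_\sigma,(\phi_{R_2})_{\#}\nu_\sigma\big).
\end{equation*}
The two ``near'' terms are handled by \cref{lem:compact_approx_1} and are controlled, uniformly in $\sigma\in[0,1]$, by tails of $\mu_\sigma,\nu_\sigma$ outside $K_{R_2}$; the first term is handled by \cref{thm:metric_dom_new} with $\alpha=\sqrt d\,TR_2$, which gives a constant depending on $R_2$ only, times $\TVD_1$ between the truncated smoothed measures, which in turn is controlled by $\int_{K_{R_1}^c}\Vert x\Vert\,\mu(dx)$. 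Choosing $R_2$ first and then $R_1\ge R_2$ large (given the $R_2$-dependent constant) makes all three pieces $\le\epsilon$, uniformly in $\sigma$; the strong-law step for (ii) then goes through with $R_1,R_2$ fixed. So the domination route neither produces a $\sigma$-dependent constant nor breaks down for unbounded $\mu$ or as $N\to\infty$. To turn your proposal into a proof, you would either need to actually construct and verify the recursive bi-causal coupling (and in particular prove the posterior push-forward identity you rely on, or give an explicit re-coupling scheme with a controlled cost penalty on the exit event), or fall back on the two-radius domination argument above.
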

\begin{proof}[Proof of Lemma~\ref{lem:compact_approx_2}]
    Let $R_1 \geq R_2 \geq 1$, $K_{R_1} = [-R_1,R_1]^{dT}, K_{R_2} = [-R_2,R_2]^{dT}$, $\phi_{R_1}, \phi_{R_2}$ defined as in \cref{lem:compact_approx_1} and $\nu = {\phi_{R_1}}_{\#}\mu$. By triangle inequality, for all $\sigma \in [0,1]$, 
    \begin{equation}
    \label{eq:lem:compact_approx_2:1}
    \begin{split}
    \AWD_1(\mu\ast\calN_\sigma,\nu\ast\calN_\sigma) &\leq \AWD_1({\phi_{R_2}}_{\#}(\mu\ast\calN_\sigma),{\phi_{R_2}}_{\#}(\nu\ast\calN_\sigma))  \\
        & +\AWD_1(\mu\ast\calN_\sigma,{\phi_{R_2}}_{\#}(\mu\ast\calN_\sigma))  + \AWD_1(\nu\ast\calN_\sigma,{\phi_{R_2}}_{\#}(\nu\ast\calN_\sigma)).
    \end{split}
    \end{equation}
    For the last two terms, by \cref{lem:compact_approx_1} we have
    \begin{equation}
    \label{eq:lem:compact_approx_2:2}
    \begin{split}
        \AWD_1(\mu\ast\calN_\sigma,{\phi_{R_2}}_{\#}(\mu\ast\calN_\sigma)) \leq (1+2T\sqrt{d})\int_{K^{c}_{R_2}}\Vert x \Vert d(\mu\ast\calN_\sigma),\\
        \AWD_1(\nu\ast\calN_\sigma,{\phi_{R_2}}_{\#}(\nu\ast\calN_\sigma)) \leq (1+2T\sqrt{d})\int_{K^{c}_{R_2}}\Vert x \Vert d(\nu\ast\calN_\sigma).
        \end{split}
    \end{equation}
    For the first term, by \cref{thm:metric_dom_new} (with $\alpha =\sqrt{d}TR_2$), we have
    \begin{equation}
    \label{eq:lem:compact_approx_2:3}
        \begin{split}
            \AWD_1({\phi_{R_2}}_{\#}(\mu\ast\calN_\sigma),{\phi_{R_2}}_{\#}(\nu\ast\calN_\sigma)) &\leq ((3+4\sqrt{d}TR_2)^{T} - 1) \TVD_1({\phi_{R_2}}_{\#}(\mu\ast\calN_\sigma),{\phi_{R_2}}_{\#}(\nu\ast\calN_\sigma)).
        \end{split}
    \end{equation}
    Moreover, notice that 
   \begin{equation}
   \label{eq:lem:compact_approx_2:4}
    \begin{split}
         &\quad~ \TVD_1({\phi_{R_2}}_{\#}(\mu\ast\calN_\sigma),{\phi_{R_2}}_{\#}(\nu\ast\calN_\sigma)) \leq\TVD_1(\mu\ast\calN_{\sigma}, \nu\ast\calN_{\sigma})\\ 
         &= \int_{\R^{dT}} (2\Vert x + y \Vert + 1) \int_{\R^{dT}}\vert \mu - \nu\vert (dy) \calN_\sigma(dx)\\
        &=\int_{\R^{dT}} \int_{\R^{dT}}(2\Vert x + y \Vert + 1) \calN_\sigma(dx) |\mu - \nu|(dy)\\
        &\leq\int_{\R^{dT}} \Big(\int_{\R^{dT}}(2\Vert x\Vert + 1)\calN_\sigma(dx) + 2\Vert y \Vert \Big)|\mu - \nu|(dy) \\
        &= (2M_1(\calN_\sigma)+1)\int_{\R^{dT}}|\mu - \nu|(dy) + 2\int_{\R^{dT}} \Vert y \Vert |\mu - \nu|(dy)\\
        &\leq (2M_1(\calN_1)+1)\int_{\R^{dT}}|\mu - \nu|(dy) + 2\int_{\R^{dT}} \Vert y \Vert |\mu - \nu|(dy) \\
        &\leq (2M_1(\calN_1)+1)\big(\mu(K_{R_1}^c) + \nu(K_{R_1}^c)\big) + 2\big(\int_{K_{R_1}^c} \Vert x \Vert \mu(dx) + \int_{K_{R_1}^c} \Vert y \Vert \nu(dy)\big)\\
        &\leq (2M_1(\calN_1)+3)\big(\int_{K_{R_1}^c} \Vert x \Vert \mu(dx) + \int_{K_{R_1}^c} \Vert y \Vert \nu(dy)\big) \leq (2M_1(\calN_1)+3)(1+2T\sqrt{d})\int_{K_{R_1}^c} \Vert x \Vert \mu(dx),
    \end{split}
    \end{equation}
    where the second to last inequality is because $R_1 \geq 1$. Therefore, by combining \eqref{eq:lem:compact_approx_2:1}, \eqref{eq:lem:compact_approx_2:2}, \eqref{eq:lem:compact_approx_2:3} and \eqref{eq:lem:compact_approx_2:4}, we have for all $\sigma \in [0,1]$, 
    \begin{equation}
    \label{eq:lem:compact_approx_2:5}
    \begin{split}
    \AWD_1^{(\sigma)}(\mu,\nu) &\leq (1+2T\sqrt{d})\int_{K^{c}_{R_2}}\Vert x \Vert d(\mu\ast\calN_\sigma) +(1+2T\sqrt{d})\int_{K^{c}_{R_2}}\Vert x \Vert d(\nu\ast\calN_\sigma)\\
    & + ((3+4\sqrt{d}TR_2)^{T} - 1)(2M_1(\calN_1)+3)(1+2T\sqrt{d})\int_{K_{R_1}^c} \Vert x \Vert \mu(dx).
    \end{split}
    \end{equation}
    For the uniform integrability of the first two terms in \eqref{eq:lem:compact_approx_2:5}, notice that for all $\sigma \in [0,1]$,
    \begin{equation}
    \label{eq:lem:compact_approx_2:6}
    \begin{split}
        \int_{K^{c}_{R_2}}\Vert x \Vert d(\mu\ast\calN_\sigma) &= \E_{X\sim \mu, \varepsilon\sim \calN_1}[\Vert X+\sigma \varepsilon\Vert \mathbbm{1}_{\{X+\sigma \varepsilon \notin K_{R_2} \}}]\\
        &\leq \E_{X\sim \mu, \varepsilon\sim \calN_1}[\Vert X+\sigma \varepsilon\Vert \mathbbm{1}_{\{X\notin K_{\frac{R_2}{2}} \}}] + \E_{X\sim \mu, \varepsilon\sim \calN_1}[\Vert X+\sigma \varepsilon\Vert \mathbbm{1}_{\{\sigma \varepsilon \notin K_{\frac{R_2}{2}} \}}]\\
        &\leq \E_{X\sim \mu}[\Vert X\Vert \mathbbm{1}_{\{X\notin K_{\frac{R_2}{2}} \}}] + \E_{\varepsilon\sim\calN_1}[\Vert \sigma \varepsilon\Vert]\,\E_{X\sim \mu}[ \mathbbm{1}_{\{X\notin K_{\frac{R_2}{2}} \}}]\\
        &+ \E_{X\sim \mu}[\Vert X\Vert ]\,\E_{\varepsilon\sim \calN_1}[\mathbbm{1}_{\{\sigma \varepsilon \notin K_{\frac{R_2}{2}} \}}] + \E_{\varepsilon\sim \calN_1}[\Vert \sigma \varepsilon\Vert \mathbbm{1}_{\{\sigma \varepsilon \notin K_{\frac{R_2}{2}} \}}]\\
        &\leq  \int_{K^{c}_{\frac{R_2}{2}}}\Vert x \Vert d\mu + M_1(\calN_1)\int_{K^{c}_{\frac{R_2}{2}}} d\mu + M_1(\mu)\int_{K^{c}_{\frac{R_2}{2}}} d\calN_1 + \int_{K^{c}_{\frac{R_2}{2}}}\Vert x \Vert d\calN_1.
    \end{split}        
    \end{equation}
    where $K^{c}_{\frac{R_2}{2}} = [-\frac{R_2}{2}, \frac{R_2}{2}]^{dT}$. Similar for $\nu$, we have for all $\sigma \in [0,1]$,
    \begin{equation}
    \label{eq:lem:compact_approx_2:7}
    \begin{split}
        &\quad~\int_{K^{c}_{R_2}}\Vert x \Vert d(\nu\ast\calN_\sigma)\\
        &\leq \int_{K^{c}_{\frac{R_2}{2}}}\Vert x \Vert d\nu + M_1(\calN_1)\int_{K^{c}_{\frac{R_2}{2}}} d\nu + M_1(\nu)\int_{K^{c}_{\frac{R_2}{2}}} d\calN_1 + \int_{K^{c}_{\frac{R_2}{2}}}\Vert x \Vert d\calN_1\\
        &\leq 2T\sqrt{d}\int_{K^{c}_{\frac{R_2}{2}}}\Vert x \Vert d\mu + M_1(\calN_1)\int_{K^{c}_{\frac{R_2}{2}}} d\mu + 2T\sqrt{d}M_1(\mu)\int_{K^{c}_{\frac{R_2}{2}}} d\calN_1 + \int_{K^{c}_{\frac{R_2}{2}}}\Vert x \Vert d\calN_1,
    \end{split}
    \end{equation}
    where the last inequality is because $\Vert \phi_{R_1}(x) \Vert \leq 2T\sqrt{d}\Vert x \Vert$ for all $x \in \R^{dT}$ and $R_1 \geq R_2$. Therefore, by \eqref{eq:lem:compact_approx_2:6}, \eqref{eq:lem:compact_approx_2:7} and the integrability of $\mu$, there exists $R_2$ large enough such that  
    \begin{equation}
    \label{eq:lem:compact_approx_2:8}
        (1+2T\sqrt{d})\int_{K^{c}_{R_2}}\Vert x \Vert d(\mu\ast\calN_\sigma) \leq \epsilon\quad \text{and}\quad (1+2T\sqrt{d})\int_{K^{c}_{R_2}}\Vert x \Vert d(\nu\ast\calN_\sigma)\leq \epsilon,
    \end{equation}
    and $R_1$ large enough such that 
    \begin{equation}
    \label{eq:lem:compact_approx_2:9}
        ((3+4\sqrt{d}TR_2)^{T} - 1)(2M_1(\calN_1)+3)(1+2T\sqrt{d})\int_{K_{R_1}^c} \Vert x \Vert \mu(dx) \leq \epsilon.
    \end{equation}
    Plugging \eqref{eq:lem:compact_approx_2:8} and \eqref{eq:lem:compact_approx_2:9} into \eqref{eq:lem:compact_approx_2:5}, we obtain that $\AWD_1(\mu\ast\calN_\sigma,\nu\ast\calN_\sigma) \leq 3 \epsilon$ and by re-scaling $\epsilon$, we complete the proof of (i). For (ii), by the law of large number, we have
    \begin{equation*}
        \lim_{N\to\infty}\int_{K_{R_1}^c} \Vert x \Vert \mu^N(dx) = \int_{K_{R_1}^c} \Vert x \Vert \mu(dx),\quad  \lim_{N\to\infty}\int_{K^{c}_{\frac{R_2}{2}}} \Vert x \Vert \mu^N(dx) = \int_{K^{c}_{\frac{R_2}{2}}} \Vert x \Vert \mu(dx), 
    \end{equation*}
    and $\lim_{N\to\infty}\int_{K^{c}_{\frac{R_2}{2}}}\mu^N(dx) = \int_{K^{c}_{\frac{R_2}{2}}}\mu(dx).$ Thus by replacing $\mu, \nu$ by $\mu^N, \nu^N$ in the proof of (i) and applying the law of large number, we can similarly prove (ii).
\end{proof}

\printbibliography
\end{document}